\newtheorem{thm}{Theorem}[section]
\newtheorem{defin}[thm]{Definition}
\newtheorem{cor}[thm]{Corollary}
\newtheorem{lem}[thm]{Lemma}
\newtheorem{lemma-definition}[thm]{Lemma-Definition}
\newtheorem{proposition}[thm]{Proposition}
\theoremstyle{definition}
\theoremstyle{remark}
\numberwithin{equation}{section}
\def\Q{\mathbb{Q}}
\def\Z{\mathbb{Z}}
\def\N{\mathbb{N}}
\def\as#1{\renewcommand\arraystretch{#1}}
\def\dep{\operatorname{depth}}
\def\ff#1{\mathbb{F}_{#1}}
\def\gal{\operatorname{Gal}}
\def\lra{\longrightarrow}
\def\m{{\mathfrak m}}
\def\md#1{\ \mbox{\rm(mod }{#1})}
\def\oo{{\mathcal O}}
\def\qpb{\overline{K}}
\def\rd{\operatorname{red}}
\def\t{\theta}
\def\tq{\,\,|\,\,}
\def\ty{\mathbf{t}}
\def\zpx{\oo[x]}
\begin{document}
\title[Okutsu invariants and Newton polygons]{Okutsu invariants and Newton polygons}
\author[Gu\`ardia]{Jordi Gu\`ardia}
\address{Departament de Matem\`atica Aplicada IV, 
Escola Polit\`ecnica Superior d'Enginyera de Vilanova i la Geltr\'u, Av. V\'\i ctor Balaguer s/n. E-08800 Vilanova i la Geltr\'u, Catalonia}
\email{guardia@ma4.upc.edu}

\author[Montes]{\hbox{Jes\'us Montes}}
\address{Departament de Ci\`encies Econ\`omiques i Socials,
Facultat de Ci\`encies Socials,
Universitat Abat Oliba CEU,
Bellesguard 30, E-08022 Barce\-lona, Catalonia, Spain}
\email{montes3@uao.es}

\author[Nart]{\hbox{Enric Nart}}
\address{Departament de Matem\`{a}tiques,
         Universitat Aut\`{o}noma de Barcelo\-na,
         Edifici C, E-08193 Bellaterra, Barcelona, Catalonia, Spain}
\email{nart@mat.uab.cat}
\thanks{Partially supported by MTM2009-13060-C02-02 and MTM2009-10359 from the Spanish MEC}
\date{}
\keywords{local factorization, local field, Montes' algorithm, Montes approximation, Newton polygon, Okutsu frame}

\makeatletter
\@namedef{subjclassname@2010}{%
  \textup{2010} Mathematics Subject Classification}

\subjclass[2010]{Primary 11S15; Secondary 11Y05, 11Y40}

\begin{abstract}
Let $K$ be a local field of characteristic zero, $\oo$ its ring of integers, and $F(x)\in\zpx$ a monic irreducible polynomial. K. Okutsu attached to $F(x)$ certain  \emph{primitive divisor polynomials} $F_1(x),\dots, F_r(x)\in\zpx$, that are specially close to $F(x)$ with respect to their degree \cite{Ok}. In this paper we characterize the Okutsu families $F_1,\dots, F_r$, in terms of Newton polygons of higher order, and we derive some applications: closed formulas for certain Okutsu invariants, the discovery of new Okutsu invariants, or the construction of \emph{Montes approximations} to $F(x)$; these are monic irreducible polynomials sufficiently close to $F(x)$ to share all its Okutsu invariants. This perspective widens the scope of applications of Montes algorithm \cite{HN}, \cite{GMN}, which can be reinterpreted as a tool to compute the Okutsu polynomials and a Montes approxi\-mation, for each irreducible factor of a monic separable polynomial $f(x)\in\zpx$.
\end{abstract}

\subjclass[2010]{Primary 11S15; Secondary 11Y05, 11Y40}

\keywords{local factorization, local field, Montes algorithm, Montes approximation, Newton polygon, Okutsu frame}

\maketitle

\section{Introduction}
Let $K$ be a local field of characteristic zero, $\oo$ its ring of integers, $\m$ the maximal ideal of $\oo$, and $v\colon \qpb^*\lra \Q$, the canonical extension of the discrete valuation of $K$ to an algebraic closure of $K$.
Let $F(x)\in\zpx$ be a monic irreducible polynomial, $\t\in\qpb$ a root of $F(x)$, and $L=K(\t)$. K. Okutsu attached to $F(x)$ a family of monic irreducible 
polynomials, $F_1(x),\dots, F_r(x)\in\zpx$, 
called the \emph{primitive divisor polynomials} of $F(x)$
\cite{Ok}. Take $F_0(x)=1$. For each $1\le i\le r$, $\deg(F_i)$ is minimal among all monic irreducible polynomials $g(x)\in\zpx$ satisfying $v(g(\t))>v(F_{i-1}(\t))$, and $v(F_i(\t))$ is maximal among all polynomials having this mi\-nimal degree. 
Let us call the chain $[F_1,\dots,F_r]$ an \emph{Okutsu frame} of $F(x)$, and let $K_1,\dots, K_r$ be the respective extensions of $K$ 
determined by these polynomials. 
The polynomials $F_1,\dots, F_r$ are not uniquely determined, but many of their invariants, like 
the residual degrees $f(K_i/K)$ and the ramification indices $e(K_i/K)$, depend only on $F(x)$, and they are linked to some arithmetical invariants of the extension $L/K$ and its subextensions (Corollaries \ref{invariants} and \ref{tameincluded}).

In this paper we find a natural characterization of Okutsu frames in terms of Newton polygons of higher order \cite{HN}. More precisely, a family $[F_1,\dots, F_r]$ of monic irreducible polynomials in $\zpx$ is an Okutsu frame of $F(x)$ if and only if (Theorems \ref{niam} and \ref{main}):

\begin{enumerate}
 \item $F_1(x)$ is irreducible modulo $\m$ and divides $F(x)$ modulo $\m$.
\item $\deg F_1(x)<\cdots<\deg F_r(x)<\deg F(x)$.
\item For each $1\le i< r$, the Newton polygons of $i$-th order (with respect to $[F_1,\dots,F_i]$), $N_i(F)$ and $N_i(F_{i+1})$ are one-sided and they have the same negative slope.
\item For each $1\le i< r$, the residual polynomial of $i$-th order, $R_i(F_{i+1})$ is irreducible and, up to a multiplicative constant, $R_i(F)$ is a power of $R_i(F_{i+1})$. 
\item $N_r(F)$ is one-sided of negative slope and $R_r(F)$ is irreducible. 
\end{enumerate}

As a consequence, we get closed formulas for the invariants $v(F_i(\t))$ in terms of combinatorial data attached to these Newton polygons (Corollary \ref{vFi}), and we find new Okutsu invariants of $F(x)$ (Corollary \ref{newinvariants}). Newton polygons can be used too to construct \emph{Montes approximations} to $F(x)$; these are monic irreducible polynomials sufficiently close to $F(x)$ to share all its Okutsu invariants (Lemma \ref{sameframes}). In the tamely ramified case any Montes approximation to $F(x)$ generates the same extension of $K$ (Proposition \ref{sameextension}). 

Moreover, this characterization of the Okutsu frames provides the following reinterpretation of Montes algorithm \cite{HN}, \cite{GMN}: at the input of a monic se\-parable polynomial $f(x)\in\zpx$, the algorithm computes an Okutsu frame and a Montes approximation to each irreducible factor of $f(x)$. This widens the scope of applications of this algoritm, as a tool to compute the arithmetic information about the irreducible factors of $f(x)$, contained in their Okutsu invariants. For instance,  this perspective yields a measure of the precision of a Montes approximation (Lemma \ref{firstappr}), that makes it possible to slightly modify the algorithm in order to find approximations to the irreducible factors with prescribed precision (Section \ref{factorization}).

In another direction, the results of this paper open the door to a new 
cons\-truction of the prime ideals of the number field $M$ determined by a monic irreducible polynomial $f(x)$ with integer coefficients. For any prime number $p$, Montes algorithm computes Okutsu frames and Montes approxi\-mations to the different $p$-adic irreducible factors of $f(x)$. This can be interpreted as a  canonical paramete\-rization of the prime ideals of $M$ dividing $p$, in terms of Okutsu invariants that depend only on the polynomial $f(x)$. This parameterization is faithful enough to enable one to carry out the basic
tasks concerning ideals in number fields, without the necessity of
neither factorizing the discriminant of $f(x)$ nor constructing an integral basis of
the ring of integers of the number field. We hope to develop these ideas in a forthcoming paper \cite{GMN3}.

In the same vein, the divisors of a curve $C$ over  a finite field can be also parameterized in terms of Okutsu invariants that depend only on the defining equation of the curve. This enables one to compute the divisor of a function, or to construct a function with zeros and poles of a prescribed order at a finite number of places, avoiding the computation of integral bases of subrings of the function field of $C$. 

\section{Okutsu frames}
In this section we review and generalize Okutsu's results \cite{Ok}. The aim of the generalization is to facilitate the application of these results to the situation of section \ref{montes}.

Let $K$ be a local field of characteristic zero, $\oo$ its ring of integers, $\m$ the maximal ideal of $\oo$, $\pi\in\m$ a generator of $\m$, and $\ff{}$ the residue field. Let $\qpb$ be a fixed algebraic closure of $K$ and $v\colon \qpb^*\lra \Q$, the canonical extension of the discrete valuation of $K$.
We extend the valuation $v$ to the ring $\zpx$ in a natural way:
\begin{equation}\label{v1}
v\colon \zpx\lra \Z_{\ge0}\cup\{\infty\},\quad v(b_0+\cdots+b_rx^r):=\min\{v(b_j),\,0\le j\le r\}.
\end{equation}

For any finite extension $M$ of $K$ we denote by $\oo_M$ the  
ring of integers of $M$, and for any $\eta\in\qpb$ we denote  $\deg_M\eta:=[M(\eta)\colon M]$.

We fix throughout the section a monic irreducible polynomial $F(x)\in\zpx$ of degree $n$, a root $\t\in\qpb$ of $F(x)$, and the field $L=K(\t)$.

\subsection{Okutsu invariants}
Consider the two sequences, 
$$
\as{1.2}
\begin{array}{c}
m_0:=1\le m_1<m_2<\cdots<m_r<m_{r+1}=n,\\ 
\mu_0:=0<\mu_1<\mu_2<\cdots<\mu_r<\mu_{r+1}=\infty,
\end{array}
$$
the first one of positive integers, the second one of non-negative rational numbers (and infinity), defined in the following recurrent way for $i\ge 1$:
$$\as{1.2}
\begin{array}{l}
m_i:=\min\{\deg_K\eta\tq \eta\in\qpb,\ v(\t-\eta)>\mu_{i-1}\},\\
\mu_i:=\max\{v(\t-\eta)\tq \eta\in\qpb,\ \deg_K\eta=m_i\}.
\end{array}
$$
These numbers do not depend on the choice of $\t$ among the roots of $F(x)$. 

\begin{defin}\label{frame}
For $1\le i\le r$, choose $\alpha_i\in\qpb$ such that $\deg_K\alpha_i=m_i$, $v(\t-\alpha_i)=\mu_i$. Let  $F_i(x)\in\zpx$ be the minimal polynomial of $\alpha_i$, and  denote $K_i=K(\alpha_i)$.
The chain $[F_1,\dots,F_r]$ of monic irreducible polynomials in $\zpx$ is called an \emph{Okutsu frame} of $F(x)$. The length $r$ of an Okutsu frame is called the \emph{depth} of $F(x)$, and it will be denoted $\dep(F)$. 
\end{defin}

The polynomials $F_1,\dots,F_r$ that constitute an Okutsu frame of $F(x)$ are not uniquely determined. However, they are canonical in some sense, since many of their invariants (like the 
sequences $m_1<\cdots<m_r$ and $\mu_1<\cdots< \mu_r$) depend only on $F(x)$. More generally, an invariant of the family $F_1,\dots,F_r,F$ will be called an \emph{Okutsu invariant} if it does not depend on the choice of the Okutsu frame $[F_1,\dots,F_r]$. Thus, although the frame might be involved in their computation, they are actually invariants of $F(x)$. Corollaries \ref{invariants}, \ref{more} and \ref{newinvariants} present more Okutsu invariants of $F(x)$. 

\begin{lem}\label{v0}
For any $h(x)\in\zpx$, we have $v(h(\t))=0$ if and only if $h(x)$ is relatively prime to $F(x)$ modulo $\m$.
\end{lem}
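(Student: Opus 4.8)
The plan is to prove both implications of the equivalence by analyzing the valuation $v(h(\t))$, where $\t$ is the fixed root of $F(x)$.

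First I would set up the key reduction. Since $F(x)$ is irreducible over $K$, the quotient ring $\oo[x]/(F(x))$ is naturally isomorphic to $\oo_L$ intersected with $K[\t] = \oo[\t]$... more precisely, $\oo[x]/(F(x)) \cong \oo[\t]$ as $\oo$-algebras, via $x \mapsto \t$. The valuation $v$ on $\qpb^*$ restricts to the (suitably normalized) valuation on $L$, and $\oo[\t] \subseteq \oo_L$. The crucial observation is that reduction modulo $\m$ gives a ring homomorphism $\oo[x] \lra \ff{}[x]$, and since $F(x)$ is monic, this descends to $\oo[x]/(F(x)) \lra \ff{}[x]/(\overline{F}(x))$, where $\overline{F}$ denotes the reduction of $F$.

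For the forward direction, suppose $v(h(\t)) = 0$. This means $h(\t) \in \oo[\t]$ is a unit in $\oo_L$, hence its image $\overline{h(\t)}$ in the residue field of $L$ is nonzero. I would argue that the residue field of $L$ is a quotient of $\ff{}[x]/(\overline{F}(x))$ (it is the residue field of one of the prime factors), so the image of $\overline{h}$ in $\ff{}[x]/(\overline{F}(x))$ must be nonzero modulo the corresponding maximal ideal; in any case $\overline{h}(\overline{\t}) \ne 0$ where $\overline{\t}$ is the image of $\t$ in a residue field of $\ff{}[x]/(\overline{F}(x))$. Since $\overline{\t}$ generates such a residue field over $\ff{}$, its minimal polynomial over $\ff{}$ is an irreducible factor of $\overline{F}(x)$; that this factor does not divide $\overline{h}(x)$ is exactly the statement that $\gcd(\overline{h},\overline{F}) \ne \overline{F}$... but we need the stronger conclusion that they are coprime. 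Here I would instead run the argument for all roots simultaneously: $v(h(\t)) = 0$ for the valuation attached to $L$ means $v(h(\eta)) = 0$ for \emph{every} conjugate root $\eta$ of $F$ (since the $\eta$ are Galois-conjugate and $v$ is the unique extension to $\qpb$). Hence $\overline{h}(\overline{\eta}) \ne 0$ for every root $\overline{\eta}$ of $\overline{F}$ in $\overline{\ff{}}$, which says precisely that $\overline{h}$ and $\overline{F}$ have no common root, i.e. $\gcd(\overline{h}, \overline{F}) = 1$ in $\ff{}[x]$.

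For the converse, suppose $\overline{h}$ and $\overline{F}$ are coprime in $\ff{}[x]$; then there exist $a(x), b(x) \in \ff{}[x]$ with $\overline{h}\,a + \overline{F}\,b = 1$, so lifting to $\oo[x]$ we get $h(x)A(x) + F(x)B(x) \equiv 1 \pmod{\m}$ for some $A, B \in \oo[x]$. Evaluating at $\t$ and using $F(\t) = 0$ yields $h(\t)A(\t) \equiv 1 \pmod{\m\oo[\t]}$, which forces $v(h(\t)) \le v(h(\t)A(\t)) = 0$... wait, $v(h(\t)A(\t) - 1) > 0$ gives $v(h(\t)A(\t)) = 0$, hence $v(h(\t)) = -v(A(\t)) + 0 \le 0$, and since $h(\t) \in \oo_L$ also $v(h(\t)) \ge 0$; thus $v(h(\t)) = 0$. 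The main obstacle, modest as it is, is the first direction: making precise the relationship between $v$ on $L$, the reduction of $\oo[\t]$, and the roots of $\overline{F}$, so that "$v(h(\t)) = 0$" genuinely translates into coprimality of the reductions rather than merely non-divisibility; the clean way around it is the Galois-conjugate argument above, which handles all irreducible factors of $\overline{F}$ at once. I expect the whole proof to be short once this translation is in place.
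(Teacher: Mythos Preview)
Your proof is correct, and the converse direction (coprimality modulo $\m$ implies $v(h(\t))=0$) is essentially the same B\'ezout argument the paper gives.

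For the forward direction, however, you take a genuinely different route from the paper. The paper argues by contrapositive and invokes Hensel's lemma: since $F(x)$ is irreducible over the complete ring $\oo$, its reduction is a power of a single irreducible polynomial, $\overline{F}(x)=F_1(x)^\ell$ in $\ff{}[x]$. Thus ``not coprime modulo $\m$'' simply means $F_1\mid \overline{h}$, and since $v(F_1(\t))>0$ one gets $v(h(\t))>0$ immediately. This is exactly the structural fact you were missing when you worried that ``$\overline{h}(\overline{\t})\ne 0$'' might only rule out one irreducible factor of $\overline{F}$: in fact there is only one, and your first attempt would already have finished the job had you noticed it.

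Your Galois-conjugate workaround is valid: writing $F(x)=\prod_i(x-\eta_i)$ over the ring of integers of a splitting field and reducing gives $\overline{F}(x)=\prod_i(x-\overline{\eta_i})$, so every root of $\overline{F}$ really is the reduction of some $\eta_i$, and $v(h(\eta_i))=0$ for all $i$ forces $\gcd(\overline{h},\overline{F})=1$. This avoids Hensel's lemma at the cost of passing to a splitting field and a compatible reduction map; the paper's argument is shorter and also makes explicit the prime-power shape of $\overline{F}$, a fact used again immediately afterwards in Corollary~\ref{phi1}.
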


\begin{proof}
If there exist $a(x),b(x)\in\zpx$ such that $a(x)F(x)+b(x)h(x)\in 1+\m[x]$, then clearly $v(h(\t))=0$. 

By Hensel lemma, $F(x)\equiv F_1(x)^\ell\md{\m}$, for some $F_1(x)\in\zpx$ which is irreducible modulo $\m$. Clearly, $v(F_1(\t))>0$, and if $F_1(x)$ divides $h(x)$ modulo $\m$, we have $v(h(\t))>0$ too.
\end{proof}

\begin{cor}\label{phi1}\mbox{\null}
\begin{enumerate}
\item[(i)] A monic irreducible polynomial $F(x)\in\zpx$ has depth zero if and only if $F(x)$ is irreducible modulo $\m$. In this case, the Okutsu frames of $F(x)$ are all empty.
\item[(ii)] If $[F_1,\dots,F_r]$ is an Okutsu frame of $F(x)$, then $F_1(x)$ is irreducible modulo $\m$, $m_1|n$, and $F(X)\equiv F_1(x)^{n/m_1}\md{\m}$.\qed
\end{enumerate}
\end{cor}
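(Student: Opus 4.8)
The plan is to prove Corollary~\ref{phi1} as a direct consequence of Lemma~\ref{v0} together with the definitions of the sequences $m_i,\mu_i$ and the Okutsu frame.

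For part (i), suppose $F(x)$ has depth zero, so there are no pairs $(m_i,\mu_i)$ beyond $m_0=1$, $\mu_0=0$. By definition this means that $m_1=\min\{\deg_K\eta\tq v(\t-\eta)>0\}$ either does not occur with $m_1<n$, i.e. the only algebraic numbers $\eta$ with $v(\t-\eta)>0$ have degree $n$ over $K$. In particular no $\eta$ of degree $1$ over $K$, i.e. no element of $K$ (more precisely of $\oo$, since $v(\t-\eta)>0$ forces $\eta\in\oo$), is congruent to $\t$. I would translate this into a statement about $F(x)\bmod\m$: by the Hensel-lemma factorization used in Lemma~\ref{v0}, $F(x)\equiv F_1(x)^\ell\md\m$ with $F_1$ irreducible mod $\m$; if $\ell>1$ or $\deg F_1<n$ then one can produce an $\eta$ with $\deg_K\eta<n$ and $v(\t-\eta)>0$ (take a suitable root of a lift of $F_1$), contradicting depth zero; hence $\ell=1$ and $F\equiv F_1\md\m$ is irreducible mod $\m$. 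Conversely, if $F$ is irreducible mod $\m$, then for any $\eta$ with $\deg_K\eta<n$, the minimal polynomial $h$ of $\eta$ has degree $<n$, so $h$ is coprime to $F$ mod $\m$ (as $F$ is irreducible of larger degree), whence $v(h(\t))=0$ by Lemma~\ref{v0}; since $v(\t-\eta)\le \tfrac1{\deg_K\eta}v(h(\t))=0$... more carefully, $N_{K(\eta)/K}(\t-\eta)=\pm h(\t)$ up to the right normalization, so $v(\t-\eta)=0$, meaning no lower-degree $\eta$ approximates $\t$, i.e. $r=0$. That the frames are then empty is immediate from Definition~\ref{frame}.

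For part (ii), assume $r\ge 1$. Then $\alpha_1$ has $\deg_K\alpha_1=m_1$ and $v(\t-\alpha_1)=\mu_1>0$, so $v(F_1(\t))>0$, where $F_1$ is the minimal polynomial of $\alpha_1$. By Lemma~\ref{v0}, $F_1(x)$ is \emph{not} coprime to $F(x)$ mod $\m$; writing $F\equiv G_1^\ell\md\m$ with $G_1$ irreducible mod $\m$ (Hensel), the only monic factors of $F$ mod $\m$ are powers of $G_1$, so $F_1\bmod\m$ must be divisible by $G_1$. I then need $F_1$ to be irreducible mod $\m$: this follows because $m_1=\deg F_1$ is \emph{minimal} among degrees of monic irreducible polynomials $g$ with $v(g(\t))>v(F_0(\t))=0$; if $F_1\bmod\m$ had $G_1$ as a proper factor, a lift $\tilde G_1$ of $G_1$ would have smaller degree and $v(\tilde G_1(\t))>0$, contradicting minimality — so $F_1\equiv c\,G_1\md\m$ for a unit $c$, hence (taking $F_1$ monic) $F_1\equiv G_1\md\m$ is irreducible mod $\m$. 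Finally, comparing degrees in $F\equiv G_1^\ell\equiv F_1^\ell\md\m$ gives $m_1\ell=n$, i.e. $m_1\mid n$ and $\ell=n/m_1$, which is the asserted congruence $F(x)\equiv F_1(x)^{n/m_1}\md\m$.

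The main obstacle is the precise bookkeeping relating $v(\t-\eta)$ to $v(h(\t))$ for $h$ the minimal polynomial of $\eta$, and making sure the ``produce a lower-degree approximant from a mod-$\m$ factor'' step is correct — one must Hensel-lift an irreducible mod-$\m$ factor of the relevant polynomial and check that one of its roots $\eta$ genuinely satisfies $v(\t-\eta)>0$ while having the expected degree over $K$. Once this dictionary is in place, everything else is a direct unwinding of Definition~\ref{frame}, Lemma~\ref{v0}, and the Hensel factorization $F\equiv F_1^\ell\md\m$.
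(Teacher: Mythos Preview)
Your argument is correct and is exactly the intended one: the paper gives no proof beyond the \qed, since everything follows from Lemma~\ref{v0} (and the Hensel factorization $F\equiv G^\ell\md\m$ established in its proof) together with the definitions of $m_i,\mu_i$. The only cosmetic fix needed is to replace the norm identity by the cleaner observation $h(\t)=\prod_j(\t-\eta_j)$ with each $v(\t-\eta_j)\ge 0$ (as $\t,\eta_j$ are integral), so $v(h(\t))=0$ forces every $v(\t-\eta_j)=0$, and conversely $v(\tilde G(\t))>0$ forces some $v(\t-\beta_j)>0$; this is precisely the ``dictionary'' you flagged, and it closes all the remaining gaps.
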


\begin{lem}\label{recurrence}
Let $[F_1,\dots,F_r]$ be an Okutsu frame of $F(x)$. For some index, $1\le i\le r+1$, let $\alpha\in\qpb$ be an algebraic integer satisfying: $\deg_K \alpha=m_i$, $v(\t-\alpha)>\mu_{i-1}$, and let $G(x)$ be the minimal polynomial of $\alpha$ over $K$.
Then, $[F_1,\dots,F_{i-1}]$ is an Okutsu frame of $G(x)$. 
\end{lem}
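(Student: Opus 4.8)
The plan is to verify the two defining properties of an Okutsu frame for $G(x)$ directly from the definitions, namely that the two sequences $m_1<\cdots<m_{i-1}$ and $\mu_1<\cdots<\mu_{i-1}$ attached to $G$ coincide with those attached to $F$, and that the polynomials $F_1,\dots,F_{i-1}$ are legitimate choices of the corresponding minimal polynomials for $G$. Denote by $m'_j$, $\mu'_j$ the sequences attached to $\alpha$ (playing the role that $\t$ plays for $F$). The key observation is that $v(\t-\alpha)>\mu_{i-1}$ forces $\t$ and $\alpha$ to be "very close" compared to the scale of the first $i-1$ steps, so that for any $\eta\in\qpb$ with $\deg_K\eta$ small, the ultrametric inequality gives $v(\t-\eta)=v(\alpha-\eta)$ whenever $\min(v(\t-\eta),v(\alpha-\eta))\le\mu_{i-1}$.

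First I would make that ultrametric comparison precise: if $v(\alpha-\eta)\le\mu_{i-1}<v(\t-\alpha)$, then $v(\t-\eta)=\min(v(\t-\alpha),v(\alpha-\eta))=v(\alpha-\eta)$, and symmetrically. Using this, I would argue by induction on $j$ from $1$ to $i-1$ that $m'_j=m_j$ and $\mu'_j=\mu_j$. For the inductive step, suppose $m'_l=m_l$, $\mu'_l=\mu_l$ for $l<j$ (so in particular $\mu'_{j-1}=\mu_{j-1}<\mu_{i-1}$). To compute $m'_j=\min\{\deg_K\eta\tq v(\alpha-\eta)>\mu'_{j-1}\}$: any $\eta$ witnessing this for $\alpha$ has $v(\alpha-\eta)>\mu_{j-1}$; if moreover $v(\alpha-\eta)\le\mu_{i-1}$ then $v(\t-\eta)=v(\alpha-\eta)>\mu_{j-1}$, so $\eta$ witnesses the same inequality for $\t$; and if $v(\alpha-\eta)>\mu_{i-1}\ge\mu_j$ — I should check $j\le i-1$ means $\mu_j\le\mu_{i-1}$, which holds — then a fortiori $v(\alpha-\eta)>\mu_{j-1}$ and one still gets $\deg_K\eta\ge m'_j$; conversely $\alpha_j$ itself (with $v(\t-\alpha_j)=\mu_j\le\mu_{i-1}<v(\t-\alpha)$, hence $v(\alpha-\alpha_j)=\mu_j>\mu_{j-1}$) witnesses the inequality for $\alpha$ and has degree $m_j$. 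This pins down $m'_j=m_j$. A parallel argument with the "max" in the definition of $\mu_j$, using that $\alpha_j$ achieves $v(\alpha-\alpha_j)=\mu_j$ and that no $\eta$ of degree $m_j$ can do better for $\alpha$ (else, transferring back via the ultrametric identity, it would do better for $\t$), gives $\mu'_j=\mu_j$.

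Having matched the invariants, it remains to see that $[F_1,\dots,F_{i-1}]$ is an admissible frame of $G$: we need $\deg_K\alpha_j=m'_j=m_j$ (done) and $v(\alpha-\alpha_j)=\mu'_j=\mu_j$, which I verified above precisely because $\mu_j\le\mu_{i-1}<v(\t-\alpha)$. So the same $\alpha_1,\dots,\alpha_{i-1}$ — and hence the same minimal polynomials $F_1,\dots,F_{i-1}$ — constitute an Okutsu frame of $G(x)$ in the sense of Definition \ref{frame}. One should also note $i-1$ is the full depth of $G$: the definition produces $m'_{i}=\min\{\deg_K\eta\tq v(\alpha-\eta)>\mu'_{i-1}=\mu_{i-1}\}$, and since $\alpha$ itself has degree $m_i$ with $v(\alpha-\alpha)=\infty>\mu_{i-1}$, this minimum is at most $m_i=\deg_K\alpha=\deg G$, i.e. the frame of $G$ stops exactly at index $i-1$.

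The main obstacle I anticipate is the bookkeeping in the case analysis of the inductive step: one must handle separately the $\eta$'s that are "close to $\t$ and $\alpha$ at the level of step $j$" versus those that stray beyond level $i-1$, and be careful that the chain of inequalities $\mu_{j-1}<\mu_j\le\mu_{i-1}<v(\t-\alpha)$ is used in the right direction each time (in particular the edge case $j=i-1$, where $\mu_j=\mu_{i-1}$, needs the strict inequality $\mu_{i-1}<v(\t-\alpha)$ to survive). Everything else is a routine application of the ultrametric inequality and the minimality/maximality built into the definition of the two sequences.
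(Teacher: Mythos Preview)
Your approach is essentially the paper's: use the ultrametric inequality together with $v(\t-\alpha)>\mu_{i-1}$ to transfer the defining extremality of $m_j,\mu_j$ from $\t$ to $\alpha$ for each $j<i$, noting that the $\alpha_j$'s themselves witness $v(\alpha-\alpha_j)=\mu_j$. The paper's version is more economical---no induction or case split is needed, since $v(\alpha-\eta)>\mu_{j-1}$ and $v(\t-\alpha)>\mu_{i-1}\ge\mu_{j-1}$ already give $v(\t-\eta)>\mu_{j-1}$ directly---but the content is the same.

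One small gap at the end: from $m'_i\le m_i$ alone you cannot conclude that the frame of $G$ stops at index $i-1$; if $m'_i<m_i=\deg G$ the sequence would continue. You also need $m'_i\ge m_i$, which follows by the same ultrametric trick you have been using: if $v(\alpha-\eta)>\mu_{i-1}$ then, since also $v(\t-\alpha)>\mu_{i-1}$, one gets $v(\t-\eta)>\mu_{i-1}$, whence $\deg_K\eta\ge m_i$ by the definition of $m_i$ for $F$.
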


\begin{proof}
Since the sequence $\mu_j=v(\t-\alpha_j)$ is strictly increasing, 
$$
v(\alpha-\alpha_j)=\min\{v(\alpha-\t),v(\t-\alpha_j)\}=\mu_j,\quad \forall\, 1\le j<i.
$$ 
Now, for all $1\le j<i$, and all $\eta\in\qpb$ we have:
$$v(\alpha-\eta)>\mu_{j-1}\,\Longrightarrow\,
v(\t-\eta)>\mu_{j-1}\,\Longrightarrow\, 
\deg_K\eta \ge m_j,
$$$$
\deg_K\eta =m_j\,\Longrightarrow\, 
v(\t-\eta)\le \mu_j\,\Longrightarrow\,
v(\alpha-\eta)\le \mu_j.
$$
\end{proof}

\begin{cor}\label{recFi}
Let $[F_1,\dots,F_r]$ be an Okutsu frame of $F(x)$. Then, for all $1\le i\le r$, $[F_1,\dots,F_{i-1}]$ is an Okutsu frame of $F_i(x)$. In parti\-cular, $\dep(F_i)=i-1$.\qed
\end{cor}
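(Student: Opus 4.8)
The plan is to apply Lemma~\ref{recurrence} to each $F_i$ in turn, using the root $\alpha_i$ chosen in Definition~\ref{frame}. First I would fix an index $1\le i\le r$ and set $\alpha=\alpha_i$, so that $\alpha$ is an algebraic integer with $\deg_K\alpha=m_i$ and $v(\t-\alpha_i)=\mu_i>\mu_{i-1}$. The minimal polynomial $G(x)$ of this $\alpha$ is precisely $F_i(x)$. Hence Lemma~\ref{recurrence} (applied with the given index $i$) tells us that $[F_1,\dots,F_{i-1}]$ is an Okutsu frame of $F_i(x)$, which is the first assertion.

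For the depth statement, once we know $[F_1,\dots,F_{i-1}]$ is \emph{an} Okutsu frame of $F_i$, its length $i-1$ is by Definition~\ref{frame} exactly $\dep(F_i)$; the depth is well defined because the sequences $m_1<\cdots<m_r$ and $\mu_1<\cdots<\mu_r$ attached to $F_i$ are intrinsic to $F_i$ (they do not depend on the choice of frame), so every Okutsu frame of $F_i$ has the same length. Thus $\dep(F_i)=i-1$ follows immediately.

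I do not expect any real obstacle here: the statement is essentially a repackaging of Lemma~\ref{recurrence} in the special case $G=F_i$, $\alpha=\alpha_i$. The only point that requires a line of care is checking that the hypotheses of Lemma~\ref{recurrence} are genuinely met for each $i$ — namely that $\alpha_i$ is an algebraic integer (which holds since $F(x)\in\zpx$ forces the roots of its divisor polynomials to be integral, or can be arranged by the choice in Definition~\ref{frame}), and that the strict inequality $\mu_i>\mu_{i-1}$ holds, which is guaranteed by the strict monotonicity of the sequence $\mu_0<\mu_1<\cdots<\mu_r$ recorded at the start of the subsection. With those verifications in place, the corollary is a direct consequence.
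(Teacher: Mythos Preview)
Your proposal is correct and matches the paper's approach: the corollary carries a \qed\ with no displayed proof, meaning it is taken as an immediate specialization of Lemma~\ref{recurrence} to $\alpha=\alpha_i$, exactly as you describe. The integrality of $\alpha_i$ is already built into Definition~\ref{frame} (since $F_i(x)\in\zpx$ is monic), so no extra argument is needed there.
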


\begin{lem}\label{taylor}
For some $1\le i\le r+1$, let $\alpha,\eta\in\qpb$ be algebraic integers satisfying: 
$$v(\t-\alpha)>\mu_{i-1},\quad v(\t-\eta)>\mu_{i-1}.
$$
Then, for any non-zero polynomial $g(x)\in K[x]$ of degree less than $m_i$, we have $v(g(\eta)-g(\alpha))>v(g(\alpha))$. 

In particular, if $\deg_K\alpha=m_i$, then $e(K(\alpha)/K)$ divides  $e(K(\eta)/K)$. 

\end{lem}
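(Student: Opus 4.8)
The plan is to prove the two assertions of Lemma~\ref{taylor} separately, deriving the second from the first.

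\textbf{First assertion.} Write $g(x)=\sum_{j}c_j(x-\alpha)^j$ by Taylor expansion around $\alpha$, so that $g(\eta)-g(\alpha)=\sum_{j\ge1}c_j(\eta-\alpha)^j$. Since $\alpha$ and $\eta$ are both close to $\t$, namely $v(\t-\alpha)>\mu_{i-1}$ and $v(\t-\eta)>\mu_{i-1}$, the ultrametric inequality gives $v(\eta-\alpha)\ge\min\{v(\eta-\t),v(\t-\alpha)\}>\mu_{i-1}$. Thus it suffices to show $v(c_j(\eta-\alpha)^j)>v(g(\alpha))$ for every $j\ge1$; equivalently, $v(c_j)+j\,v(\eta-\alpha)>v(g(\alpha))$. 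The coefficient $c_j$ is, up to a factorial that is a unit in characteristic zero, the value $g^{(j)}(\alpha)/j!$; the derivative $g^{(j)}$ is a polynomial over $K$ of degree $\deg g - j < m_i - j$, so its roots all have $K$-degree strictly less than $m_i$. By the definition of $m_i$ (minimality of the $K$-degree among elements within distance $>\mu_{i-1}$ of $\t$), any root $\beta$ of $g^{(j)}$ satisfies $v(\t-\beta)\le\mu_{i-1}$, hence $v(\alpha-\beta)=\min\{v(\alpha-\t),v(\t-\beta)\}=v(\t-\beta)\le\mu_{i-1}<v(\eta-\alpha)$. Writing $g^{(j)}(x)=a\prod_\beta(x-\beta)$ and comparing $v(g^{(j)}(\alpha))=v(a)+\sum_\beta v(\alpha-\beta)$ with what one gets after perturbing $\alpha$, one obtains a gain of at least $v(\eta-\alpha)-\mu_{i-1}>0$ in the $j$-th Taylor term relative to the lower-order terms; a clean way to package this is to prove by descending induction on $j$ that $v(c_j(\eta-\alpha)^j)\ge v(c_0)+(v(\eta-\alpha)-\mu_{i-1})$ for $j\ge1$, using that the coefficients $c_j$ are themselves, via further Taylor expansion, governed by values of $g$ at points within $\mu_{i-1}$ of $\alpha$. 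Since $v(g(\alpha))=v(c_0)$ (the $j=0$ term strictly dominates all others), this yields $v(g(\eta)-g(\alpha))>v(g(\alpha))$.

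\textbf{Second assertion.} Now suppose $\deg_K\alpha=m_i$, so $G:=$ minimal polynomial of $\alpha$ has degree $m_i$. Apply the first assertion not to $g$ arbitrary but to the derivative $G'(x)$, which has degree $m_i-1<m_i$: we get $v(G'(\eta)-G'(\alpha))>v(G'(\alpha))$, hence $v(G'(\eta))=v(G'(\alpha))$. More to the point, for any $g\in K[x]$ of degree $<m_i$ we now know $v(g(\eta))=v(g(\alpha))$, because $v(g(\eta)-g(\alpha))>v(g(\alpha))$ forces equality of valuations. The ramification index $e(K(\alpha)/K)$ equals the least common denominator of $\{v(g(\alpha))\colon g\in K[x],\ \deg g<m_i\}$ — indeed, $1,\alpha,\dots,\alpha^{m_i-1}$ span $K(\alpha)$ over $K$, and the value group $v(K(\alpha)^*)$ is generated by the valuations of such elements once one also throws in elements of $K^*$, which contribute only integers. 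Since these valuations $v(g(\alpha))=v(g(\eta))$ all lie in $v(K(\eta)^*)=\tfrac1{e(K(\eta)/K)}\Z$, the denominator $e(K(\alpha)/K)$ divides $e(K(\eta)/K)$.

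\textbf{Main obstacle.} The delicate point is the bookkeeping in the first assertion: one must control \emph{all} the Taylor coefficients $c_j$ simultaneously, not just estimate $v(g(\alpha))$. The cleanest route is probably the descending induction on $j$ sketched above, or alternatively a direct argument via the factorization $g=a\prod(x-\beta_k)$ over $\qpb$, splitting the roots $\beta_k$ into those with $v(\t-\beta_k)>\mu_{i-1}$ (impossible unless $\deg g\ge m_i$, so this set is empty) and the rest, for which $v(\alpha-\beta_k)=v(\eta-\beta_k)=v(\t-\beta_k)$; then $g(\eta)/g(\alpha)=\prod(\eta-\beta_k)/(\alpha-\beta_k)$, and each factor is $1+ (\eta-\alpha)/(\alpha-\beta_k)$ with $v\big((\eta-\alpha)/(\alpha-\beta_k)\big)>0$, giving $v(g(\eta)/g(\alpha)-1)>0$ at once. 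This second approach is shorter and avoids the induction, at the cost of working over $\qpb$; I expect the final proof to take this factorization route.
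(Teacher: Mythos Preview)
Your factorization route in the ``Main obstacle'' paragraph is correct and is close in spirit to the paper's proof. Both hinge on the same observation: every root $\rho$ of $g$ has $\deg_K\rho<m_i$, so by the minimality of $m_i$ one gets $v(\t-\rho)\le\mu_{i-1}$, hence $v(\alpha-\rho)\le\mu_{i-1}<v(\eta-\alpha)$. You then conclude multiplicatively via $g(\eta)/g(\alpha)=\prod_k\big(1+(\eta-\alpha)/(\alpha-\rho_k)\big)$. The paper instead keeps the Taylor expansion but uses the identity
\[
\frac{g^{(k)}(x)}{k!}=\sum_{\substack{S\subseteq\{1,\dots,s\}\\|S|=k}}\prod_{j\notin S}(x-\rho_j),
\]
expressed in the roots of $g$ itself (not of the derivatives), which gives $v\big(\tfrac{g^{(k)}(\alpha)}{k!}(\eta-\alpha)^k\big)>v(g(\alpha))$ term by term from the same inequality. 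Your version is arguably cleaner; the paper's has the minor advantage of never forming the quotient $g(\eta)/g(\alpha)$, so one need not argue separately that $g(\alpha)\ne 0$.

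Your first attempt, by contrast, should be discarded. The remark that $j!$ is ``a unit in characteristic zero'' is wrong: $K$ has characteristic zero, but the residue field need not, so $v(j!)$ can be positive. More seriously, passing to the roots of $g^{(j)}$ gives you $v(\alpha-\beta)\le\mu_{i-1}$ for those roots, but no handle on the leading coefficient of $g^{(j)}$ and no direct comparison with $v(g(\alpha))$; the ``descending induction'' you allude to is not set up, and I do not see how to make it work without essentially reverting to the roots of $g$.

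Your second assertion is argued exactly as in the paper.
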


\begin{proof}
By dividing $g(x)$ by its leading coefficient we can suppose that $g(x)$ is monic.
If $g(x)$ has degree zero, the statement is obvious. Suppose $g(x)$ has positive degree $s<m_i$, with roots $\rho_1,\dots,\rho_s$ in $\qpb$. By the minimality of $m_i$, we have necessarily $v(\t-\rho_j)\le\mu_{i-1}$ for all $j$. Hence,
\begin{equation}\label{aux}
v(\rho_j-\alpha)=\min\{v(\t-\rho_j),v(\t-\alpha)\}\le\mu_{i-1}< v(\eta-\alpha). 
\end{equation}
By Taylor's development,
$$
g(\eta)-g(\alpha)=g'(\alpha)(\eta-\alpha)+\cdots+\dfrac{g^{(k)}(\alpha)}{k!}(\eta-\alpha)^k+\cdots
$$
Now, (\ref{aux}) and the formula
$$
\dfrac{g^{(k)}(x)}{k!}=\sum_{\mbox{\begin{tiny}$\begin{array}{c}S\subseteq\{1,\dots,s\},\\\#S=k\end{array}$\end{tiny}}}\prod_{j\not\in S}(x-\rho_j),
$$
show that: $v\left(\dfrac{g^{(k)}(\alpha)}{k!}(\eta-\alpha)^k\right)>v(g(\alpha))$, for all $k\ge 1$. 

If $\deg_K\alpha=m_i$, any $u\in K(\alpha)$ with $v(u)=1/e(K(\alpha)/K)$ can be expressed as $u=g(\alpha)$, for some $g(x)\in K[x]$ of degree less than $m_i$. Hence, from $v(g(\eta))=v(g(\alpha))=1/e(K(\alpha)/K)$, we deduce that $e(K(\alpha)/K)$ divides  $e(K(\eta)/K)$. 
\end{proof}

The algebraic integers $\alpha_1,\dots,\alpha_r$ are close to $\t$ with regard to their degree, but the fields $K_1,\dots,K_r$ are not necessarily subfields of $L$. However, the next proposition, which is a generalization of \cite[II,Prop.4]{Ok},  
shows that the maximal tamely ramified subextensions of $K_1/K,\dots,K_r/K$ are always included in $L$.

\begin{proposition}\label{tame}
For some $1\le i\le r+1$, suppose that $\alpha\in\qpb$ satisfies: 
$$\deg_K\alpha=m_i,\quad v(\t-\alpha)>\mu_{i-1},
$$and let $N=K(\alpha)$. Let $M/K$ be any finite Galois extension containing $L$ and $N$, and let  $G=\gal(M/K)$. Consider the subgroups
$$
H_i=\{\sigma\in G\tq v(\t-\sigma(\t))> \mu_{i-1}\}\supseteq
H'_i=\{\sigma\in G\tq v(\t-\sigma(\t))\ge \mu_i\},
$$  
and let $M^{H_i}\subseteq M^{H'_i}\subseteq M$ be the respective fixed fields. Finally, let $V$ be the maximal tamely ramified subextension of $N/K$. Then, $V\subseteq M^{H_i}\subseteq L\cap N$. If moreover $v(\t-\alpha)=\mu_i$, then $V\subseteq M^{H_i}\subseteq M^{H'_i}\subseteq L\cap N$.
\end{proposition}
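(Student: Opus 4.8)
\emph{Proof proposal.} The plan is to separate the statement into a routine group\nobreakdash-theoretic part --- the inclusions $M^{H_i}\subseteq M^{H'_i}\subseteq L\cap N$ --- and the substantive part $V\subseteq M^{H_i}$.

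For the routine part, one first checks that $H'_i\subseteq H_i$ are subgroups of $G$, which is immediate from the $G$\nobreakdash-invariance of $v$ and the ultrametric inequality (write $\t-\sigma\tau(\t)=(\t-\sigma(\t))+\sigma(\t-\tau(\t))$, and note $v(\t-\sigma^{-1}(\t))=v(\sigma(\t)-\t)=v(\t-\sigma(\t))$). Trivially $\gal(M/L)\subseteq H'_i\subseteq H_i$, since $\sigma(\t)=\t$ forces $v(\t-\sigma(\t))=\infty$. Moreover $\gal(M/N)\subseteq H_i$: if $\sigma(\alpha)=\alpha$ then $v(\alpha-\sigma(\t))=v(\sigma(\alpha)-\sigma(\t))=v(\t-\alpha)>\mu_{i-1}$, so $v(\t-\sigma(\t))\ge\min\{v(\t-\alpha),v(\alpha-\sigma(\t))\}>\mu_{i-1}$; and if in addition $v(\t-\alpha)=\mu_i$, the very same computation gives $\gal(M/N)\subseteq H'_i$. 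Since $\gal\big(M/(L\cap N)\big)$ is the subgroup of $G$ generated by $\gal(M/L)$ and $\gal(M/N)$, passing to fixed fields yields $M^{H_i}\subseteq L\cap N$ in general, and $M^{H_i}\subseteq M^{H'_i}\subseteq L\cap N$ when $v(\t-\alpha)=\mu_i$.

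The crux is to show that every $\sigma\in H_i$ fixes $V$ pointwise. Fix such a $\sigma$ and set $\eta:=\sigma(\alpha)$. Then $v(\t-\eta)>\mu_{i-1}$ as well, since $v(\sigma(\t)-\eta)=v(\t-\alpha)>\mu_{i-1}$ and $v(\t-\sigma(\t))>\mu_{i-1}$. So Lemma~\ref{taylor} applies to the pair $\alpha,\eta$: for every nonzero $g(x)\in K[x]$ with $\deg g<m_i$ one has $v\big(g(\sigma(\alpha))-g(\alpha)\big)>v(g(\alpha))$. As $[N\colon K]=m_i$, every element of $N$ is $g(\alpha)$ with $\deg g<m_i$, hence $v(\sigma(\beta)-\beta)>v(\beta)$ for all nonzero $\beta\in N$. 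Taking $\beta$ to be a unit of $N$ whose residue generates the residue field of $N$ over $\ff{}$, we deduce that $\sigma$ acts as the identity on the residue field of $N$; consequently $\sigma$ fixes pointwise the maximal unramified subextension $N_0/K$ of $N/K$, because $N_0$ is the unique unramified subextension of $M/K$ with that residue field. It remains to control the tame part of the ramification. Let $e_0=e(V/K)$, the prime-to-$p$ component of $e(N/K)$. Since $N_0/K$ is unramified and the principal units of $V$ form a uniquely $e_0$-divisible group ($p\nmid e_0$), the extension $V/N_0$ admits the usual normal form $V=N_0(\varpi)$ with $\varpi\in V\subseteq N$, $v(\varpi)=1/e_0$ and $\varpi^{e_0}\in N_0$. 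As $\sigma$ fixes $N_0$ pointwise, $\sigma(\varpi)^{e_0}=\varpi^{e_0}$, so $\sigma(\varpi)=\xi\varpi$ for a root of unity $\xi$ with $\xi^{e_0}=1$. On the other hand, taking $\beta=\varpi$ in the inequality above gives $v(\sigma(\varpi)-\varpi)>v(\varpi)$, i.e.\ $v(\xi-1)>0$; as $\xi$ has order prime to $p$, this forces $\xi=1$. Hence $\sigma$ fixes $\varpi$, so it fixes $V=N_0(\varpi)$ pointwise. Therefore $V\subseteq M^{H_i}$, and combining with the routine part yields $V\subseteq M^{H_i}\subseteq L\cap N$ in general, and $V\subseteq M^{H_i}\subseteq M^{H'_i}\subseteq L\cap N$ under the extra hypothesis $v(\t-\alpha)=\mu_i$.

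The main obstacle is the passage from ``$\sigma\in H_i$ perturbs the elements of $N$ by a small amount'' (Lemma~\ref{taylor}) to ``$\sigma$ fixes $V$''. One must exploit that such a small perturbation already determines both the residue field and the tamely ramified part of $N/K$ --- precisely because roots of unity of order prime to $p$ inject into the residue field --- while it gives no information on the wild part; this is exactly why only $V$, and not $N$, can appear in the conclusion. The sole genuinely technical ingredient is the normal form $\varpi^{e_0}\in N_0$ for the tamely ramified extension $V/N_0$.
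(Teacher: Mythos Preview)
Your proof is correct and follows the same strategy as the paper: Galois correspondence for the inclusions in $L\cap N$, then Lemma~\ref{taylor} to obtain $v(\sigma(u)-u)>v(u)$ for all nonzero $u\in N$ and all $\sigma\in H_i$, and finally tame ramification theory to conclude that each such $\sigma$ fixes $V$. The only difference lies in this last step: the paper invokes it as a black box, citing \cite[Ch.~5,\S2]{Nark} for the fact that the condition $v(\sigma(u)/u-1)>0$ for all $u\in N^*$ already forces $\sigma$ to fix the maximal tamely ramified subextension of $N/K$, whereas you prove this implication by hand via the explicit description $V=N_0(\varpi)$ with $\varpi^{e_0}\in N_0$ and the rigidity of prime-to-$p$ roots of unity modulo the maximal ideal. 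Your version is therefore more self-contained; the paper's is shorter but leans on an external reference.
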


\begin{proof}
In order to show that $M^{H_i}\subseteq K(\t)\cap K(\alpha)$, we need to check that all $\sigma\in G$ that leave $\t$ or $\alpha$ invariant lie in $H_i$. This is obvious for the automorphisms leaving $\t$ invariant; for the $\sigma\in G$ such that $\sigma(\alpha)=\alpha$, we have $v(\sigma(\t)-\alpha)=v(\sigma(\t)-\sigma(\alpha))>\mu_{i-1}$, so that $$v(\t-\sigma(\t))\ge \min\{v(\t-\alpha), v(\alpha-\sigma(\t))\}>\mu_{i-1}.$$  The same argument shows that $M^{H'_i}\subseteq L\cap N$, if $v(\t-\alpha)=\mu_i$.\medskip

\begin{center}
\setlength{\unitlength}{4.4mm}
\begin{picture}(4,10)
\put(0,9.6){$M$}\put(0,7.2){$L$}\put(-.2,4.8){$M^{H_i}$}\put(0,2.4){$V$}\put(0,0){$K$}
\put(0.3,1){\line(0,1){1}}\put(0.3,3.5){\line(0,1){1}}
\put(0.3,5.9){\line(0,1){1}}\put(0.3,8.3){\line(0,1){1}}
\put(4,5.7){$N$}
\put(1.5,4.9){\line(3,1){2.2}}
\put(1.2,9.6){\line(1,-1){2.9}}
\end{picture}
\end{center}

By the basic properties of tamely ramified extensions \cite[Ch.5,\S2]{Nark}, in order to check that $V\subseteq M^{H_i}$, it is sufficient to show that
$$
v\left(\dfrac{\sigma(u)}{u}-1\right)>0,\quad \forall u\in N^*,\ \forall \sigma\in H_i.
$$
Clearly, $v(\t-\sigma(\alpha))\ge \min\{v(\t-\sigma(\t)),v(\sigma(\t)-\sigma(\alpha))\}>\mu_{i-1}$, for all $\sigma\in H_i$. Finally, any $u\in N^*$ can be written as $u=g(\alpha)$, for some $g(x)\in K[x]$ of degree less than $m_i$.
By Lemma \ref{taylor} applied to $\eta=\sigma(\alpha)$, we have $$v(\sigma(u)-u)=v(g(\sigma(\alpha))-g(\alpha))>v(g(\alpha))=v(u),$$ as desired.  
\end{proof}

The following two corollaries generalize \cite[II,Prop.6, Cors.1,2]{Ok}.

\begin{cor}\label{invariants}
Let  $[F_1,\dots,F_r]$ be an Okutsu frame of $F(x)$. Then,
the numbers $e(K_i/K)$, $f(K_i/K)$, for $1\le i\le r$, do not depend on the chosen Okutsu frame. Moreover,
$$e(K_1/K)\,|\,\cdots\,|\,e(K_r/K)\,|\,e(L/K),\quad f(K_1/K)\,|\,\cdots\,|\,f(K_r/K)\,|\,f(L/K).$$ 
In particular,
$m_1\,|\,\cdots\,|\,m_r\,|\,m_{r+1}=\deg(F)$.
\end{cor}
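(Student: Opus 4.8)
The plan is to deduce everything from Proposition~\ref{tame} together with the recursive structure supplied by Corollary~\ref{recFi}. The key observation is that Proposition~\ref{tame}, applied with $\alpha=\alpha_i$ (so that $v(\t-\alpha_i)=\mu_i$ and $\deg_K\alpha_i=m_i$), yields a chain of inclusions $V_i\subseteq M^{H_i}\subseteq M^{H'_i}\subseteq L\cap K_i$, where $V_i$ is the maximal tamely ramified subextension of $K_i/K$. Since $e(V_i/K)=e(K_i/K)$ and $f(V_i/K)=f(K_i/K)$ (the residual-tame part carries all the residual degree and all the prime-to-$p$ part of the ramification, and the wild part of $K_i/K$ contributes only a $p$-power to $e$ which, being a power of $p$, can be recovered intrinsically once we know $V_i$), it suffices to pin down $e$ and $f$ of the tame tower $V_i$. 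The first step is therefore to make precise that the fields $M^{H_i}$, $M^{H'_i}$ are canonically attached to $F(x)$: $H_i$ and $H'_i$ are defined purely in terms of the conjugates $\sigma(\t)$ of $\t$ and the intrinsic numbers $\mu_{i-1}$, $\mu_i$, which by the remark after their definition do not depend on the choice of $\t$; hence $[M^{H_i}\colon K]=[G\colon H_i]$ and $[M^{H'_i}\colon K]=[G\colon H'_i]$ depend only on $F(x)$.

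Next I would identify $V_i$ with $M^{H_i}$. The inclusion $V_i\subseteq M^{H_i}$ is given by the proposition; for the reverse, I would argue that $M^{H_i}$ is itself tamely ramified over $K$ and contained in $K_i$, forcing $M^{H_i}\subseteq V_i$ by maximality. Tameness of $M^{H_i}/K$ follows because $M^{H_i}$ is contained in the fixed field of the wild inertia acting on the relevant data --- more concretely, one checks (again via Lemma~\ref{taylor}, exactly as in the proof of Proposition~\ref{tame}) that the wild inertia subgroup of $\gal(M/M^{H_i})$ acts trivially, or alternatively one invokes the standard fact that $V_i$ is the compositum inside $K_i$ of the unramified part and a suitably chosen tame Kummer step, and that this compositum is visibly stable under every $\sigma\in H_i$ by the same computation $v(\sigma(u)/u-1)>0$. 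Either way, $V_i=M^{H_i}$, so $e(K_i/K)=e(V_i/K)=e(M^{H_i}/K)$ and $f(K_i/K)=f(M^{H_i}/K)$ are intrinsic; the wild part $p^{k_i}$ of $e(K_i/K)$ is likewise intrinsic, being $[K_i\colon V_i]=m_i/[M^{H_i}\colon K]$ with $m_i$ already known to be intrinsic.

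For the divisibility chain, the mechanism is the monotonicity $H_1\supseteq H'_1\supseteq H_2\supseteq H'_2\supseteq\cdots\supseteq H_r\supseteq H'_r$, which is immediate from $\mu_0<\mu_1<\mu_2<\cdots<\mu_r$: a larger valuation condition cuts out a smaller subgroup. This gives a tower $M^{H_1}\subseteq M^{H'_1}\subseteq M^{H_2}\subseteq\cdots\subseteq M^{H_r}\subseteq M^{H'_r}\subseteq L$, whence $e(M^{H_i}/K)\mid e(M^{H_{i+1}}/K)$ and the analogous statement for $f$; combining with $V_i=M^{H_i}$ and with $H'_r\subseteq\gal(M/L)$ (so $M^{H'_r}\subseteq L$, giving the final divisibility $e(K_r/K)\mid e(L/K)$ and $f(K_r/K)\mid f(L/K)$) yields both chains. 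Then $m_i=e(K_i/K)f(K_i/K)$ --- wait, rather $m_i=[K_i\colon K]=e(K_i/K)f(K_i/K)$ --- so the divisibilities for $e$ and $f$ multiply together to give $m_1\mid m_2\mid\cdots\mid m_r\mid m_{r+1}=\deg F$.

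The main obstacle I anticipate is the clean identification $V_i=M^{H_i}$: one must be careful that $M^{H_i}$ is genuinely tame over $K$ and not merely a subfield of $K_i$ containing $V_i$. The safest route is to not insist on equality but rather to show $e(M^{H_i}/K)=e(K_i/K)$ and $f(M^{H_i}/K)=f(K_i/K)$ directly --- the inclusions $V_i\subseteq M^{H_i}\subseteq K_i$ force $f(V_i/K)\mid f(M^{H_i}/K)\mid f(K_i/K)=f(V_i/K)$, collapsing the middle term for $f$, and for $e$ one uses that $e(K_i/V_i)$ is a $p$-power while any intermediate step $M^{H_i}/V_i$ would have to contribute a further Kummer-type tame extension inside the wild tower, which is impossible; thus $e(M^{H_i}/K)=e(V_i/K)=e(K_i/K)$ as well. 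This makes the invariance of $e(K_i/K),f(K_i/K)$ rest only on the intrinsic nature of the groups $H_i$, which is already secured.
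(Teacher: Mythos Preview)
Your approach through the intrinsic fields $M^{H_i}$ works for part of the statement but has a genuine gap in the treatment of $e(K_i/K)$ when wild ramification is present. You repeatedly write ``$e(V_i/K)=e(K_i/K)$'' and finally ``$e(M^{H_i}/K)=e(V_i/K)=e(K_i/K)$'', but $e(V_i/K)$ is only the \emph{prime-to-$p$ part} of $e(K_i/K)$; the equality fails whenever $K_i/K$ is wildly ramified. Your attempted justification, that a nontrivial step $M^{H_i}/V_i$ ``would have to contribute a further Kummer-type tame extension inside the wild tower, which is impossible'', is backwards: $K_i/V_i$ is totally \emph{wildly} ramified, so any nontrivial intermediate extension $M^{H_i}/V_i$ is itself wild, and nothing in your argument rules it out. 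Consequently the tower $M^{H_1}\subseteq\cdots\subseteq M^{H_r}\subseteq L$ gives only $e(M^{H_i}/K)\mid e(M^{H_{i+1}}/K)$, not $e(K_i/K)\mid e(K_{i+1}/K)$, and the $m$-chain you deduce from it is unproven. (By contrast, your $f$-squeeze $f(V_i/K)\mid f(M^{H_i}/K)\mid f(K_i/K)=f(V_i/K)$ is correct, so the $f$-chain and the invariance of $f(K_i/K)$, hence of $e(K_i/K)=m_i/f(K_i/K)$, do go through along these lines.)

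The paper avoids the whole issue by using Lemma~\ref{taylor} directly rather than passing through tame subextensions. That lemma already gives: if $\deg_K\alpha=m_i$ and $v(\t-\alpha),\,v(\t-\eta)>\mu_{i-1}$, then $e(K(\alpha)/K)\mid e(K(\eta)/K)$---for the \emph{full} ramification index, wild part included. Applied symmetrically to two choices of $\alpha_i$ it yields the invariance of $e(K_i/K)$; applied with $\alpha=\alpha_r$ and $\eta=\t$ it gives $e(K_r/K)\mid e(L/K)$ in one line, while Proposition~\ref{tame} supplies $f(K_r/K)\mid f(L/K)$. The remaining links in both chains then follow by the recursion of Corollary~\ref{recFi}. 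The missing ingredient in your argument is precisely this direct appeal to Lemma~\ref{taylor} for the $e$-divisibility.
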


\begin{proof}
For all $1\le i\le r$, suppose that $\eta\in\qpb$ satisfies $\deg_K\eta=m_i$ and $v(\t-\eta)=\mu_i$. By Lemma \ref{taylor}, 
$e(K_i/K)=e(K(\eta)/K)$, because these two numbers divide one to each other. Hence, $f(K_i/K)=m_i/e(K_i/K)=m_i/e(K(\eta)/K)=f(K(\eta)/K)$. 

By Proposition \ref{tame}, we have $f(K_r/K)\,|\,f(L/K)$. By Lemma \ref{taylor}, applied to $i=r$, $\alpha=\alpha_r$ and $\eta=\t$, we get $e(K_r/K)\,|\,e(L/K)$. The rest of the statements follow from Corollary \ref{recFi} by a recurrent argument.
\end{proof}

\begin{cor}\label{tameincluded}
Suppose $L/K$ tamely ramified. Let $[F_1,\dots,F_r]$ be an Okutsu frame of $F(x)$, and let $K_{r+1}:=L$. Then,
\begin{enumerate}
\item[(i)] $K_i=M^{H_i}=M^{H'_i}$, for all $1\le i\le r+1$. Hence, $K_1\subseteq \cdots\subseteq K_r\subseteq L$.
\item[(ii)] $\{v(\t-\sigma(\t))\tq \sigma\in G\}=\left\{\begin{array}{ll}\{\mu_1,\dots,\mu_r,\mu_{r+1}\}, & \mbox{ if }m_1=1,\\\{\mu_0,\mu_1,\dots,\mu_r,\mu_{r+1}\}, & \mbox{ if }m_1>1. \end{array}
\right.$ 

In particular, $\mu_r$ is Krasner radius of $F(x)$: $$\mu_r=\max\{v(\t-\t')\tq F(\t')=0, \,\t'\ne\t\}.$$
\end{enumerate}
\end{cor}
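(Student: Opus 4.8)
The plan is to exploit Proposition \ref{tame} in the tamely ramified situation, where the maximal tamely ramified subextension of each $K_i/K$ is the whole of $K_i$, and similarly $L$ is its own maximal tamely ramified subextension. First I would apply Proposition \ref{tame} with $\alpha=\alpha_i$ and $v(\t-\alpha_i)=\mu_i$ for each $1\le i\le r$, obtaining $K_i=V\subseteq M^{H_i}\subseteq M^{H'_i}\subseteq L\cap K_i\subseteq K_i$, which forces all these inclusions to be equalities. For $i=r+1$ the group $H'_{r+1}$ is trivial (any $\sigma$ fixing no strict condition beyond $v(\t-\sigma\t)\ge\infty$ must fix $\t$), so $M^{H'_{r+1}}=L$, and one checks $H_{r+1}$ also gives $M^{H_{r+1}}=L$ since $v(\t-\sigma\t)>\mu_r$ already implies $\sigma$ fixes $\t$: indeed $\mu_r$ is the largest value $v(\t-\eta)$ for $\deg_K\eta=n$, and for a conjugate $\sigma\t\ne\t$ one has $v(\t-\sigma\t)\le\mu_r$. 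This last point is exactly part (ii) restricted to the top, so I would prove (ii) first. Once $K_i=M^{H_i}$ for all $i$, the chain $H_1\supseteq H_1'\supseteq H_2\supseteq\cdots\supseteq H_r\supseteq H_r'\supseteq H_{r+1}=1$ of subgroups (nested because the $\mu_j$ are increasing) yields the tower $K_1\subseteq\cdots\subseteq K_r\subseteq L$ by Galois correspondence, giving (i).

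For part (ii), the key is to determine the set $S=\{v(\t-\sigma(\t))\tq\sigma\in G\}$. The inclusion $S\subseteq\{\mu_0,\mu_1,\dots,\mu_{r+1}\}$ is the heart of the matter, and it is where I expect the main work. Given $\sigma\in G$ with $\sigma(\t)\ne\t$, set $\beta=\sigma(\t)$, so $\deg_K\beta=n=m_{r+1}$, and I claim $v(\t-\beta)\in\{\mu_0,\dots,\mu_r\}$. Let $i$ be the largest index with $v(\t-\beta)>\mu_{i-1}$; such an $i$ exists and satisfies $i\le r$ because $v(\t-\beta)$ is finite while $\mu_{r+1}=\infty$ (here we use $\beta\ne\t$). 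By maximality, $v(\t-\beta)\le\mu_i$ if there were an element of degree $m_i$ achieving a larger value — more carefully, I would argue that $\beta$ plays the role of a candidate $\eta$ at level $i$: since $v(\t-\beta)>\mu_{i-1}$ and $\deg_K\beta=n\ge m_i$... the subtlety is that $\deg_K\beta=n$ need not equal $m_i$, so the defining maximality of $\mu_i$ does not directly apply. Instead I would invoke Lemma \ref{taylor} (or rather the tame hypothesis via Proposition \ref{tame}): in the tame case $\sigma$ lies in $H_i$, hence $\sigma\in\gal(M/K_i)$, so $\sigma$ fixes $\alpha_i$; then $v(\t-\beta)=v(\t-\sigma\t)\ge\min\{v(\t-\alpha_i),v(\alpha_i-\sigma\t)\}$, and applying Lemma \ref{taylor}-type control together with $v(\t-\alpha_i)=\mu_i$ and $v(\alpha_i-\sigma\t)=v(\sigma\alpha_i-\sigma\t)=v(\alpha_i-\t)=\mu_i$ gives $v(\t-\beta)\ge\mu_i$, whence $v(\t-\beta)=\mu_i$ by maximality of $i$. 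This pins $S\setminus\{v(\t-\t)\}\subseteq\{\mu_1,\dots,\mu_r\}$.

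For the reverse inclusion, each $\mu_i$ with $1\le i\le r$ is attained: by (i), $K_i\subsetneq K_{i+1}$ (the degrees $m_i<m_{i+1}$ differ), so there is $\sigma\in\gal(M/K_i)\setminus\gal(M/K_{i+1})$, i.e. $\sigma\in H_i'\setminus H_{i+1}$, giving $\mu_i\le v(\t-\sigma\t)<\mu_{i+1}$, and by the previous paragraph the only possibility is $v(\t-\sigma\t)=\mu_i$. The value $\mu_{r+1}=\infty$ is the value at $\sigma=\mathrm{id}$. Finally $\mu_0=0$ is attained if and only if some conjugate $\sigma\t$ satisfies $v(\t-\sigma\t)=0$, equivalently $v(\t-\sigma\t)\le\mu_0$, which by the trichotomy above happens precisely when $0\notin\{\mu_1,\dots,\mu_r\}$ as positive values, i.e. when $\mu_1>0$; and $\mu_1>0$ iff $m_1>1$ (if $m_1=1$ then $\alpha_1\in K$ and $\mu_1=v(\t-\alpha_1)$ could be taken... one checks $m_1=1$ forces the depth-one step to start from $\mu_0=0$ reached, versus $m_1>1$ meaning no degree-one element gets closer than $0$). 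Either way the stated dichotomy follows, and the ``in particular'' is the case $i=r$: $\mu_r=\max\{v(\t-\t')\tq F(\t')=0,\ \t'\ne\t\}$ since the conjugates of $\t$ are exactly the $\sigma(\t)$. The main obstacle, as flagged, is the degree mismatch $\deg_K\sigma(\t)=n\ne m_i$ when trying to bound $v(\t-\sigma\t)$ from below by $\mu_i$; resolving it cleanly requires routing the argument through the fixed-field description $\sigma\in\gal(M/K_i)$ supplied by Proposition \ref{tame} rather than through the bare definition of $\mu_i$.
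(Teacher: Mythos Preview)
Your approach is essentially the paper's: both arguments rest entirely on Proposition \ref{tame} and the groups $H_i$, $H'_i$. The paper's execution is cleaner in two respects. First, it sets $\alpha_{r+1}:=\t$ and applies Proposition \ref{tame} uniformly for $1\le i\le r+1$; since $L/K$ is tame, $V=K_{r+1}=L$ and one gets $K_{r+1}=M^{H_{r+1}}=M^{H'_{r+1}}$ directly, with no need to establish part (ii) first. Second, once $H_i=H'_i$ for all $i$, the inclusion $\{v(\t-\sigma(\t))\}\subseteq\{\mu_0,\dots,\mu_{r+1}\}$ is a one-line consequence: any $\sigma$ with $\mu_{i-1}<v(\t-\sigma(\t))<\mu_i$ would lie in $H_i\setminus H'_i=\emptyset$. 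Your explicit computation (that $\sigma\in H_i$ fixes $\alpha_i$, whence $v(\t-\sigma(\t))\ge\min\{v(\t-\alpha_i),v(\sigma\alpha_i-\sigma\t)\}=\mu_i$) unpacks exactly this, so the content is the same.

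There are, however, two genuine slips in your write-up. The claim that the maximal index $i$ with $v(\t-\beta)>\mu_{i-1}$ satisfies $i\le r$ ``because $v(\t-\beta)$ is finite while $\mu_{r+1}=\infty$'' is wrong: finiteness only gives $v(\t-\beta)<\mu_{r+1}$, not $v(\t-\beta)\le\mu_r$. The correct exclusion of $i=r+1$ is precisely the application of Proposition \ref{tame} with $\alpha=\t$: if $v(\t-\sigma(\t))>\mu_r$ then $\sigma\in H_{r+1}$, and since $L=V\subseteq M^{H_{r+1}}$, $\sigma$ fixes $\t$. Secondly, your treatment of whether $\mu_0=0$ lies in the set is muddled: the assertion ``$\mu_1>0$ iff $m_1>1$'' is false, since $\mu_1>0$ always (by definition there exists $\eta$ of degree $m_1$ with $v(\t-\eta)>\mu_0=0$). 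The correct argument is the one the paper gives: $0$ is attained iff $H_1\ne G$, iff $M^{H_1}\ne K$, iff $K_1\ne K$, iff $m_1>1$. Finally, you should state explicitly (as the paper does via Corollary \ref{invariants}) why each $K_i/K$ is tamely ramified: this follows from $e(K_i/K)\mid e(L/K)$.
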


\begin{proof}
Denote $\alpha_{r+1}:=\t$. By Corollary \ref{invariants}, if $L/K$ is tamely ramified, then $K_i/K$ is tamely ramified, for all $1\le i\le r+1$. Proposition \ref{tame}, applied to each $\alpha_i$, shows that $K_i=M^{H_i}=M^{H'_i}$, and hence,
$H_i=H'_i$, for all $1\le i\le r+1$.  Moreover, for all $1\le i\le r$ we have $H_{i+1}\subsetneq H_i=H'_i$, because $[K_{i+1}\colon K_i]>1$; hence, there is some $\sigma\in H_i$ with $v(\t-\sigma(\t))=\mu_i$. Finally, there is no
$\sigma\in G$ with $v(\t-\sigma(\t))=0$, if and only if $H_1=G$, or equivalently, $K_1=K$.  
\end{proof}

\subsection{Okutsu frames and divisor polynomials}
\begin{proposition}{\cite[I,Prop.1]{Ok}}\label{DP}
 For any integer $0\le m<n$ there exists a monic polynomial $g_m(x)\in\zpx$ of degree $m$ such that 
\begin{equation}\label{vDP}
 v(g_m(\t))\ge v(g(\t))-v(g(x)),
\end{equation}
for all polynomials $g(x)\in\zpx$ of degree $m$. 
\end{proposition}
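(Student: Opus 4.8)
The plan is to reduce the statement to a maximization problem that can be solved by a monic polynomial. For $g(x)\in\zpx$ of degree $m$ set $w(g):=v(g(\t))-v(g(x))$. Since $\t$ is an algebraic integer we have $v(g(\t))\ge v(g(x))$, so $w(g)\ge 0$, and $w$ is invariant under $g\mapsto cg$ for $c\in K^*$; writing $g=\pi^{v(g(x))}g^*$ with $g^*$ of content $1$ gives $w(g)=v(g^*(\t))$. For a monic $g$ the leading coefficient is a unit, so $v(g(x))=0$ and $w(g)=v(g(\t))$. Hence it suffices to produce a monic $g_m$ of degree $m$ with $v(g_m(\t))\ge v(h(\t))$ for every content-$1$ polynomial $h(x)\in\zpx$ of degree $m$, and in particular to show that the right-hand side is bounded and the bound is attained by a monic polynomial.

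Next I would establish finiteness and attainment. If $h$ is monic of degree $m<n$, then $h(x)=\prod_{l=1}^m(x-\rho_l)$ with each $\rho_l\in\qpb$ of degree $\le m<n$ over $K$; by the definition of $m_{r+1}=n$, any $\eta$ with $v(\t-\eta)>\mu_r$ satisfies $\deg_K\eta\ge n$, so $v(\t-\rho_l)\le\mu_r$ for all $l$ and $v(h(\t))=\sum_l v(\t-\rho_l)\le m\mu_r<\infty$. More generally, the set of values $v(h(\t))$ over all content-$1$ polynomials $h\in\zpx$ of degree $\le m$ is bounded: otherwise, by compactness of $\oo$ (the residue field is finite), a sequence $h_k$ of such polynomials with $v(h_k(\t))\to\infty$ would have a subsequence converging coefficientwise to some $h_\infty\in\zpx$ of degree $<n$ with $h_\infty(\t)=0$, forcing $h_\infty=0$; but then the contents $v(h_k(x))$ would tend to $\infty$, contradicting $v(h_k(x))=0$. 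Since all these values lie in $\tfrac1{e(L/K)}\Z_{\ge0}$, the supremum $D$ is attained; fix a content-$1$ polynomial $g^\sharp$ with $v(g^\sharp(\t))=D$ of minimal degree $m^\sharp\le m$. Note also that this maximum is non-decreasing in $m$: from $h$ of degree $k\le m$ one gets $hx^{m-k}$ of degree $m$ with $v(hx^{m-k}(\t))=v(h(\t))+(m-k)v(\t)\ge v(h(\t))$.

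It then remains to pass from $g^\sharp$ to a monic polynomial of degree $m$. If the leading coefficient of $g^\sharp$ is a unit, then $g^\sharp$ divided by it is monic of degree $m^\sharp$ with value $D$, and multiplying by $x^{m-m^\sharp}$ yields a monic $g_m$ of degree $m$ with $v(g_m(\t))\ge D$, as desired. If the leading coefficient of $g^\sharp$ is not a unit, let $k$ be the largest index whose coefficient in $g^\sharp$ is a unit (so $k<m^\sharp$) and write $g^\sharp=g_0+g_1$, where $g_0$ is the sum of the terms of degree $\le k$ (content $1$, degree exactly $k$) and $g_1$ is the sum of the remaining terms, all of which have non-unit coefficients. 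From $g_0=g^\sharp-g_1$ we get $v(g_0(\t))\ge\min\{D,v(g_1(\t))\}$; if $v(g_1(\t))\ge D$ this forces $v(g_0(\t))=D$, so $g_0$ is a maximizer of degree $k<m^\sharp$, contradicting minimality of $m^\sharp$. The remaining possibility is $v(g_0(\t))=v(g_1(\t))<D$, a genuine partial cancellation at $\t$ between the two segments; here one extracts the $\pi$-content $\pi^t$ (with $t\ge1$) from $g_1$, obtaining a content-$1$ polynomial $h=g_1/\pi^t$ of degree $\le m^\sharp$ with $v(h(\t))=v(g_0(\t))-t$ strictly smaller, and repeats the analysis — either $h$ now has a unit leading coefficient, or its degree or its value at $\t$ has strictly dropped — the descent terminating because these values lie in $\tfrac1{e(L/K)}\Z_{\ge0}$.

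The main obstacle is precisely this last cancellation case: controlling the valuations $v(g_0(\t))$ and $v(g_1(\t))$ when the "unit part'' and "non-unit part'' of a degree-$m$ polynomial with non-unit leading coefficient cancel at $\t$, so as to exclude that such a polynomial beats every monic polynomial of degree $\le m$. Everything else — the reduction to content-$1$ polynomials, the bound $m\mu_r$ for monic polynomials, the compactness argument for finiteness, and the monotonicity of the maximum in $m$ — is routine.
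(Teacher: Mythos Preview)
Your reduction to content-$1$ polynomials is correct, and the compactness argument for boundedness works (given that $\oo$ is compact). The gap, as you yourself identify, is the cancellation case: when $v(g_0(\t))=v(g_1(\t))<D$, your ``descent'' via $h=g_1/\pi^t$ leads nowhere. You obtain a content-$1$ polynomial $h$ with $v(h(\t))<D$, but $h$ is no longer a maximizer, so you cannot ``repeat the analysis'' --- that analysis was premised on starting from a maximizer $g^\sharp$ of minimal degree. The quantities you say are dropping (degree, value at $\t$) are not attached to a polynomial that still achieves $D$, so there is no well-defined descent. This case is not a technicality to be cleaned up; it is the whole content of the proposition.

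The paper sidesteps the difficulty by induction on $m$. One first chooses $g_m$ monic of degree $m$ maximizing $v(g_m(\t))$ among \emph{monic} polynomials only; finiteness is obtained from $\lfloor v(g(\t))\rfloor\le \ell(\oo_L/\oo[\t])$, so neither $\mu_r$ nor compactness is needed. One then proves by induction on $m$ that $v(g_m(\t))\ge v(g(\t))$ for every $g\in\zpx$ of degree $m$ with $v(g(x))=0$. For the inductive step, write $g=a\,g_m+r$ with $a$ the leading coefficient of $g$ and $m':=\deg r<m$. If $v(a)=0$ the claim is immediate. If $v(a)>0$, then $v(r(x))=0$, and the induction hypothesis together with the maximality of $g_m$ gives
\[
v(g_m(\t))\ \ge\ v\bigl(\t^{\,m-m'}g_{m'}(\t)\bigr)\ \ge\ v(g_{m'}(\t))\ \ge\ v(r(\t)).
\]
Hence $v(a\,g_m(\t))>v(g_m(\t))\ge v(r(\t))$, so $v(g(\t))=v(r(\t))\le v(g_m(\t))$. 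The point is that dividing by the monic maximizer $g_m$ converts the troublesome ``non-unit leading coefficient'' situation into a lower-degree content-$1$ remainder, to which the induction hypothesis applies cleanly --- precisely what your splitting $g^\sharp=g_0+g_1$ fails to do.
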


\begin{proof}
Let $g(x)\in\zpx$ be a monic polynomial of degree $m$.  Let $\oo'\subseteq \oo_L$ be the $\oo$-module generated by $\t$ and $g(\t)/\pi^{\lfloor v(g(\t))\rfloor}$. Clearly, 
$$
\lfloor v(g(\t))\rfloor=\ell\left(\oo'/\oo[\t]\right)\le \ell\left(\oo_L/\oo[\t]\right)<\infty,
$$where $\ell$ denotes the length as an $\oo$-module.
Since $v$ restricted to $L$ is discrete, $v(g(\t))$ takes only a finite number of values; hence, there exists a monic $g_m(x)\in\zpx$ of degree $m$ such that $v(g_m(\t))\ge v(g(\t))$ for all monic $g(x)\in\zpx$ of degree $m$.
In order to check (\ref{vDP}) it is sufficient to show that $v(g_m(\t))\ge v(g(\t))$ for any $g(x)\in\zpx$ of degree $m$ such that $v(g(x))=0$. Let us prove this by induction on $m$. For $m=0$ we have $g_m(x)=1$ and the statement is obvious. Suppose $m>0$. If $a\in\oo$ is the leading coefficient of $g(x)$, we write
$$
g(x)=ag_m(x)+r(x), \quad m':=\deg r(x)<m.
$$ 
If $v(a)=0$, then $v(g_m(\t))\ge v(a^{-1}g(\t))=v(g(\t))$, by the construction of $g_m(x)$. If $v(a)>0$, then $v(r(x))=0$ and by the induction hypothesis
$$
v(g_m(\t)) \ge v(\t^{m-m'}g_{m'}(\t)) \ge v(g_{m'}(\t)) \ge v(r(\t)).
$$ 
Thus, $v(r(\t))<v(ag_m(\t))$, so that $v(g(\t))=v(r(\t))\le v(g_m(\t))$.
\end{proof}

\begin{defin}
Clearly, (\ref{vDP}) does not depend on the choice of $\t$ among the roots of $F(x)$. We call $g_m(x)$ a \emph{divisor polynomial} of degree $m$ of $F(x)$. 
\end{defin}

\begin{lem}\label{comparison}
Let $[F_1,\dots,F_r]$ be an Okutsu frame of $F(x)$. Let $h(x)\in\zpx$ be a monic irreducible polynomial of degree $m$, and let
$$
\delta_F(h)=\max\{v(\t-\beta)\tq \beta\in\qpb\mbox{ is a root of }h(x)\}.
$$Then, for any $1\le i\le r$,
\begin{enumerate}
\item[(i)] $\delta_F(h)<\mu_i\ \Longrightarrow\ v(h(\t))<(m/m_i)v(F_i(\t))$,
\item[(ii)] $\delta_F(h)=\mu_i\ \Longrightarrow\ v(h(\t))=(m/m_i)v(F_i(\t))$,
\item[(iii)] $\delta_F(h)>\mu_i\ \Longrightarrow\ v(h(\t))>(m/m_i)v(F_i(\t))$.
\end{enumerate}
\end{lem}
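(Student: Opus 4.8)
The plan is to estimate $v(h(\theta))$ by factoring $h(x)=\prod_\beta (x-\beta)$ over $\overline K$ and analyzing $v(\theta-\beta)$ root by root, using the basic ultrametric identity $v(\theta-\beta)=\min\{v(\theta-\alpha_i),v(\alpha_i-\beta)\}$ whenever the two valuations differ. The crux is to compare $v(h(\theta))$ with $v(h(\alpha_i))$ and then use Lemma~\ref{comparison}'s companion fact, namely that $\alpha_i$ has degree $m_i$ so $v(F_i(\theta))$ is controlled, together with the characterization $\mu_i=v(\theta-\alpha_i)$.

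First I would reduce to comparing $\theta$ with $\alpha_i$. Since $\deg_K\alpha_i=m_i$ and $h$ is irreducible of degree $m$, I want to relate $v(h(\theta))$ to $v(h(\alpha_i))$ via Lemma~\ref{taylor}: if $v(\theta-\alpha_i)>\mu_{i-1}$ (which holds, as $v(\theta-\alpha_i)=\mu_i>\mu_{i-1}$) and $h$ has degree $m$, then as long as $m<m_{i+1}$ — but $m$ need not be less than $m_{i+1}$, so instead I should apply the Taylor estimate directly to $h$ at the point $\alpha_i$ comparing with $\theta$, which is legitimate whenever $h$ has all its roots at distance $\le\mu_{i-1}$ from $\theta$; but this may fail if $\delta_F(h)>\mu_{i-1}$. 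So the argument has to be organized around $\delta_F(h)$ itself. The cleanest route: pick a root $\beta_0$ of $h$ realizing $\delta_F(h)$, and for every root $\beta$ of $h$ use that $v(\theta-\beta)=\min\{v(\theta-\alpha_i),v(\alpha_i-\beta)\}$ unless these agree. In case (i), $\delta_F(h)<\mu_i=v(\theta-\alpha_i)$, so for \emph{every} root $\beta$ we get $v(\theta-\beta)=v(\alpha_i-\beta)=v(h$-th factor of $(x-\beta)$ evaluated at $\alpha_i)$, hence $v(h(\theta))=v(h(\alpha_i))$; then $v(h(\alpha_i))$ must be estimated via a divisor-polynomial / minimality argument against $F_i$ at $\alpha_i$, using that $[F_1,\dots,F_{i-1}]$ is an Okutsu frame of $F_i$ (Corollary~\ref{recFi}), giving the strict inequality $v(h(\alpha_i))<(m/m_i)v(F_i(\alpha_i))$ and finally $v(F_i(\alpha_i))=v(F_i(\theta))$ by the same ultrametric replacement since $\delta_{F_i}$-type distances are smaller. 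In case (ii), the root $\beta_0$ contributes exactly $\mu_i=v(\theta-\alpha_i)$ while all others contribute $v(\alpha_i-\beta)$, and I would match $v(h(\theta))$ against $(m/m_i)v(F_i(\theta))$ by an exact count analogous to the way $v(F_i(\theta))$ decomposes over the conjugates of $\alpha_i$. Case (iii) is the ``large'' case: some root $\beta_0$ has $v(\theta-\beta_0)>\mu_i$, which forces (by minimality of $m_{i+1}$ and monotonicity of the $\mu_j$) that $m\ge m_{i+1}$, and then a direct comparison shows the product picks up enough extra valuation.

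The technical heart, and the step I expect to be the main obstacle, is establishing the \emph{sharp} multiplicative comparison $v(h(\alpha_i)) \lessgtr (m/m_i)\,v(F_i(\alpha_i))$ — i.e.\ that among all monic polynomials of degree $m$, the value $v(\cdot(\alpha_i))$ is governed by how $m$ compares to $m_i$ and how $\delta_F(h)$ compares to $\mu_i$, with the ``unit of valuation'' being $v(F_i(\theta))/m_i$. For this I would invoke Proposition~\ref{DP} (divisor polynomials) applied to the extension $K_i/K$ or $L/K$ to bound $v(h(\theta))\le (m/m_i)v(g_{m_i}(\theta))$ after a Euclidean-division reduction of $h$ modulo $F_i$, combined with the fact that $F_i$ itself is (up to the divisor-polynomial optimality) the best degree-$m_i$ polynomial, so $v(F_i(\theta))$ is essentially $v(g_{m_i}(\theta))$ — and then decompose $h=\prod(x-\beta)$ and compare term-by-term against the conjugate decomposition $F_i=\prod(x-\alpha_i^{(j)})$. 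Making the inequalities strict in cases (i) and (iii), and exact in case (ii), requires carefully tracking which roots $\beta$ of $h$ lie ``inside'' versus ``outside'' the disc of radius $\mu_i$ around $\theta$, and using the minimality clause defining $m_i$ (no polynomial of degree $<m_i$ has a root at distance $>\mu_{i-1}$). I would handle this by an induction on $i$, using Corollary~\ref{recFi} to pass from $F$ to $F_i$ and reduce the multiplicative comparison for $F_i$ to the already-known invariants $\mu_1,\dots,\mu_{i-1}$.
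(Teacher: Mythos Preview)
Your case (i) contains a concrete error: you write $v(F_i(\alpha_i))$, but $\alpha_i$ is by definition a root of $F_i$, so $F_i(\alpha_i)=0$ and $v(F_i(\alpha_i))=\infty$. The inequality $v(h(\alpha_i))<(m/m_i)v(F_i(\alpha_i))$ is then vacuous, and the identity $v(F_i(\alpha_i))=v(F_i(\theta))$ is simply false. More structurally, your strategy of transferring the evaluation point from $\theta$ to $\alpha_i$ works nicely for $h$ (your computation $v(h(\theta))=v(h(\alpha_i))$ in case (i) is correct), but it cannot work for $F_i$, precisely because $\alpha_i$ kills $F_i$. You then try to patch this via divisor-polynomial bounds and an induction on $i$; but the statement ``$F_i$ is a divisor polynomial of degree $m_i$'' is Theorem~\ref{PDP}, which is proved \emph{using} the present lemma, so invoking it here would be circular.

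The paper sidesteps all of this with a Galois-averaging device that your proposal is missing. Fix a finite Galois extension $M/K$ containing $L$, $K_i$ and the splitting field of $h$, with group $G$, and pick a root $\beta$ of $h$ realizing $\delta_F(h)$. Since $G$ acts transitively on the roots of $h$ (resp.\ of $F_i$), one has
\[
\sum_{\sigma\in G} v(\theta-\sigma(\beta)) = \frac{\#G}{m}\,v(h(\theta)),\qquad
\sum_{\sigma\in G} v(\theta-\sigma(\alpha_i)) = \frac{\#G}{m_i}\,v(F_i(\theta)),
\]
so the factor $m/m_i$ appears automatically and both sides are evaluated at $\theta$, never at $\alpha_i$. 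It then suffices to compare $v(\theta-\sigma(\beta))$ with $v(\theta-\sigma(\alpha_i))$ for each $\sigma$, which follows from the same ultrametric manipulations you were using (through the intermediate bound $v(\theta-\sigma(\theta))\ge v(\theta-\sigma(\beta))$). Strict inequality at $\sigma=1$ yields the strict cases (i) and (iii), and the two opposite inequalities combine to give the equality in (ii).
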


\begin{proof}
Take a finite Galois extension $M/K$ containing $L$, $K_i$ and the roots of $h(x)$. Denote $G:=\gal(M/K)$. Choose a root $\beta$ of $h(x)$ such that $v(\t-\beta)=\delta_F(h)\ge v(\t-\sigma(\beta))$, for all $\sigma\in G$. This implies 
\begin{equation}\label{bound}
v(\t-\sigma(\t))\ge\min\{v(\t-\sigma(\beta)),v(\sigma(\beta)-\sigma(\t))\}= v(\t-\sigma(\beta)),  
\end{equation}
for all $\sigma\in G$. Suppose $\delta_F(h)\le \mu_i$. We claim that,
\begin{equation}\label{ab}
v(\t-\sigma(\beta)) \le v(\t-\sigma(\alpha_i)),\quad\forall \,\sigma\in G.
\end{equation}
In fact, by the maximality of $\mu_i$, we have $v(\t-\sigma(\alpha_i))\le \mu_i$, for all $\sigma\in G$. Now, if  $v(\t-\sigma(\alpha_i))=\mu_i$, then the inequality (\ref{ab}) is obvious; on the other hand, if  $v(\t-\sigma(\alpha_i))<\mu_i$, we have by (\ref{bound}) 
$$
v(\t-\sigma(\beta))\le v(\t-\sigma(\t))=
\min\{v(\t-\sigma(\alpha_i)),v(\sigma(\alpha_i)-\sigma(\t))\}=v(\t-\sigma(\alpha_i)).
$$Therefore, (\ref{ab}) shows that
\begin{equation}\label{boundirred}
\dfrac{\#G}{m}v(h(\t))=\sum_{\sigma\in G}v(\t-\sigma(\beta))\le \sum_{\sigma\in G}v(\t-\sigma(\alpha_i))=\dfrac{\#G}{m_i}v(F_i(\t)).
\end{equation}
If  $\delta_F(h)< \mu_i$, then the inequality in (\ref{ab}) is strict, at least for $\sigma=1$; hence, the inequality in (\ref{boundirred}) is strict too. This proves item (i). 

Suppose now $\delta_F(h)\ge\mu_i$. Then, $v(\alpha_i-\beta)\ge\min\{v(\alpha_i-\t),v(\t-\beta)\}=\mu_i$, and we have directly, for all $\sigma\in G$: 
\begin{equation}\label{ab2}
v(\t-\sigma(\beta))\ge \min\{v(\t-\sigma(\alpha_i)),v(\sigma(\alpha_i)-\sigma(\beta))\}=v(\t-\sigma(\alpha_i)).
\end{equation} Therefore,
\begin{equation}\label{boundirred2}
\dfrac{\#G}{m}v(h(\t))=\sum_{\sigma\in G}v(\t-\sigma(\beta))\ge\sum_{\sigma\in G}v(\t-\sigma(\alpha_i))=\dfrac{\#G}{m_i}v(F_i(\t)).
\end{equation} The inequalities (\ref{boundirred}) and (\ref{boundirred2}) prove item (ii). Finally, if  $\delta_F(h)>\mu_i$, the inequality (\ref{ab2}) is strict at least for $\sigma=1$, and the inequality in (\ref{boundirred2}) is strict too. This proves item (iii). 
\end{proof}

The next result is a generalization of \cite[II,Prop.2]{Ok}.

\begin{thm}\label{PDP}
Let $m_1<\dots<m_r<\deg(F)$ be the Okutsu degrees of $F(x)$, and let  $F_1(x),\dots,F_r(x)\in\zpx$ be a family of 
monic polynomials with $\deg(F_i(x))=m_i$, for all $1\le i\le r$. Then, $[F_1,\dots,F_r]$ is an Okutsu frame of $F(x)$ if and only if each $F_i(x)$ is a divisor polynomial of $F(x)$ of degree $m_i$.
\end{thm}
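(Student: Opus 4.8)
The plan is to prove both implications by comparing, for each degree $m_i$, the valuation $v(F_i(\t))$ with the optimal value $v(g_{m_i}(\t))$ guaranteed by Proposition \ref{DP}, and to identify when equality holds using Lemma \ref{comparison}. Throughout, I would write $g_{m_i}$ for a fixed divisor polynomial of $F(x)$ of degree $m_i$, and recall that each $F_i$ in an Okutsu frame is the minimal polynomial of some $\alpha_i$ with $\deg_K\alpha_i=m_i$, $v(\t-\alpha_i)=\mu_i$.

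For the forward implication, suppose $[F_1,\dots,F_r]$ is an Okutsu frame. Fix $i$ and write $g=g_{m_i}$. Since $\deg g=m_i$, the only roots $\beta$ of any monic \emph{irreducible} factor of $g$ that could satisfy $v(\t-\beta)\ge\mu_i$ would have $\deg_K\beta\ge m_i$ by minimality of $m_i$, forcing that factor to be $g$ itself (degree $m_i$) and $g$ irreducible. In general $\delta_F(h)\le\mu_i$ for any monic irreducible $h$ of degree $\le m_i$ other than such an optimal one; applying Lemma \ref{comparison}(i)--(ii) factorwise to $g$ (splitting $g$ modulo nothing, just as a product of its $K$-irreducible factors $h_1\cdots h_t$ with $\sum\deg h_j=m_i$) gives
\begin{equation*}
v(g(\t))=\sum_{j=1}^t v(h_j(\t))\le \sum_{j=1}^t \frac{\deg h_j}{m_i}\,v(F_i(\t))=v(F_i(\t)),
\end{equation*}
with equality iff every factor $h_j$ has $\delta_F(h_j)=\mu_i$; but a factor of degree $<m_i$ cannot achieve $\delta_F=\mu_i$ (minimality of $m_i$), so equality forces $t=1$, $g=h_1$ irreducible of degree $m_i$, and $\delta_F(g)=\mu_i$. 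On the other hand $F_i$ itself is monic of degree $m_i$ with $v(F_i(\t))\ge v(h(\t))-v(h)$ contradicted unless... — more directly: $F_i$ is admissible in Proposition \ref{DP}, so $v(g_{m_i}(\t))\ge v(F_i(\t))$; and $v(F_i(\t))=\sum_\sigma v(\t-\sigma(\alpha_i))/[\cdots]$ which by the maximality built into $\mu_i$ already matches the upper bound from Lemma \ref{comparison}(ii) applied to $h=F_i$. Hence $v(g_{m_i}(\t))=v(F_i(\t))$, and the argument above then shows $F_i$ is \emph{itself} a divisor polynomial: it is monic of degree $m_i$ and attains the maximal valuation, which is exactly the defining property (via the reduction in the proof of Proposition \ref{DP} that it suffices to beat polynomials of valuation $0$).

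For the converse, suppose each $F_i$ is a divisor polynomial of degree $m_i$. I must show $F_i$ is the minimal polynomial of some $\alpha$ with $\deg_K\alpha=m_i$ and $v(\t-\alpha)=\mu_i$. First, using Lemma \ref{comparison} I would show $v(F_i(\t))=(m_i/m_i)v(F_i(\t))$ is forced to correspond to $\delta_F(F_i)=\mu_i$: if some irreducible factor $h$ of $F_i$ had $\delta_F(h)<\mu_i$ then $v(h(\t))<(\deg h/m_i)v(F_i(\t))$, and since no factor can have $\delta_F>\mu_i$ (that would make $v(F_i(\t))$ exceed the bound $v(g_{m_i}(\t))$ that $F_i$ as a divisor polynomial must not exceed — here I invoke the optimality characterization of $g_{m_i}$ together with Lemma \ref{comparison}(iii) applied to the true $\mu_i$-achieving polynomial), summing gives $v(F_i(\t))<v(F_i(\t))$, a contradiction. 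So $F_i$ is irreducible, $\delta_F(F_i)=\mu_i$, and a root $\alpha$ of $F_i$ with $v(\t-\alpha)=\mu_i$ has $\deg_K\alpha=m_i$; since the $m_i$ and $\mu_i$ are the intrinsic Okutsu degrees/valuations of $F$ (independent of frame), $[F_1,\dots,F_r]$ satisfies Definition \ref{frame}.

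The main obstacle is the bookkeeping that knits Proposition \ref{DP} together with Lemma \ref{comparison}: I need to know precisely that a divisor polynomial of degree $m_i$ achieves valuation \emph{exactly} $v(F_i(\t))$ (not more), which requires simultaneously (a) the lower bound $v(g_{m_i}(\t))\ge v(F_i(\t))$ from Proposition \ref{DP} since $F_i$ is an admissible competitor, and (b) the upper bound $v(g_{m_i}(\t))\le v(F_i(\t))$ obtained by decomposing $g_{m_i}$ into $K$-irreducible factors and bounding each via Lemma \ref{comparison}, noting that $\delta_F$ of a factor of degree $<m_i$ is $<\mu_i$ by minimality of $m_i$ while a factor of degree exactly $m_i$ gives the whole product. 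Handling possible repeated or low-degree factors cleanly, and being careful that the Okutsu data $m_i,\mu_i$ in the converse are the same intrinsic data, is where the care is needed; the rest is a direct translation between "maximal valuation at degree $m_i$" and "minimal polynomial of a best approximant $\alpha_i$."
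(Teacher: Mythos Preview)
Your approach is essentially the paper's: in both directions you factor a monic polynomial of degree $m_i$ into $K$-irreducible pieces and bound each piece via Lemma~\ref{comparison}, using the minimality of $m_i$ (factors of smaller degree have $\delta_F\le\mu_{i-1}<\mu_i$) and the maximality of $\mu_i$ (no factor of degree $\le m_i$ can have $\delta_F>\mu_i$). The forward direction is clean; your detour through $g_{m_i}$ and the two-sided inequality is unnecessary, since showing $v(g(\t))\le v(F_i(\t))$ for every monic $g$ of degree $m_i$ already makes $F_i$ a divisor polynomial.

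The one place where your write-up wobbles is the converse: when you write ``$v(h(\t))<(\deg h/m_i)\,v(F_i(\t))$'' you are invoking Lemma~\ref{comparison} as if $F_i$ were already known to sit in an Okutsu frame, which is exactly what you are trying to prove. The paper fixes this by first choosing an auxiliary genuine Okutsu frame $[F'_1,\dots,F'_r]$, observing that $v(F_i(\t))=v(F'_i(\t))$ because both are divisor polynomials of the same degree (using the forward direction just established for $F'_i$), and then applying Lemma~\ref{comparison} with $F'_i$ rather than $F_i$. You gesture at this with ``the true $\mu_i$-achieving polynomial,'' so you clearly see the issue; making the auxiliary frame explicit removes the apparent circularity and turns your sketch into the paper's proof.
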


\begin{proof}
Suppose that $[F_1,\dots,F_r]$ is an Okutsu frame of $F(x)$. For any index $1\le i\le r$, let
 $g(x)\in\zpx$ be a monic polynomial of degree $m_i$, and let
$g(x)=h_1(x)\cdots h_s(x)$ with $h_j(x)\in\zpx$ monic and irreducible. By the maximality of $\mu_i$, we have $\delta_F(h_j)\le \mu_i$, for all $j$. Lemma \ref{comparison} shows that $v(h_j(\t))\le (\deg(h_j)/m_i)v(F_i(\t))$, for all $j$; hence,
$$
v(g(\t))=\sum_jv(h_j(\t))\le \sum_j (\deg h_j/m_i) v(F_i(\t))=v(F_i(\t)).
$$Along the proof of Proposition \ref{DP} we saw that this property, for all monic $g(x)$ of degree $m_i$, implies already that $F_i(x)$ is a divisor polynomial of degree $m_i$.

Conversely, suppose that each $F_i(x)$ is a divisor polynomial of $F(x)$ of degree $m_i$, and let $[F'_1,\dots,F'_r]$ be an Okutsu frame of $F(x)$. Fix an index  $1\le i\le r$; clearly, $v(F_i(\t))=v(F'_i(\t))$, since $F_i(x)$ and $F'_i(x)$ are both divisor polynomials of degree $m_i$.   Let us see first that  $F_i(x)$ is necessarily irreducible. In fact, suppose $F_i(x)=h_1(x)\cdots h_s(x)$, with all $h_j(x)$ monic irreducible polynomials of degree less than $m_i$. Then, $\delta_F(h_j)\le \mu_{i-1}<\mu_i$, for all $j$, by the minimality of $m_i$. By Lemma \ref{comparison}, $v(h_j(\t))<(\deg(h_j)/m_i)v(F'_i(\t))$, for all $j$, and this implies $v(F_i(\t))<v(F'_i(\t))$, in contradiction with our assumption. Once we know that $F_i(x)$ is irreducible, 
 Lemma \ref{comparison} shows that $\delta_F(F_i)=\mu_i$. Thus, $[F_1,\dots,F_r]$ is an Okutsu frame of $F(x)$.  
\end{proof}

\begin{cor}\label{more}
The rational numbers $0<v(F_1(\t))<\cdots <v(F_r(\t))$, depend only on $F(x)$.\qed
\end{cor}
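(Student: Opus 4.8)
The statement is essentially a corollary of Theorem~\ref{PDP} combined with the observation that the value at $\t$ of a divisor polynomial of prescribed degree is unique. So the plan is: (1) show $v(g_m(\t))$ depends only on $(F,m)$; (2) invoke Theorem~\ref{PDP} to identify the $F_i$ with divisor polynomials; (3) extract the strict inequalities and the positivity from Lemma~\ref{comparison} and Corollary~\ref{phi1}.

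\textbf{Step 1: the value of a divisor polynomial is an invariant.} Fix $0\le m<n$ and let $g_m(x)\in\zpx$ be a divisor polynomial of degree $m$ furnished by Proposition~\ref{DP}; being monic, $v(g_m(x))=0$ in the sense of \eqref{v1}. If $\tilde g(x)\in\zpx$ is any monic polynomial of degree $m$ that also satisfies \eqref{vDP}, then applying \eqref{vDP} to $g_m$ with $g=\tilde g$ gives $v(g_m(\t))\ge v(\tilde g(\t))$, and applying \eqref{vDP} to $\tilde g$ with $g=g_m$ gives $v(\tilde g(\t))\ge v(g_m(\t))$; hence $v(\tilde g(\t))=v(g_m(\t))$. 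By the remark following Proposition~\ref{DP} this common value is moreover independent of the choice of root $\t$ of $F(x)$, so it is a genuine invariant of the pair $(F,m)$.

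\textbf{Step 2: the numbers $v(F_i(\t))$ depend only on $F$.} Let $[F_1,\dots,F_r]$ be an arbitrary Okutsu frame of $F(x)$, with Okutsu degrees $m_1<\cdots<m_r$, which depend only on $F(x)$. By Theorem~\ref{PDP} each $F_i(x)$ is a divisor polynomial of $F(x)$ of degree $m_i$; since $F_i$ is monic it satisfies \eqref{vDP}, so Step~1 applies and yields $v(F_i(\t))=v(g_{m_i}(\t))$, a quantity that does not depend on the chosen frame. This proves the independence assertion of the corollary.

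\textbf{Step 3: positivity and strict monotonicity.} It remains to justify $0<v(F_1(\t))<\cdots<v(F_r(\t))$ for one, hence every, frame. By Corollary~\ref{phi1}(ii), $F_1(x)$ is irreducible modulo $\m$ and divides $F(x)$ modulo $\m$, so Lemma~\ref{v0} gives $v(F_1(\t))>0$. For $1\le i<r$, the polynomial $F_{i+1}(x)$ is irreducible and each of its roots $\beta$ has $\deg_K\beta=m_{i+1}$; by the maximality defining $\mu_{i+1}$ and the fact that $\alpha_{i+1}$ realizes it, $\delta_F(F_{i+1})=\mu_{i+1}>\mu_i$. Lemma~\ref{comparison}(iii) applied with $h=F_{i+1}$ then gives $v(F_{i+1}(\t))>(m_{i+1}/m_i)\,v(F_i(\t))\ge v(F_i(\t))$, the last step using $m_{i+1}>m_i\ge 1$ and $v(F_i(\t))>0$.

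\textbf{Main obstacle.} There is no serious difficulty: the entire statement drops out of Theorem~\ref{PDP} once one notices in Step~1 that ``being a divisor polynomial of degree $m$'' pins down $v(g_m(\t))$. The only point needing a little attention is that the \emph{strict} inequalities cannot be read off the divisor‑polynomial inequality \eqref{vDP} (which only gives $\ge$); they must instead be obtained from Lemma~\ref{comparison}(iii) together with the strict increase of the $\mu_i$, as in Step~3.
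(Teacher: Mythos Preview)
Your proof is correct and matches the paper's intended approach: the corollary is stated with a bare \qed\ immediately after Theorem~\ref{PDP}, and the independence claim is already used verbatim inside that theorem's proof (``clearly, $v(F_i(\t))=v(F'_i(\t))$, since $F_i(x)$ and $F'_i(x)$ are both divisor polynomials of degree $m_i$''). Your Steps~1--2 simply spell this out, and your Step~3 supplies the routine justification of the positivity and strict inequalities via Lemma~\ref{v0}/Corollary~\ref{phi1} and Lemma~\ref{comparison}(iii), which the paper leaves implicit.
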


In Corollary \ref{vFi} below, we shall show how to compute these invariants solely in terms of the Okutsu frame $[F_1,\dots,F_r]$.

Okutsu refers to $F_1(x),\dots,F_r(x)$ as the \emph{primitive divisor polynomials} of $F(x)$, because by multiplying them in an adequate way one obtains the divisor polynomials of all degrees $0\le m<n$. 

\begin{thm}{\cite[II,Thm.1]{Ok}}\label{allm}
Let $[F_1,\dots,F_r]$ be an Okutsu frame of $F(x)$. Take $F_0(x)=x$. Let $0<m<n$, and write it (in a unique form) as
$$
m=\sum_{i=0}^ra_im_i,\quad \mbox{ with }\ 0\le a_i<\dfrac{m_{i+1}}{m_i}. 
$$ 
Then, $g_m(x):=\prod_{i=0}^rF_i(x)^{a_i}$ is a divisor polynomial of degree $m$ of $F(x)$.
\end{thm}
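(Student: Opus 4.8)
The plan is to prove the statement by induction on $m$, using the Base $m_0=1$ expansion together with Theorem \ref{PDP}, which reduces everything to the single inequality $v(g_m(\t))\ge v(g(\t))$ for monic $g(x)\in\zpx$ of degree $m$. Fix the expansion $m=\sum_{i=0}^r a_im_i$ with $0\le a_i<m_{i+1}/m_i$, and let $j$ be the largest index with $a_j>0$, so $m_j\le m<m_{j+1}$. The key reduction is that any monic $g(x)$ of degree $m$ can be written, by Euclidean division by $F_j(x)$, as $g(x)=\sum_{k\ge 0} c_k(x)F_j(x)^k$ with $\deg c_k<m_j$; since $\deg g<m_{j+1}$ and $\deg F_j=m_j$, at most $m_{j+1}/m_j$ terms occur, i.e.\ $0\le k<m_{j+1}/m_j$. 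The leading term ($k=a_j$) carries the factor $g_{m-a_jm_j}(x)F_j(x)^{a_j}$ as its ``optimal'' shape, and we want to show this term dominates the valuation.

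The main steps, in order: (1) Reduce to checking $v(g_m(\t))\ge v(g(\t))$ for monic $g$ of degree $m$, invoking the final paragraph of the proof of Proposition \ref{DP}. (2) Establish, using Lemma \ref{comparison}(ii) applied to each $F_i$ together with the multiplicativity $v(g_m(\t))=\sum_i a_i v(F_i(\t))$ and the relation $\delta_F(F_i)=\mu_i$, the exact value $v(g_m(\t))=\sum_{i=0}^r a_i v(F_i(\t))$ (with $v(F_0(\t))=v(\t)$ possibly $0$, in which case $m_1=1$ and $a_0=0$ by the constraint, so this term is harmless); more precisely one shows $v(g_m(\t))$ equals the value predicted by the Newton-polygon/Okutsu data. (3) For an arbitrary monic $g$ of degree $m$, perform the $F_j$-adic expansion $g=\sum_{0\le k<m_{j+1}/m_j} c_k(x)F_j(x)^k$ and bound $v(c_k(\t))$ for each $k$: since $\deg c_k<m_j$, by the divisor-polynomial property at degree $\deg c_k$ we get $v(c_k(\t))\le v(g_{\deg c_k}(\t))$, which by the inductive hypothesis equals the value attached to the Base-$m_0$ expansion of $\deg c_k$. (4) Combine: $v\!\left(c_k(\t)F_j(\t)^k\right)\le v(g_{\deg c_k + km_j}(\t))\le v(g_m(\t))$, the last inequality because the Okutsu valuations grow fast enough that the expansion with leading index $j$ and coefficient $a_j$ is maximal among all $(\deg c_k, k)$ with $\deg c_k+km_j\le m$; hence $v(g(\t))\le v(g_m(\t))$, as required.

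The delicate point — the main obstacle — is Step (4): showing that the ``greedy'' exponent vector $(a_0,\dots,a_r)$ genuinely maximizes $\sum_i b_i v(F_i(\t))$ over all admissible vectors $(b_i)$ with $\sum_i b_im_i\le m$ and $0\le b_i<m_{i+1}/m_i$ for $i<j$. This is a combinatorial convexity fact about the sequence $v(F_i(\t))$: one must check $v(F_{i+1}(\t)) \ge (m_{i+1}/m_i)\,v(F_i(\t))$, i.e.\ that replacing $m_{i+1}/m_i$ copies of the block $F_i$ by one copy of $F_{i+1}$ never decreases the valuation. This follows from Lemma \ref{comparison}: applying (iii) (or (ii)) with $h=F_{i+1}$, $\delta_F(F_{i+1})=\mu_{i+1}>\mu_i$ gives $v(F_{i+1}(\t)) > (m_{i+1}/m_i)v(F_i(\t))$. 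With this inequality in hand, a routine exchange argument shows the greedy vector is optimal, closing Step (4). The remaining verifications (uniqueness of the expansion, the bookkeeping in Step (2)–(3), the $F_0=x$ edge case when $v(\t)=0$) are mechanical.
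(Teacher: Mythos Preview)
Your Step (4) contains a genuine logical gap that breaks the argument. You establish that each term of the $F_j$-adic expansion satisfies $v\bigl(c_k(\t)F_j(\t)^k\bigr)\le v(g_m(\t))$, and then conclude $v(g(\t))\le v(g_m(\t))$. But the ultrametric inequality goes the other way: from $g(\t)=\sum_k c_k(\t)F_j(\t)^k$ one only gets $v(g(\t))\ge \min_k v\bigl(c_k(\t)F_j(\t)^k\bigr)$, which yields no upper bound on $v(g(\t))$. Cancellation among the terms could, a priori, make $v(g(\t))$ arbitrarily large; nothing in your bounds on the individual terms prevents this. The ``combinatorial convexity'' you flag as the delicate point is therefore beside the issue: even granting that the greedy vector maximizes $\sum_i b_i\,v(F_i(\t))$, the passage from termwise bounds to a bound on the sum is simply invalid.

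The paper's proof circumvents exactly this obstacle by a device you are missing. Instead of the full $F_j$-adic expansion, it performs a \emph{single} Euclidean division $g(x)=F_j(x)q(x)+r(x)$ with $\deg r<m_j$, and then uses evaluation at a root $\alpha_j$ of $F_j$: first one shows $v(\t-\eta)\le v(\alpha_j-\eta)$ for every root $\eta$ of $g$, whence $v(g(\t))\le v(g(\alpha_j))$; since $F_j(\alpha_j)=0$ this equals $v(r(\alpha_j))$, and Lemma~\ref{taylor} gives $v(r(\alpha_j))=v(r(\t))$. The resulting inequality $v(g(\t))\le v(r(\t))$ is the crucial input: it forces $v(F_j(\t)q(\t))\ge\min\{v(g(\t)),v(r(\t))\}=v(g(\t))$, and now a single application of the inductive hypothesis to $q(x)$ (of degree $m-m_j$) finishes the proof. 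Your proposal has no analogue of this evaluation-at-$\alpha_j$ step, and without it the induction does not close.
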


\begin{proof}
As we saw in the proof of Proposition \ref{DP}, it is sufficient to show that $v(g_m(\t))\ge v(g(\t))$ for all $g(x)\in\zpx$ monic of degree $m$. 
 Let us prove this by induction on $m$. 

If $m<m_1$, then $g_m(x)=x^m$ and $v(g_m(\t))=0=v(g(\t))$, for all $g(x)$ monic of degree $m$, by Lemma \ref{v0} and Corollary \ref{phi1}. If $m=m_1$ then $g_m(x)=F_1(x)$, which is a divisor polynomial of degree $m_1$ of $F(x)$ by Theorem \ref{PDP}. Thus, the theorem is proven for all $0<m\le m_1$.

Let $m_j\le m<m_{j+1}$, for some $1\le j\le r$, and suppose that the theorem is true for all degrees less than $m$.   
We claim that $v(\t-\eta)\le v(\alpha_j-\eta)$, for any root $\eta\in\qpb$ of  any monic polynomial $g(x)\in\zpx$ of degree $m$. In fact, $v(\t-\eta)\le\mu_j$, by the minimality of $m_{j+1}$, and
$$
\begin{array}{l}
v(\t-\eta)<\mu_j\,\Longrightarrow\,v(\alpha_j-\eta)=\min\{v(\t-\eta),v(\t-\alpha_j)\}=v(\t-\eta),\\
v(\t-\eta)=\mu_j\,\Longrightarrow\,v(\alpha_j-\eta)\ge\min\{v(\t-\eta),v(\t-\alpha_j)\}=\mu_j.
\end{array}
$$
Hence, $v(g(\t))\le v(g(\alpha_j))$. Consider now
$$
g(x)=F_j(x)q(x)+r(x),\quad \deg (r(x))<m_j.
$$
Clearly, $v(g(\t))\le v(g(\alpha_j))=v(r(\alpha_j))=v(r(\t))$,
the last equality by Lemma \ref{taylor} applied to $\alpha=\alpha_j$, $\eta=\t$. Since $\deg(q(x))=m-m_j$, we have
$$v(q(\t))\le v\left(F_j(\t)^{a_j-1}\prod_{k=0}^{j-1}F_k(\t)^{a_k}\right),$$ by the induction hypothesis; 
hence
 $$v\left(\prod_{i=0}^rF_i(\t)^{a_i}\right) \ge v(F_j(\t)q(\t))\ge 
\min\{ v(g(\t)),v(r(\t))\}=v(g(\t)).$$
\end{proof}

\section{Okutsu frames and Newton polygons of higher order} 
In this section we study more properties of Okutsu frames in connection with the theory of Newton polygons of higher order. These polygons were introduced in \cite{montes}, and revised in \cite{HN} (HN standing for ``higher Newton"). 

We keep dealing with a local field $K$ of characteristic zero, and we keep the notations $\oo$, $\m$, $\pi$, $\ff{}$ and $v$, of the previous section. Also, we keep fixing a monic irreducible polynomial $F(x)\in\zpx$ of degree $n$, a root $\t\in\qpb$ of $F(x)$, and the field $L=K(\t)$.

The aim of this section is to characterize when a chain $[F_1,\dots,F_r]$ of monic irreducible polynomials is an Okutsu frame of $F(x)$ in terms of invariants linked to certain Newton polygons (Theorems \ref{niam} and \ref{main}). As a consequence, we give a closed formula for the invariants $v(F_i(\t))$ solely in terms of the Okutsu frame (Corollary \ref{vFi}), and we find
new Okutsu invariants of $F(x)$ (Corollary \ref{newinvariants}). \medskip

\noindent{\bf Notation. }{\it
Let $\mathcal{F}$ be a field and $\varphi(y),\,\psi(y)\in \mathcal{F}[y]$ two polynomials. We write $\varphi(y)\sim \psi(y)$ to indicate that  there exists a constant $c\in\mathcal{F}^*$ such that $\varphi(y)=c\psi(y)$}.\medskip

\subsection{Newton polygons of higher order}
Let us briefly review Newton polygons of higher order \cite[Sects.1,2]{HN}. We denote by $v_1$ the discrete va\-luation on $K(x)$ determined by the natural extension of $v$ to polynomials given in (\ref{v1}). Consider the $0$-th \emph{residual polynomial} operator
$$
R_0\colon \zpx\lra \ff{}[y],\quad g(x)\mapsto \rd\left(g(y)/\pi^{v_1(g)}\right), 
$$
where, $\rd\colon \oo[y]\to \ff{}[y]$, is the natural reduction map.
A \emph{type of order zero}, $\ty=\psi_0(y)$, is just a monic irreducible polynomial $\psi_0(y)\in\ff{}[y]$. A \emph{representative} of $\ty$ is any monic polynomial $\phi_1(x)\in\zpx$ such that $R_0(\phi_1)=\psi_0$. The pair $(\phi_1,v_1)$ can be used to attach a Newton polygon to any nonzero polynomial $g(x)\in K[x]$. If $g(x)=\sum_{i\ge0}a_i(x)\phi_1(x)^i$ is the $\phi_1$-adic development of $g(x)$, then 
$N_1(g):=N_{\phi_1,v_1}(g)$ is the lower convex envelope of the set of points of the plane with coordinates $(i,v_1(a_i(x)\phi_1(x)^i))$ \cite[Sec.1]{HN}. 

Let $\lambda_1\in\Q^-$ be a negative rational number, $\lambda_1=-h_1/e_1$, with $h_1,e_1$ po\-sitive coprime integers. The triple $(\phi_1,v_1,\lambda_1)$ determines a discrete valuation $v_2$ on $K(x)$, constructed as follows: for any  nonzero polynomial $g(x)\in\zpx$, take a line of slope $\lambda_1$ far below $N_1(g)$ and let it shift upwards till it touches the polygon for the first time; if $H$ is the ordinate at the origin of this line, then $v_2(g(x))=e_1 H$ by definition. Also, the triple $(\phi_1,v_1,\lambda_1)$ determines a \emph{residual polynomial}  operator
$$
R_1:=R_{\phi_1,v_1,\lambda_1}\colon \zpx\lra \ff1[y],\quad \ff1:=\zpx/(\m[x],\phi_1(x))
$$
which is a kind of reduction of first order of $g(x)$ \cite[Def.1.9]{HN}. 

Let $\psi_1(y)\in\ff1[y]$ be a monic irreducible polynomial, $\psi_1(y)\ne y$. The triple $\ty=(\phi_1(x);\lambda_1,\psi_1(y))$ is called a \emph{type of order one}. Given any such type, one can compute a representative of $\ty$; that is, a monic polynomial $\phi_2(x)\in\zpx$ satisfying
$R_1(\phi_2)(y)\sim\psi_1(y)$. 

Now we may start over and repeat all constructions in order two. The pair $(\phi_2,v_2)$
can be used to attach a Newton polygon $N_2(g):=N_{\phi_2,v_2}(g)$ to any nonzero polynomial $g(x)\in K[x]$. This polygon is constructed from the $\phi_2$-adic development $g(x)=\sum_{i\ge0}b_i(x)\phi_2(x)^i$, as the lower convex envelope of the set of points of the plane with coordinates $(i,v_2(b_i(x)\phi_2(x)^i))$. 
For any negative rational number $\lambda_2$, the triple $(\phi_2,v_2,\lambda_2)$ determines a discrete valuation $v_3$ on $K(x)$ and a  \emph{residual polynomial} operator
$$
R_2:=R_{\phi_2,v_2,\lambda_2}\colon \zpx\lra \ff2[y],\quad \ff2:=\ff1[y]/(\psi_1(y)).
$$

The iteration of this procedure leads to the concept of \emph{type of order $r$}. A type of order $r\ge 1$ is a chain:
$$
\ty=(\phi_1(x);\lambda_1,\phi_2(x);\cdots;\lambda_{r-1},\phi_r(x);\lambda_r,\psi_r(y)),
$$
where $\phi_1(x),\dots,\phi_r(x)$ are monic irreducible polynomials in $\zpx$, $\lambda_1,\dots,\lambda_r$ are negative rational numbers, and $\psi_r(y)$ is a polynomial over certain finite field $\ff{r}$ (to be specified below), that satisfy the following recursive properties:
\begin{enumerate}
\item $\phi_1(x)$ is irreducible modulo $\m$.  We denote $\psi_0(y):=R_0(\phi_1)(y)\in \ff{}[y]$, and we define $\ff1=\ff{}[y]/(\psi_0(y))$.
\item For all $1\le i<r$, $N_i(\phi_{i+1}):=N_{\phi_i,v_i}(\phi_{i+1})$ is one-sided of slope $\lambda_i$, and $R_i(\phi_{i+1})(y):=R_{\phi_i,v_i,\lambda_i}(\phi_{i+1})(y)\sim \psi_i(y)$, for some monic irreducible polynomial $\psi_i(y)\in \ff{i}[y]$. We define $\ff{i+1}=\ff{i}[y]/(\psi_i(y))$.
\item $\psi_r(y)\in\ff{r}[y]$ is a monic irreducible polynomial, $\psi_r(y)\ne y$. 
% We define $\ff{r+1}:=\ff{r}[y]/(\psi_r(y))$.
\end{enumerate}

We attach to any type $\ty$ of order $r$ certain invariants: $e_1,\dots,e_r$, $h_1,\dots,h_r$, $f_0,f_1,\dots,f_r$.
They are defined as follows: $\lambda_i=-h_i/e_i$, with $e_i,h_i$ positive coprime integers, and $f_i=\deg(\psi_i(y))$. In the sequel we shall extensively use these numerical invariants without any mention to the underlying type $\ty$, that will be usually implicit in the context.

For any nonzero $g(x)\in K[x]$, we denote by $N_i^-(g)$ the union of the sides of negative slope of $N_i(g)$. We say that $N_i^-(g)$ is the \emph{principal part} of $N_i(g)$. The \emph{length} of either of these two polygons is by definition the length of its projection to the horizontal axis.

At each order $1,2,\dots,r$, three fundamentals theorems provide a far-reaching generalization of Hensel lemma, that had been worked out by Ore in order one \cite{ore1}: the theorems of the product \cite[Thm.2.26]{HN}, of the polygon \cite[Thm.3.1]{HN}, and of the residual polynomial \cite[Thm.3.7]{HN}. Let us only mention the following facts, which are an immediate consequence of these results.

\begin{proposition}\label{onesided}
Let $\ty$ be a type of order $r\ge 1$, and let $g(x)\in\zpx$ be a nonzero polynomial. Then, for any $1\le i\le r$:
\begin{enumerate}
\item[(i)] If $N_i^-(g)$ is reduced to a point or it is one-sided of slope $\lambda\ne\lambda_i$, then $R_i(g)(y)$ is a constant in $\ff{i}$.
\item[(ii)] If $N_i^-(g)$ is not reduced to a point and $g(x)$ is irreducible, then $N_i(g)=N_i^-(g)$ is one-sided and  $N_j(g)=N_j^-(g)$ is one-sided of slope $\lambda_j$, for all $1\le j<i$. Moreover, if $\beta\in\qpb$ is a root of $g(x)$, then the Theorem of the polygon shows that
\begin{equation}\label{thpolygon}
v(\phi_i(\beta))=\dfrac{|\lambda|+v_i(\phi_i)}{e_1\cdots e_{i-1}},
\end{equation}
where $\lambda$ is the slope of $N_i^-(g)$. 
\item[(iii)]  If $N_i(g)$ is one-sided of slope $\lambda_i$, then  $\deg(R_i(g))=\deg(g)/e_i\deg(\phi_i)$. 
If moreover $g(x)$ is irreducible, then $R_i(g)(y)$ is the power of an irreducible polynomial in $\ff{i}[y]$. 
\item[(iv)] If $i<r$, then $N_{i+1}^-(g)$ has length equal to $\operatorname{ord}_{\psi_i(y)}(R_i(g)(y))$.
\end{enumerate}
\end{proposition}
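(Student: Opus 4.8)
The plan is to derive all four items directly from the three fundamental theorems of \cite{HN} quoted above --- the theorem of the product \cite[Thm.2.26]{HN}, the theorem of the polygon \cite[Thm.3.1]{HN}, and the theorem of the residual polynomial \cite[Thm.3.7]{HN} --- the only extra ingredient being elementary bookkeeping among the numerical invariants $e_j,h_j,f_j$ of $\ty$. Everything rests on two structural facts. First, by construction $R_i(g)=R_{\phi_i,v_i,\lambda_i}(g)$ is assembled from the coefficients of the $\phi_i$-adic development of $g$ indexed by the $e_i$-th lattice points of the line of slope $\lambda_i$ supporting $N_i(g)$; hence $R_i(g)$ is non-constant exactly when $N_i(g)$ has a side of slope $\lambda_i$, and then $\deg R_i(g)$ is $1/e_i$ times the horizontal length of that side. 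Second, $R_i$ is multiplicative up to nonzero constants, $R_i(gh)\sim R_i(g)R_i(h)$ and $N_i^-(gh)=N_i^-(g)+N_i^-(h)$, while the theorem of the polygon describes how the sides of $N_i(g)$ govern the factorization of $g$ and the values $v(\phi_i(\beta))$ at its roots.

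Item (i) is then immediate from the first fact: if $N_i^-(g)$ is a single point, or a single side of slope $\lambda\neq\lambda_i$, a support line of slope $\lambda_i$ meets $N_i(g)$ in exactly one vertex, so $R_i(g)$ is the reduction of one coefficient, nonzero since $g\neq 0$, hence a constant of $\ff{i}^{*}$. For item (iii), when $N_i(g)=N_i^-(g)$ is one-sided of slope $\lambda_i=-h_i/e_i$ its integral vertices make its horizontal length equal to $\deg(g)/\deg(\phi_i)$ and a multiple of $e_i$, so the first fact yields $\deg R_i(g)=\deg(g)/(e_i\deg\phi_i)$; and if moreover $g$ is irreducible, the theorem of the residual polynomial shows $R_i(g)$ is, up to a constant, a power of a single irreducible polynomial of $\ff{i}[y]$, because an irreducible $g$ contributes only one irreducible factor to $R_i(g)$. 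Item (iv) I would read off as the link between consecutive orders: the theorem of the residual polynomial identifies the $\psi_i$-primary part of $R_i(g)$ with the contribution of the type prolonged to order $i+1$, and then the theorem of the polygon at order $i+1$ equates the horizontal length of $N_{i+1}^-(g)$ with $\operatorname{ord}_{\psi_i(y)}(R_i(g)(y))$; this needs no hypothesis on $g$ beyond $g\in\zpx$, $g\neq 0$.

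Item (ii) is where the real work lies, and I would prove it by a downward induction. Assume $g$ is irreducible with $N_i^-(g)$ of positive length. By the theorem of the polygon an irreducible polynomial cannot have a Newton polygon with two sides, so $N_i(g)=N_i^-(g)$ is one-sided; its slope must be $\lambda_i$, since otherwise item (i) would make $R_i(g)$ constant, whereas item (iv) applied at order $i-1$ forces $\psi_{i-1}\mid R_{i-1}(g)$, hence $R_{i-1}(g)$ non-constant, and then $N_{i-1}^-(g)$ is a non-trivial side. Running the same reasoning at order $i-1$ shows $N_{i-1}(g)=N_{i-1}^-(g)$ is one-sided of slope $\lambda_{i-1}$; iterating the descent down to order $1$ gives $N_j(g)=N_j^-(g)$ one-sided of slope $\lambda_j$ for every $1\le j<i$. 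Finally, formula (\ref{thpolygon}) is exactly the conclusion of the theorem of the polygon at order $i$: for a root $\beta$ of $g$, the slope $\lambda$ of $N_i^-(g)$ together with $v_i(\phi_i)$ determines the $v_i$-value of $\phi_i(\beta)$, and rescaling from $v_i$ to $v$ introduces the denominator $e_1\cdots e_{i-1}$.

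I expect the main obstacle to be precisely this propagation step in item (ii): one must be sure that a non-trivial order-$i$ polygon of an irreducible polynomial really does force one-sidedness and the exact slopes $\lambda_j$ all the way back to order $1$, which is where the recursive definition of a type and the theorem of the residual polynomial at each intermediate order have to be invoked with care. Once this descent is in place, the remaining assertions fall out of the quoted theorems together with the arithmetic of the invariants $e_j$ and $f_j$, with no genuinely new computation.
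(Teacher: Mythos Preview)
The paper does not actually prove this proposition: it is stated as a list of facts that are ``an immediate consequence'' of the theorems of the product, the polygon, and the residual polynomial from \cite{HN}. Your proposal is therefore not competing with a proof in the paper but rather fleshing out the intended derivation, and in that respect your overall plan is the right one and items (i), (iii), (iv) are handled correctly.

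There is, however, a genuine slip in your treatment of item (ii). You assert that the slope of $N_i(g)$ itself must equal $\lambda_i$, arguing that otherwise item (i) would force $R_i(g)$ to be constant, ``whereas item (iv) applied at order $i-1$ forces $\psi_{i-1}\mid R_{i-1}(g)$''. This is a non sequitur: the ``whereas'' clause only shows that $R_{i-1}(g)$ is non-constant, which says nothing about $R_i(g)$ and so provides no contradiction. More importantly, the proposition does \emph{not} claim that the slope of $N_i(g)$ equals $\lambda_i$; it only names that slope $\lambda$ and uses it in formula~(\ref{thpolygon}). The claim you are trying to prove is simply false in general (for $i=r$ one has no control from above), so this sentence should be deleted rather than repaired.

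Once that spurious claim is removed, your descent argument is essentially correct, but the logic should be reorganized. From the hypothesis that $N_i^-(g)$ has positive length and $g$ is irreducible, the theorem of the polygon gives that $N_i(g)=N_i^-(g)$ is one-sided (of some slope $\lambda$). Now apply item (iv) with $i$ replaced by $i-1$: the length of $N_i^-(g)$ equals $\operatorname{ord}_{\psi_{i-1}}(R_{i-1}(g))$, which is therefore positive, so $R_{i-1}(g)$ is non-constant. By the contrapositive of item (i), $N_{i-1}^-(g)$ is neither a point nor a side of slope $\ne\lambda_{i-1}$; since $g$ is irreducible, $N_{i-1}(g)=N_{i-1}^-(g)$ is one-sided, hence of slope exactly $\lambda_{i-1}$. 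Iterating this step for $j=i-1,i-2,\dots,1$ yields the assertion for all $j<i$. This is what you intended, and it is sound once the misplaced claim about level $i$ is dropped.
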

 
\begin{defin}\label{defs}
 Let $f(x)\in\zpx$ be a monic separable polynomial, and $\ty$ a type of order $r\ge 1$.

(1) \ We say that $\ty$ \emph{divides} $f(x)$, if $\psi_r(y)$ divides $R_r(f)(y)$ in $\ff{r}[y]$.

(2) \ We say that $\ty$ is \emph{$f$-complete} if $\operatorname{ord}_{\psi_r}(R_r(f))=1$. In this case, $\ty$ singles out an irreducible factor of $f(x)$ in $\zpx$. This factor is denoted $f_\ty(x)$; it has degree $e_rf_r\deg(\phi_r)$, and it is uniquely determined by the property  $R_r(f_\ty)(y)\sim\psi_r(y)$. 

(3) \ A \emph{representative} of $\ty$ is a monic polynomial $\phi_{r+1}(x)\in\zpx$, of mi\-nimum degree having the property that $\ty$ is $\phi_{r+1}$-complete. This polynomial is necessarily irreducible in $\zpx$. Note that, by the definition of a type, each $\phi_i(x)$ is a representative of the truncated type of order $i-1$
$$
\ty_{i-1}:=(\phi_1(x);\lambda_1,\phi_2(x);\cdots;\lambda_{i-2},\phi_{i-1}(x);\lambda_{i-1},\psi_{i-1}(y)).
$$
In \cite[Sect.2.3]{HN} we found an explicit and efficient procedure to compute representatives of arbitrary types. 

(4) \ We say that $\ty$ is \emph{optimal} if $\deg(\phi_1)<\cdots<\deg(\phi_r)$, or equivalently, $e_if_i>1$, for all $1\le i< r$.

We say that $\ty$ is \emph{strongly optimal} if $\ty$ is optimal and $e_rf_r>1$.  

We convene that all types of order zero are strongly optimal. 
\end{defin}

We finish this section with a proposition, extracted from \cite[Prop.3.5]{HN}, which plays an essential role in what follows, and an auxiliary lemma.

\begin{proposition}\label{vgt}
Let $F(x)\in\zpx$ be a monic irreducible polynomial, $\t\in\qpb$ a root of $F(x)$, and $\,\ty=(\phi_1(x);\lambda_1,\phi_2(x);\cdots;\lambda_{r-1},\phi_r(x);\lambda_r,\psi_r(y))$ a type of order $r$ dividing $F(x)$.  
Let $g(x)\in\zpx$ be a nonzero polynomial. Take a line of slope $\lambda_r$ far below $N_r(g)$, and let it shift upwards till it touches the polygon for the first time. Let $H$ be the ordinate at the origin of this line. Then,  
$v(g(\t))\ge H/e_1\cdots e_{r-1}$, and equality holds if and only if $\ty$ does not divide $g(x)$.
\end{proposition}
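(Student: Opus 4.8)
The plan is to induct on the order $r$, working with the $\phi_r$-adic development $g(x)=\sum_{i\ge0}b_i(x)\phi_r(x)^i$, where $\deg b_i<\deg\phi_r$. Writing $u_i:=v_r\big(b_i(x)\phi_r(x)^i\big)$, the polygon $N_r(g)$ is the lower convex hull of the points $(i,u_i)$, and by the very definition of $v_{r+1}$ the ordinate at the origin of its supporting line of slope $\lambda_r$ is $H=\min_i\big(u_i-i\lambda_r\big)$. So I would reduce the statement to two ingredients: (a) an estimate of $v\big(b_i(\t)\phi_r(\t)^i\big)$ for each $i$, and (b) a criterion for when the terms of minimal value cancel in $g(\t)=\sum_i b_i(\t)\phi_r(\t)^i$.

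First I would evaluate the two building blocks at $\t$. Since $\ty$ divides $F$ and $F$ is irreducible, $R_r(F)$ is non-constant, so Proposition~\ref{onesided}(i),(ii) force $N_j(F)=N_j^-(F)$ to be one-sided of slope $\lambda_j$ for all $j\le r$; feeding $i=r$ into (\ref{thpolygon}), writing $\lambda_r=-h_r/e_r$, and comparing with the definition of $v_{r+1}$ yields the clean identity $v(\phi_r(\t))=v_{r+1}(\phi_r)/(e_1\cdots e_r)$. For the coefficients I would argue that $\deg b_i<\deg\phi_r=e_{r-1}f_{r-1}\deg\phi_{r-1}$ forces the truncated type $\ty_{r-1}$ \emph{not} to divide $b_i$ (otherwise $\psi_{r-1}$ would divide $R_{r-1}(b_i)$, whereas $\deg R_{r-1}(b_i)\le \deg(b_i)/(e_{r-1}\deg\phi_{r-1})<f_{r-1}=\deg\psi_{r-1}$), and that every truncated type $\ty_j$ of $\ty$ still divides $F$. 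Then, up to the factor $e_{r-1}$ relating $v_r$ to the order-$(r-1)$ ordinate functional, the induction hypothesis in order $r-1$ applied to $b_i$ (the base case, order $0$, being just Lemma~\ref{v0}) gives the \emph{equality} $v(b_i(\t))=v_r(b_i)/(e_1\cdots e_{r-1})$. Combining the two, $v\big(b_i(\t)\phi_r(\t)^i\big)=(u_i-i\lambda_r)/(e_1\cdots e_{r-1})$ for every $i$, and therefore
$$
v(g(\t))\ \ge\ \min_i v\big(b_i(\t)\phi_r(\t)^i\big)\ =\ \frac{\min_i(u_i-i\lambda_r)}{e_1\cdots e_{r-1}}\ =\ \frac{H}{e_1\cdots e_{r-1}},
$$
which is the claimed inequality.

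Then I would settle the equality condition. Let $S=\{\,i:u_i-i\lambda_r=H\,\}$; the displayed inequality can be strict only if the (equal, minimal) values of the terms $b_i(\t)\phi_r(\t)^i$ with $i\in S$ cancel. If $N_r(g)$ has no side of slope $\lambda_r$, then $\#S=1$, no cancellation is possible, and at the same time $R_r(g)$ is a nonzero constant by Proposition~\ref{onesided}(i), so ``$\ty$ divides $g$'' is also false and the equivalence holds vacuously. If $N_r(g)$ does have a side of slope $\lambda_r$, I would invoke the order-$r$ Theorem of the residual polynomial: after dividing by an element of value $H/(e_1\cdots e_{r-1})$, the residue of $\sum_{i\in S}b_i(\t)\phi_r(\t)^i$ equals a nonzero scalar multiple of $R_r(g)(\xi)$, where $\xi$ is the residue attached to $\t$ by the type $\ty$; and precisely because $\ty$ divides $F$ and $F(\t)=0$, this $\xi$ is a root of $\psi_r(y)$. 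Hence cancellation occurs $\iff R_r(g)(\xi)=0\iff\psi_r(y)\mid R_r(g)(y)\iff\ty$ divides $g$, which finishes the argument.

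The hard part is this last step: identifying the sum of the leading parts of the minimal $\phi_r$-adic pieces of $g(\t)$ with $R_r(g)$ evaluated at the residue of $\t$ is exactly the substance of the order-$r$ fundamental theorems of \cite{HN} (the theorems of the polygon and of the residual polynomial), and it is what makes the present proposition essentially a repackaging of \cite[Prop.3.5]{HN}. Everything else --- the Newton-polygon bookkeeping and the induction on $r$ --- is routine once the order-$0$ base case, governed by Lemma~\ref{v0}, is in place.
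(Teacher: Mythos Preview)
The paper does not give its own proof of this proposition; it is quoted verbatim as an extract from \cite[Prop.3.5]{HN}. Your argument is a correct reconstruction of that result along the expected lines: the induction on $r$ through the $\phi_r$-adic development, the exact evaluation $v(b_i(\t))=v_r(b_i)/(e_1\cdots e_{r-1})$ obtained from the inductive hypothesis once you observe that $\ty_{r-1}$ cannot divide $b_i$ for degree reasons, the identity $v\big(b_i(\t)\phi_r(\t)^i\big)=(u_i-i\lambda_r)/(e_1\cdots e_{r-1})$ via (\ref{thpolygon}), and finally the appeal to the Theorem of the residual polynomial to decide whether the minimal terms cancel --- this is precisely the skeleton of the proof in \cite{HN}, and you have identified the one genuinely nontrivial input correctly.
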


\begin{lem}\label{aux2}
Let $F(x)\in\zpx$ be a monic irreducible polynomial, and let $\ty=(\phi_1(x);\lambda_1,\phi_2(x);\cdots;\lambda_{r-1},\phi_r(x);\lambda_r,\psi_r(y))$ be a type of order $r\ge 1$ dividing $F(x)$. Let $\phi_{r+1}(x)$  a representative of $\ty$, and let $\lambda_{r+1}=-h_{r+1}/e_{r+1}$ be the slope of $N_{r+1}(F)$, where $h_{r+1},e_{r+1}$ are positive coprime integers.
Then, $$v(\phi_{r+1}(\t))=\dfrac{\deg(\phi_{r+1})}{\deg(\phi_r)}\,v(\phi_r(\t))+\dfrac{h_{r+1}}{e_1\cdots e_{r+1}}.
$$ 
\end{lem}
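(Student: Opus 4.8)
The plan is to compute $v(\phi_{r+1}(\t))$ via Proposition \ref{vgt}, applied to the polynomial $g(x)=\phi_{r+1}(x)$ and the type $\ty$ of order $r$ that divides $F(x)$. Since $\phi_{r+1}$ is a representative of $\ty$, it is $\ty$-complete, so in particular $\ty$ divides $\phi_{r+1}$; hence the equality case of Proposition \ref{vgt} does \emph{not} apply directly to $\phi_{r+1}$ itself. Instead I would work one order higher: $\ty$ together with $\lambda_{r+1}$ and the irreducible residual polynomial $\psi_{r+1}(y):=R_{r+1}(F)(y)$ (which is a power of an irreducible by Proposition \ref{onesided}(iii), and equals that irreducible up to $\sim$ when one truncates suitably) forms a type $\ty'$ of order $r+1$ dividing $F(x)$, with $\phi_{r+1}$ as the representative $\phi_{r+1}$ at level $r+1$. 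The key structural input is that $N_{r+1}(F)$ is one-sided of slope $\lambda_{r+1}=-h_{r+1}/e_{r+1}$ — this is part of the hypothesis, since $F$ is irreducible and $\ty$ divides it, so by Proposition \ref{onesided}(ii) the relevant polygon is one-sided.

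The main computation is then to evaluate $H$, the ordinate at the origin of the line of slope $\lambda_{r+1}$ supporting $N_{r+1}(\phi_{r+1})$. By the Theorem of the polygon (the formula \eqref{thpolygon} in Proposition \ref{onesided}(ii), now read at order $r+1$ with $g=F$), for a root $\t$ of $F$ one has
$$
v(\phi_{r+1}(\t))=\frac{|\lambda_{r+1}|+v_{r+1}(\phi_{r+1})}{e_1\cdots e_r}
=\frac{h_{r+1}/e_{r+1}+v_{r+1}(\phi_{r+1})}{e_1\cdots e_r}
=\frac{h_{r+1}}{e_1\cdots e_{r+1}}+\frac{v_{r+1}(\phi_{r+1})}{e_1\cdots e_r}.
$$
So the whole lemma reduces to the identity
$$
\frac{v_{r+1}(\phi_{r+1})}{e_1\cdots e_r}=\frac{\deg(\phi_{r+1})}{\deg(\phi_r)}\,v(\phi_r(\t)).
$$
To get this I would unwind the definition of the valuation $v_{r+1}$: since $\phi_{r+1}$ is a representative of $\ty$, its $\phi_r$-adic Newton polygon $N_r(\phi_{r+1})$ is one-sided of slope $\lambda_r$, of length $\deg(\phi_{r+1})/(e_rf_r\deg(\phi_r))$ (this is the degree bookkeeping from Definition \ref{defs}(3) combined with $f_r=\deg\psi_r$). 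Tracking the leading $\phi_r$-adic coefficient, which is a power of $\phi_r$, through the definition of $v_{r+1}$ yields $v_{r+1}(\phi_{r+1})=\bigl(\deg(\phi_{r+1})/\deg(\phi_r)\bigr)\,v_{r+1}(\phi_r)$, using that $v_{r+1}$ is a valuation and is homogeneous of the expected weight on powers of $\phi_r$. Finally $v_{r+1}(\phi_r)$ relates to $v(\phi_r(\t))$ by Proposition \ref{vgt} applied to $g=\phi_r$ at order $r$: since $\phi_r$ is a representative of $\ty_{r-1}$ but $\ty$ (of order $r$) does \emph{not} divide $\phi_r$ — because $N_r(\phi_r)$ is trivial, so $R_r(\phi_r)$ is a nonzero constant by Proposition \ref{onesided}(i), which $\psi_r(y)\ne y$ cannot divide — the equality case gives $v(\phi_r(\t))=v_{r+1}(\phi_r)/(e_1\cdots e_r)$. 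Substituting back produces exactly the claimed identity, and adding the two pieces gives the statement.

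The step I expect to be the main obstacle is the clean derivation of $v_{r+1}(\phi_{r+1})=\bigl(\deg(\phi_{r+1})/\deg(\phi_r)\bigr)\,v_{r+1}(\phi_r)$: one must be careful that $\phi_{r+1}$, as a representative, has $\phi_r$-adic development whose leading term is genuinely a power of $\phi_r$ (of exponent $\deg(\phi_{r+1})/\deg(\phi_r)$) and that the Newton polygon $N_r(\phi_{r+1})$ being one-sided of slope $\lambda_r$ forces $v_{r+1}$ to read off precisely the value at that leading vertex, with no contribution from lower-order terms. This is where the precise definitions of $v_{r+1}$ and of a representative from \cite[Sects.1,2]{HN} do the real work; everything else is bookkeeping with the invariants $e_i,h_i,f_i$.
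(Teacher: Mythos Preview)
Your argument is correct and follows the paper's proof almost exactly: both compute $v(\phi_{r+1}(\t))$ via the Theorem of the polygon at level $r+1$ (equation \eqref{thpolygon}), reduce to the identity $v_{r+1}(\phi_{r+1})/(e_1\cdots e_r)=e_rf_r\,v(\phi_r(\t))$, and then verify this by combining $v_{r+1}(\phi_{r+1})=e_rf_r\,v_{r+1}(\phi_r)$ with $v(\phi_r(\t))=v_{r+1}(\phi_r)/(e_1\cdots e_r)$.

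The only differences are cosmetic. The paper obtains the two auxiliary identities by direct citation: $v_{r+1}(\phi_{r+1})=e_rf_r\,v_{r+1}(\phi_r)$ is \cite[Thm.~2.11]{HN}, and $v_{r+1}(\phi_r)=e_rv_r(\phi_r)+h_r$ is \cite[Prop.~2.7]{HN}, the latter combined with \eqref{thpolygon} at level $r$ giving $v(\phi_r(\t))$. Your route via Proposition~\ref{vgt} for $\phi_r$ is equivalent, since by definition $v_{r+1}(g)=e_rH$ with $H$ as in that proposition. One small slip: the length of $N_r(\phi_{r+1})$ is $\deg(\phi_{r+1})/\deg(\phi_r)=e_rf_r$, not $\deg(\phi_{r+1})/(e_rf_r\deg(\phi_r))$; this does not affect the argument. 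Also, you need not actually build a type $\ty'$ of order $r+1$: \eqref{thpolygon} only requires that $N_{r+1}(F)$ be one-sided, which is given.
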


\begin{proof}
By \cite[Prop.2.7]{HN},  $v_{r+1}(\phi_r)=e_rv_r(\phi_r)+h_r$, and by \cite[Thm.2.11]{HN}, $v_{r+1}(\phi_{r+1})=e_rf_rv_{r+1}(\phi_r)$. These two formulas together show that
$$
\dfrac{v_{r+1}(\phi_{r+1})}{e_1\cdots e_r}=e_rf_r\,\dfrac{|\lambda_r|+v_r(\phi_r)}{e_1\cdots e_{r-1}}.
$$
Therefore, since $\deg(\phi_{r+1})/\deg(\phi_r)=e_rf_r$, the lemma is a consequence of the Theorem of the polygon, (\ref{thpolygon}):
$$
v(\phi_{r+1}(\t))=\dfrac{|\lambda_{r+1}|+v_{r+1}(\phi_{r+1})}{e_1\cdots e_r},\quad
v(\phi_r(\t))=\dfrac{|\lambda_r|+v_r(\phi_r)}{e_1\cdots e_{r-1}}.
$$
\end{proof}

\subsection{Okutsu frames and complete types}

\begin{thm}\label{niam}
Let $[F_1,\dots,F_r]$ be an Okutsu frame of $F(x)$, and let  $K_{r+1}=L$,  $F_{r+1}(x)=F(x)$. Then, there exist negative rational numbers $\lambda_1,\dots,\lambda_r$, a finite field $\ff{r}$ and a monic irreducible polynomial $\psi_r(y)\in\ff{r}[y]$, such that 
$$
(F_1(x);\lambda_1,F_2(x);\cdots;\lambda_{r-1},F_r(x);\lambda_r,\psi_r(y)),
$$ 
is an $F$-complete strongly optimal type of order $r$. More precisely, $F_1$ is irreducible modulo $\m$, the finite field $\ff{1}:=\zpx/(\m[x],F_1(x))$ is the residue field of $K_1$, and for all $1\le i\le r$:
\begin{enumerate}
\item $N_i(F)$ and $N_i(F_{i+1})$ are one-sided of slope $\lambda_i$.
\item If we write $\lambda_i=-h_i/e_i$, with $h_i,e_i$ positive coprime integers, then   $e_i=e(K_{i+1}/K)/e(K_i/K)$.
\item $R_i(F)(y)\sim \psi_i(y)^{a_i}$, for some monic irreducible polynomial $\psi_i(y)\in\ff{i}[y]$, and $R_i(F_{i+1})(y)\sim \psi_i(y)$. 
\item $f_i:=\deg \psi_i(y)=f(K_{i+1}/K)/f(K_i/K)$.
\item $\ff{i+1}:=\ff{i}[y]/(\psi_i(y))$ has degree $[\ff{i+1}\colon \ff{}]=f(K_{i+1}/K)$. 
\end{enumerate}
\end{thm}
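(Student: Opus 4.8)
The plan is to prove Theorem~\ref{niam} by induction on the depth $r$, building the type order by order and identifying at each stage the Newton polygon data with the arithmetic invariants of the tower $K_1\subseteq\cdots\subseteq K_{r+1}$. The base case $r=0$ is Corollary~\ref{phi1}(i): $F$ is irreducible modulo $\m$, the frame is empty, and there is nothing to construct. So assume $r\ge1$ and that the statement holds for all monic irreducible polynomials of depth $<r$. By Corollary~\ref{recFi}, $[F_1,\dots,F_{r-1}]$ is an Okutsu frame of $F_r$, so the inductive hypothesis produces an $F_r$-complete strongly optimal type $\ty_{r-1}=(F_1;\lambda_1,F_2;\cdots;\lambda_{r-1},\psi_{r-1}(y))$ of order $r-1$, together with the fields $\ff{1},\dots,\ff{r}$ and all the identities (1)--(5) for indices $1\le i\le r-1$, now read with $K_{r+1}$ replaced by $K_r$. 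The point is that $\ty_{r-1}$ also divides $F$: this follows because $F_r$ being a representative of $\ty_{r-1}$ (Definition~\ref{defs}(3), using $F_r$-completeness) means $R_{r-1}(F_r)\sim\psi_{r-1}$, and one checks that $\ty_{r-1}$ divides $F$ by comparing valuations via Proposition~\ref{vgt} — concretely, $v(F_r(\t))$ being large (strictly larger than what a non-dividing type would force, by Lemma~\ref{comparison} and the minimality built into the Okutsu degrees) forces $\psi_{r-1}\mid R_{r-1}(F)$.

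Next I would run the order-$r$ constructions with $\phi_r:=F_r$. Since $\ty_{r-1}$ divides both $F$ and $F_{r+1}=F$, and $F$ is irreducible, Proposition~\ref{onesided}(ii) gives that $N_r(F)=N_r^-(F)$ is one-sided of some negative slope $\lambda_r=-h_r/e_r$. Writing $F_{r+1}$ just means $F$ here, so claim (1) for $i=r$ is exactly this. Then by Proposition~\ref{onesided}(iii), $R_r(F)(y)$ is the power of an irreducible polynomial $\psi_r(y)\in\ff{r}[y]$, and $F$-completeness — i.e. $\operatorname{ord}_{\psi_r}(R_r(F))=1$ — is precisely the assertion that needs to be extracted; this is where the hypothesis "$F_r$ is a divisor polynomial / $F_{r+1}=F$ has no further primitive divisor of degree $m_r$" enters. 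I would argue $F$-completeness by a degree count: $\deg R_r(F)=n/(e_r f_{r-1}'\cdots)=n/(e_1\cdots e_r\,m_1\cdots)$ combined with the fact (Corollary~\ref{invariants}) that $m_1\mid\cdots\mid m_r\mid n$ forces the exponent to be $1$ once we know $\psi_r$ has the right degree. To pin down the right degree, and simultaneously prove (2), (4), (5), I would identify $e_r=e(L/K)/e(K_r/K)$ and $f_r=f(L/K)/f(K_r/K)$ using Lemma~\ref{aux2} together with the Theorem of the polygon: Lemma~\ref{aux2} applied with the representative $\phi_{r+1}$ of the full type gives $v(\phi_{r+1}(\t))$ in terms of $v(F_r(\t))$, $h_r$ (via $\lambda_r$, wait — with $h_{r+1}$, $e_{r+1}$ for the next order, so here the clean statement is that $v(F(\t))$ relates to $v(F_r(\t))$ through $\lambda_r$ and the $e_i$'s), and the valuation-theoretic meaning of $e_1\cdots e_r$ is the ramification index $e(L/K)$ by the Theorem of the polygon applied iteratively; comparing with $[L:K]=e(L/K)f(L/K)=n$ and $\deg\phi_{r+1}=e_rf_r\deg\phi_r=e_rf_r\,m_r$ yields $e_1\cdots e_r=e(L/K)$ and $f_0f_1\cdots f_r=f(L/K)$, hence the telescoping products give (2) and (4), and (5) is the running product of (4).

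The remaining piece is claim (3) for $i=r$: $R_r(F)\sim\psi_r^{a_r}$ with $\psi_r$ irreducible is Proposition~\ref{onesided}(iii), and $R_r(F_{r+1})\sim\psi_r$ is trivial since $F_{r+1}=F$ — but for $1\le i\le r-1$ I still need $R_i(F)\sim\psi_i^{a_i}$ as stated, which is slightly stronger than what the inductive hypothesis handed me for $F_r$ in place of $F$. Here I would again invoke Proposition~\ref{onesided}: since $\ty_{i}$ divides $F$ and $N_i(F)$ is one-sided of slope $\lambda_i$ (which follows from Proposition~\ref{onesided}(ii) applied at order $r$, giving $N_j(F)=N_j^-(F)$ one-sided of slope $\lambda_j$ for all $j<r$), part (iii) gives that $R_i(F)$ is a power of an irreducible polynomial, and that irreducible polynomial must be $\psi_i$ because $\psi_i\mid R_i(F)$ (as $\ty_i$ divides $F$). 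Strong optimality — $\deg F_1<\cdots<\deg F_r$ — is hypothesis (2) in the frame characterization, equivalently $e_if_i>1$ for $i<r$, which holds since $m_i\mid m_{i+1}$ and $m_i<m_{i+1}$ forces the quotient $e_if_i=m_{i+1}/m_i>1$; that $e_rf_r>1$ (strong optimality at the top) follows from $m_r<n$ likewise. The main obstacle I expect is the careful bookkeeping in proving $F$-completeness, $\operatorname{ord}_{\psi_r}(R_r(F))=1$: this is the one place where "Okutsu frame" (maximality of $\mu_r$, minimality of degrees) is genuinely used rather than just the divisibility of types, and it requires combining Lemma~\ref{comparison}, the degree identity $\deg R_r(F)=f_r$, and the fact that $\psi_r$ is irreducible of degree exactly $f_r=f(L/K)/f(K_r/K)$ — if the order were $>1$ the product formula $e_1\cdots e_r\cdot f_0\cdots f_r\cdot(\operatorname{ord})=n$ would be violated.
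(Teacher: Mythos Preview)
Your induction on the depth $r$ is a reasonable organizing principle, but two steps in the sketch do not go through as written.

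The serious gap is the argument for $F$-completeness. You propose to show $\operatorname{ord}_{\psi_r}(R_r(F))=1$ via the product formula $e_1\cdots e_r\cdot f_0\cdots f_r\cdot(\operatorname{ord})=n$, saying that order $>1$ would violate it. But all you can extract from the Theorem of the polygon and the cited corollaries of \cite{HN} is the \emph{divisibility} $e_1\cdots e_r\mid e(L/K)$ and $f_0\cdots f_r\mid f(L/K)$; hence $(e_1\cdots e_r)(f_0\cdots f_r)\mid n$, and the formula merely says that $a_r:=\operatorname{ord}_{\psi_r}(R_r(F))$ is the \emph{integer} $n/(e_1\cdots e_r\,f_0\cdots f_r)$, with no contradiction when $a_r>1$. (Your attempt to pin down $e_r,f_r$ via Lemma~\ref{aux2} and ``$v(F(\t))$'' cannot work, since $v(F(\t))=\infty$.) The paper's argument supplies the missing idea: if $a_r>1$, then a representative $\phi$ of the type $\ty_r$ has degree $e_rf_rm_r<n$, and Lemma~\ref{aux2} gives $v(\phi(\t))>e_rf_r\,v(F_r(\t))$; but Theorem~\ref{allm} (the divisor-polynomial property of the Okutsu frame) forces $v(\phi(\t))\le v(F_r(\t)^{e_rf_r})$, a contradiction. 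This is precisely where the maximality in the Okutsu frame is used, and it is not recoverable from degree counts alone.

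There is also a circularity in the transfer from $F_r$ to $F$. Your inductive hypothesis yields $R_i(F_r)\sim\psi_i^{a_i}$ for $i<r$, but you need $R_i(F)\sim\psi_i^{a_i}$. You write that ``$\ty_i$ divides $F$'' and then invoke Proposition~\ref{onesided}, but ``$\ty_i$ divides $F$'' is exactly $\psi_i\mid R_i(F)$, which is what you are trying to establish; and your justification that $\ty_{r-1}$ divides $F$ via Proposition~\ref{vgt} cannot be applied directly, since that proposition \emph{assumes} the type divides $F$. To make this work you would have to run a sub-induction on $i$, showing at each level that $N_i(F)$ is one-sided of slope $\lambda_i$ and $\psi_i\mid R_i(F)$, using Proposition~\ref{vgt} with the inequality $v(F_{i+1}(\t))>(m_{i+1}/m_i)v(F_i(\t))$ from Lemma~\ref{comparison}. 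That sub-induction is exactly the paper's proof: the paper inducts on $i$ directly (with a uniform hypothesis over all frames of depth $\le r$), and at each step uses (\ref{greater}) together with Proposition~\ref{vgt} and Figure~1 to force $N_i(F_{i+1})$ to have slope $\lambda_i$ and $\psi_i\mid R_i(F_{i+1})$, then the Theorem~\ref{allm}/Lemma~\ref{aux2} contradiction to force $a=1$. Once you patch both gaps, your outer induction on $r$ collapses into the paper's induction on $i$.
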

   
\begin{proof}
By (ii) of Corollary \ref{phi1}, $F(x)\equiv F_1(x)^{\deg(F)/m_1}\md{\m}$ and $F_1(x)$ is irreducible modulo $\m$; in particular, $\ff1$ is the residue field of $K_1$. Let us prove simultaneously all statements (1)-(5) by induction on $i$. Actually, we argue with an arbitrary $1\le i\le r$; if $i=1$ we do not make any assumption and if $i>1$ we assume that the conditions (1)-(5) are true for the indices $1,\dots,i-1$, for all Okutsu frames of all polynomials of depth less than or equal to $r$.

If $i=1$, the Newton polygon $N_1^-(F)$ has length $\deg(F)/m_1$  \cite[Def.1.8]{HN}. If $i>1$, the induction hypothesis shows that $F_i(x)$ is a representative of the type
$(F_1(x);\cdots,F_{i-1}(x);\lambda_{i-1},\psi_{i-1}(y))$, and $\psi_{i-1}(y)$ divides $R_{i-1}(F)(y)$; hence,  
$N_i^-(F)$ has positive length equal to $\operatorname{ord}_{\psi_{i-1}(y)}(R_{i-1}(F)(y))$, by (iv) of Proposition \ref{onesided}. Therefore, $N_i^-(F)$ is never reduced to a point. By (ii) of Proposition \ref{onesided}, $N_i^-(F)=N_i(F)$ is one-sided and it has a negative slope that we denote by $\lambda_i$, and we write as $\lambda_i=-h_i/e_i$, with $h_i,e_i$ positive coprime integers. Also, by (iii) of Proposition  \ref{onesided},  $R_i(F)(y)$ is, up to a multiplicative constant, the power of some monic irreducible polynomial $\psi_i(y)\in\ff{i}[y]$, whose degree we denote by $f_i$. 

\begin{center}
\setlength{\unitlength}{5.mm}
\begin{picture}(22,5)
\put(-.2,.85){$\bullet$}
\put(-1,0){\line(1,0){4}}\put(0,-1){\line(0,1){6}}
\put(1.5,-.4){\line(-1,1){2}}
\put(-.6,-.6){\begin{scriptsize}$0$\end{scriptsize}}
\put(.7,.5){\begin{scriptsize}$\lambda_i$\end{scriptsize}}
\put(.8,-1.6){\begin{scriptsize}$N^-_i(g)$\end{scriptsize}}
\put(-1.6,.9){\begin{scriptsize}$v_i(g)$\end{scriptsize}}

\put(6.8,3.75){$\bullet$}\put(11.85,.8){$\bullet$}
\put(6,0){\line(1,0){7}}\put(7,-1){\line(0,1){6}}
\put(7,4){\line(5,-3){5}}\put(7,3.96){\line(5,-3){5}}
\put(6.1,5.2){\line(3,-4){4.5}}
\put(6.4,-.6){\begin{scriptsize}$0$\end{scriptsize}}
\put(6.2,3.8){\begin{scriptsize}$H$\end{scriptsize}}
\put(11.8,-.65){\begin{scriptsize}$m$\end{scriptsize}}
\put(9.1,2.8){\begin{scriptsize}$\lambda$\end{scriptsize}}
\put(10.4,-.6){\begin{scriptsize}$\lambda_i$\end{scriptsize}}
\put(7.8,-1.6){\begin{scriptsize}$N^-_i(g)=N_i(g)$\end{scriptsize}}
\multiput(6.9,1)(.25,0){20}{\hbox to 2pt{\hrulefill }}
\put(5.4,.9){\begin{scriptsize}$v_i(g)$\end{scriptsize}}
\multiput(12,-.1)(0,.25){5}{\vrule height2pt}

\put(16.8,3.75){$\bullet$}\put(21.85,.8){$\bullet$}
\put(16,0){\line(1,0){7}}\put(17,-1){\line(0,1){6}}
\put(17,4){\line(5,-3){5}}\put(17,3.96){\line(5,-3){5}}
\put(23.5,.4){\line(-5,2){7.5}}
\put(16.9,3){\line(1,0){.3}}
\put(16.4,-.6){\begin{scriptsize}$0$\end{scriptsize}}
\put(16.3,2.6){\begin{scriptsize}$H$\end{scriptsize}}
\put(21.8,-.65){\begin{scriptsize}$m$\end{scriptsize}}
\put(19.1,2.8){\begin{scriptsize}$\lambda$\end{scriptsize}}
\put(22.8,.7){\begin{scriptsize}$\lambda_i$\end{scriptsize}}
\put(17.8,-1.6){\begin{scriptsize}$N^-_i(g)=N_i(g)$\end{scriptsize}}
\multiput(16.9,1)(.25,0){20}{\hbox to 2pt{\hrulefill }}
\put(15.4,.9){\begin{scriptsize}$v_i(g)$\end{scriptsize}}
\multiput(22,-.1)(0,.25){5}{\vrule height2pt}
\end{picture}
\end{center}\bigskip
\begin{center}
Figure 1 
\end{center}

We want to show that $N_i(F_{i+1})$ is also one-sided of the same slope $\lambda_i$, and $R_i(F_{i+1})\sim \psi_i(y)$. To this end we apply Proposition \ref{vgt} to the polynomial $F_{i+1}(x)$ and the type $\ty_i:=(F_1(x);\cdots,F_i(x);\lambda_i,\psi_i(y))$, in order to estimate $v(F_{i+1}(\t))$ and compare this estimation with the inequality 
\begin{equation}\label{greater}
v(F_{i+1}(\t))>\dfrac{m_{i+1}}{m_i}\,v(F_i(\t))=\dfrac{m_{i+1}}{m_i}\,\dfrac{|\lambda_i|+v_i(F_i)}{e_1\cdots e_{i-1}},
\end{equation}
given by Lemma \ref{comparison} and the Theorem of the polygon, (\ref{thpolygon}).
The possible shapes of $N_i^-(F_{i+1})$ and the first point of contact with a line of slope $\lambda_i$ shifting upwards from below are displayed in Figure 1, where we denote $g(x):=F_{i+1}(x)$ and $m:=m_{i+1}/m_i$.

Let $F_{i+1}(x)=\sum_{0\le j< m_{i+1}/m_i}a_j(x)F_i(x)^j+F_i(x)^{m_{i+1}/m_i}$ be the $F_i$-adic deve\-lopment of $F_{i+1}$. By \cite[Lem.2.17,(1)]{HN}, we have
$$v_i(F_{i+1})=\min_{0\le j\le m_{i+1}/m_i}\{v_i(a_jF_i^j)\}.$$ If $v_i(a_0)=v_i(F_{i+1})$, then $N_i^-(F_{i+1})$ is reduced to the point $(0,v_i(F_{i+1}))$ and Proposition \ref{vgt} shows that $$v(F_{i+1}(\t))=  
v_i(F_{i+1})/e_1\cdots e_{i-1}\le v_i(F_i^{m_{i+1}/m_i})/e_1\cdots e_{i-1},$$ in contradiction with (\ref{greater}). Hence, $v_i(a_0)>v_i(F_{i+1})$; this implies that $N_i^-(F_{i+1})$ is not reduced to a point, and by (ii) of Proposition \ref{onesided}, $N_i(F_{i+1})=N^-_i(F_{i+1})$ is one-sided and it has a negative slope, that we denote by $\lambda$. In particular, $v_i(F_{i+1})=(m_{i+1}/m_i)v_i(F_i)$.
     
The ordinate at the origin of the line of slope $\lambda_i$ that first touches the polygon from below is equal to $$H=v_i(F_{i+1})+\dfrac{m_{i+1}}{m_i}\min\{|\lambda_i|,|\lambda|\}=\dfrac{m_{i+1}}{m_i}\left(v_i(F_i)+\min\{|\lambda_i|,|\lambda|\}\right).$$ If $\lambda\ne\lambda_i$, item (i) of Proposition \ref{onesided} shows that $R_i(F_{i+1})$ is a constant. Hence, if either  $\lambda\ne\lambda_i$ or $
\psi_i(y)$ does not divide $R_i(F_{i+1})$, Proposition \ref{onesided} would imply $v(F_{i+1}(\t))=H/e_1\cdots e_{i-1}$, in contradiction with (\ref{greater}). Therefore, $\lambda=\lambda_i$ and $R_i(F_{i+1})(y)\sim \psi_i(y)^a$, for some positive exponent $a$.

Denote $e'_j:=e(K_{j+1}/K)/e(K_j/K)$, $f'_j:=f(K_{j+1}/K)/f(K_j/K)$, for all $1\le j\le i$. By the induction hypothesis, $e'_j=e_j$, $f'_j=f_j$, for all $1\le j<i$. Note that $e(K_1/K)=1$, $f(K_1/K)=m_1$. By (iii) of Proposition \ref{onesided} applied to $F_{i+1}(x)$, we have
$$
m_ie'_if'_i=e(K_{i+1}/K)f(K_{i+1}/K)=m_{i+1}=m_ie_if_ia.
$$ 
By \cite[Cors.1.16,1.20]{HN} we have 
$$\as{1.4}
\begin{array}{l}
e_1\cdots e_i\,|\,e(K_{i+1}/K)=e_1\cdots e_{i-1}e'_i, \\ 
 {}[ \ff{i+1}\colon \ff{} ]=m_1f_1\cdots f_i\,|\,f(K_{i+1}/K)=m_1f_1\cdots f_{i-1}f'_i.
\end{array}
$$ 
In particular, $e_i\,|\,e'_i$, $f_i\,|\,f'_i$. Thus, if we prove that $a=1$, then all statements (1)-(5) will be proven. Let  $\phi_{i+1}(x)\in\zpx$ be a representative of the above mentioned type $\ty_i$. This monic irreducible polynomial has degree $e_if_im_i$ \cite[Thm.2.11]{HN}, and by Lemma \ref{aux2}, $v(\phi_{i+1}(\t))>e_if_iv(F_i(\t))$. Now, $a>1$ implies $\deg(\phi_{i+1}(x))<m_{i+1}$, and this leads to $v(\phi_{i+1}(\t))\le v(F_i(\t)^{e_if_i})$, by Theorem \ref{allm}. Therefore, $a=1$ and the theorem is proven.
\end{proof}

By \cite[Cor.3.2,(1)]{HN},  Theorem \ref{niam} has the following immediate consequence.

\begin{cor}\label{vFi}
The invariants $v(F_1(\t)),\dots,v(F_r(\t))$ can be computed directly in terms of any Okutsu frame as:
$$v(F_i(\t))=\sum_{j=1}^i(e_jf_j\cdots e_{i-1}f_{i-1})\dfrac{h_j}{e_1\cdots e_j}.$$\qed 
\end{cor}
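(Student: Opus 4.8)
The plan is to derive Corollary \ref{vFi} as a bookkeeping consequence of Theorem \ref{niam} together with the Theorem of the polygon in the form (\ref{thpolygon}), exactly along the lines already prepared by Lemma \ref{aux2}. First I would observe that Theorem \ref{niam} tells us that $[F_1,\dots,F_r]$, completed by $F_{r+1}=F$, gives a strongly optimal $F$-complete type, and that for each $1\le i\le r$ the polynomial $F_{i+1}$ is a representative of the truncated type $\ty_i=(F_1;\lambda_1,F_2;\cdots;\lambda_{i-1},F_i;\lambda_i,\psi_i(y))$, with $N_i(F)$ (and $N_i(F_{i+1})$) one-sided of slope $\lambda_i=-h_i/e_i$. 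In particular $F_i$ is itself a representative of $\ty_{i-1}$, so by Definition \ref{defs}(3) we may apply Lemma \ref{aux2} with $\phi_r$ replaced by $F_i$ and $\phi_{r+1}$ replaced by $F_{i+1}$.

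The core of the argument is then the recursion coming from Lemma \ref{aux2}: since $\deg(F_{i+1})/\deg(F_i)=m_{i+1}/m_i=e_if_i$ (by Theorem \ref{niam}(2) and (4), or directly because $\ty_i$ is optimal), and since the slope of $N_i(F_{i+1})$ equals $\lambda_i=-h_i/e_i$, Lemma \ref{aux2} (applied one order down, i.e. to the representative $F_i$ of $\ty_{i-1}$ and its extension $F_{i+1}$) gives
$$
v(F_{i+1}(\t))=e_if_i\,v(F_i(\t))+\dfrac{h_i}{e_1\cdots e_i}.
$$
Starting from $v(F_0(\t))=0$ (taking $F_0=x$, or simply $v(F_1(\t))=h_1/e_1$ from the order-one case), one unwinds this recursion: $v(F_i(\t))=\sum_{j=1}^i (e_jf_j\cdots e_{i-1}f_{i-1})\,h_j/(e_1\cdots e_j)$, where the product $e_jf_j\cdots e_{i-1}f_{i-1}$ is empty (equal to $1$) when $j=i$. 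This is precisely the asserted formula. The citation to \cite[Cor.3.2,(1)]{HN} can be read either as providing this very recursion intrinsically (the value of $v(\phi_{r+1}(\t))$ in terms of the $e_j,f_j,h_j$) or as the statement that, for an $F$-complete type, $v(g(\t))$ is computed from the data of the type by exactly this formula.

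The only point requiring a little care is the base case and the index shifting in the application of Lemma \ref{aux2}: that lemma is stated for a type of order $r\ge 1$ and a representative $\phi_{r+1}$, with $N_{r+1}(F)$ of slope $\lambda_{r+1}$; here I am applying it with ``$r$'' equal to $i-1$, so that $\phi_r=F_i$, $\phi_{r+1}=F_{i+1}$, and $\lambda_{r+1}=\lambda_i$, which is legitimate precisely because Theorem \ref{niam}(1) guarantees $N_i(F_{i+1})$ is one-sided of slope $\lambda_i$ and Definition \ref{defs}(3) guarantees $F_i$ is a representative of $\ty_{i-1}$ and $F_{i+1}$ is a representative of $\ty_i$. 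For $i=1$ the type has order zero, $F_1$ is the representative of $\psi_0$, $\deg(F_1)=f_0=m_1$, and the Theorem of the polygon applied to $N_1(F)$ (one-sided of slope $\lambda_1$) gives $v(F_1(\t))=h_1/e_1$ directly, which matches the formula. So I expect no genuine obstacle here — the statement is essentially a repackaging of Lemma \ref{aux2} and Theorem \ref{niam}, and the ``proof'' is the one-line deduction indicated in the text via \cite[Cor.3.2,(1)]{HN}; the main thing to get right is the empty-product convention and the shifted indexing in Lemma \ref{aux2}.
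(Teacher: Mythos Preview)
Your strategy is the right one, and it is exactly what the paper intends: Theorem \ref{niam} turns the Okutsu frame into a complete type, and then Lemma \ref{aux2} (which is the content of \cite[Cor.3.2,(1)]{HN} in this situation) yields the recursion that unwinds to the closed formula. However, the recursion you wrote down is off by one index, and as written it does not reproduce the corollary.

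In Lemma \ref{aux2} the additive term comes from the slope of $N_{r+1}(F)$, the Newton polygon of $F$ taken with respect to the \emph{representative} $\phi_{r+1}$, not from the slope of $N_r(\phi_{r+1})$. If you set $\phi_r=F_i$ and $\phi_{r+1}=F_{i+1}$ (so the type has order $i$), the relevant slope is $\lambda_{i+1}$, and the lemma gives
$$
v(F_{i+1}(\t))=e_if_i\,v(F_i(\t))+\dfrac{h_{i+1}}{e_1\cdots e_{i+1}},
$$
not $h_i/(e_1\cdots e_i)$. Equivalently, applying the lemma with $r=i-1$, $\phi_{r}=F_{i-1}$, $\phi_{r+1}=F_i$ (valid for $i\ge2$) gives $v(F_i(\t))=e_{i-1}f_{i-1}\,v(F_{i-1}(\t))+h_i/(e_1\cdots e_i)$. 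Your displayed recursion mixed the multiplicative factor from one indexing with the additive term from the other; with your version one gets $v(F_2(\t))=f_1h_1+h_1/e_1$ instead of the correct $f_1h_1+h_2/(e_1e_2)$, so it does not unwind to the stated formula.

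With the corrected recursion and your base case $v(F_1(\t))=h_1/e_1$, the induction goes through verbatim and yields the corollary. So the only thing to fix is this index shift; the approach itself is the paper's.
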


\begin{cor}\label{newinvariants}
The following invariants depend only on $F(x)$:
\begin{enumerate}
\item[(i)] The slopes $\lambda_1,\dots,\lambda_r$, respectively of $N_1(F),\dots,N_r(F)$.
\item[(ii)] The discrete valuations $v_1,\dots,v_{r+1}$ on $K(x)$.
\end{enumerate}
The slopes $\lambda_1,\dots,\lambda_{r-1}$, and the discrete valuations $v_1,\dots,v_r$ on $K(x)$, can be computed from any Okutsu frame of $F(x)$.
\end{cor}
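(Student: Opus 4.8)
\emph{Strategy.} The plan is to show that each object listed in the statement is unchanged when the Okutsu frame $[F_1,\dots,F_r]$ is replaced by another one. The engine is Theorem~\ref{niam}: every Okutsu frame is the tuple of representatives of an $F$-complete strongly optimal type, so the slopes $\lambda_i$ and the valuations $v_i$ are precisely the data carried by such a type. The idea is to pin these down in terms of quantities already known to be Okutsu invariants, namely the Okutsu degrees $m_i$, the numbers $e(K_i/K)$ and $f(K_i/K)$ (Corollary~\ref{invariants}), and the valuations $v(F_i(\t))$ (Corollary~\ref{more}). I will use repeatedly that $F_1$ is irreducible modulo $\m$, so $e(K_1/K)=1$ and hence, by Theorem~\ref{niam}(2), $e_1\cdots e_i=e(K_{i+1}/K)$ for every $i$.

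\emph{Part (i).} The slope $\lambda_i=-h_i/e_i$ is recovered recursively. For $i=1$, Corollary~\ref{vFi} gives $v(F_1(\t))=h_1/e_1$, so $\lambda_1=-v(F_1(\t))$ depends only on $F$. For $i\ge 2$, $F_i$ is a representative of the truncated type of order $i-1$ and $\lambda_i$ is the slope of $N_i(F)$, so Lemma~\ref{aux2} applied to $F$ with that type yields
$$
\frac{h_i}{e_1\cdots e_i}=v(F_i(\t))-\frac{m_i}{m_{i-1}}\,v(F_{i-1}(\t)),
$$
whose right-hand side is an Okutsu invariant. Multiplying by $e_1\cdots e_i=e(K_{i+1}/K)$ shows $h_i$ is an Okutsu invariant, $e_i=e(K_{i+1}/K)/e(K_i/K)$ is one, and hence so is $\lambda_i=-h_i/e_i$. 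For $i<r$ the slope $\lambda_i$ is also the slope of the one-sided polygon $N_i(F_{i+1})$ by Theorem~\ref{niam}(1), so it is already visible from $[F_1,\dots,F_r]$ alone; this settles the last sentence for the slopes.

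\emph{Part (ii).} The valuation $v_1$ is the Gauss valuation of (\ref{v1}), which mentions no frame. Fix $1\le i\le r$; then $v_{i+1}$ is the valuation attached to $(F_i,v_i,\lambda_i)$, and by Theorem~\ref{niam}(3) the truncated type $\ty_i$ of order $i$ divides $F$. I will compute $v_{i+1}$ on the primes of the unique factorization domain $\zpx$. First, taking for $g$ a nonzero constant in Proposition~\ref{vgt} gives $v_{i+1}|_K=e(K_{i+1}/K)\,v$, in particular $v_{i+1}(\pi)=e(K_{i+1}/K)$. Next, for a primitive irreducible $g\in\zpx$ of positive degree that $\ty_i$ does not divide, Proposition~\ref{vgt} gives $v_{i+1}(g)=(e_1\cdots e_i)\,v(g(\t))=e(K_{i+1}/K)\,v(g(\t))$, which depends only on $F$ and $g$. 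Finally, if $\ty_i$ divides such a $g$, then $\psi_i\mid R_i(g)$, so $R_i(g)$ is non-constant and, $g$ being irreducible, Proposition~\ref{onesided} forces $N_i(g)=N_i^-(g)$ to be one-sided of slope $\lambda_i$ and of length $\deg(g)/m_i$; reading off its right endpoint, together with the identity $v_{i+1}(F_i)=e_iv_i(F_i)+h_i$ used in the proof of Lemma~\ref{aux2}, expresses $v_{i+1}(g)$ through $\tfrac{\deg(g)}{m_i}\,v_{i+1}(F_i)$ and, when the leading coefficient $c$ of $g$ is a non-unit, an extra term $v_{i+1}(c)$. Since $\deg F_i=m_i<m_{i+1}$ the polynomial $F_i$ is not divisible by $\ty_i$, so both of these are covered by the earlier cases and are Okutsu invariants. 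As a valuation on the fraction field $K(x)$ of $\zpx$ is determined by its values on the primes of $\zpx$, this shows $v_{i+1}$ is an Okutsu invariant. For $j\le r$ the construction of $v_j$ uses only $F_1,\dots,F_{j-1}$ and $\lambda_1,\dots,\lambda_{j-1}$ — all read off any frame by part (i) — whereas $v_{r+1}$ additionally requires $\lambda_r$, i.e.\ the polygon $N_r(F)$; this is the content of the last sentence.

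\emph{Main obstacle.} The delicate step is the last case of part~(ii): when $\ty_i$ divides $g$ the clean identity of Proposition~\ref{vgt} degenerates into a strict inequality and no longer determines $v_{i+1}(g)$, so one must exploit the rigidity of the one-sided polygon $N_i(g)$ to reduce back to the non-divisible case. Carrying this out for \emph{all} irreducible elements of $\zpx$, including the non-monic primitive ones, and checking compatibility with unique factorization so that multiplicativity of $v_{i+1}$ may be invoked, is the real technical work; everything else is a direct translation of Theorem~\ref{niam}, Lemma~\ref{aux2}, and Proposition~\ref{vgt}.
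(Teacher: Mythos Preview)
Your proof is correct, and for part (i) it is close in spirit to the paper's: both pin down $\lambda_i$ via the Theorem of the polygon, though you streamline the recursion by invoking Lemma~\ref{aux2} directly to write $h_i/(e_1\cdots e_i)=v(F_i(\t))-(m_i/m_{i-1})\,v(F_{i-1}(\t))$, whereas the paper argues inductively that $v_i(F_i)=v'_i(F'_i)$ via the closed formula of \cite[Prop.~2.15]{HN} and then reads off $\lambda_i=\lambda'_i$ from~(\ref{thpolygon}).

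For part (ii) the two arguments genuinely diverge. You give an \emph{intrinsic} description of $v_{i+1}$: on primes $g$ of $\zpx$ not divisible by $\ty_i$ you combine Proposition~\ref{vgt} with the very definition $v_{i+1}=e_iH$ to obtain $v_{i+1}(g)=e(K_{i+1}/K)\,v(g(\t))$, a quantity manifestly independent of the frame; when $\ty_i\mid g$ you exploit the one-sidedness of $N_i(g)$ (Proposition~\ref{onesided}) to reduce to $v_{i+1}(F_i)$ and $v_{i+1}(c)$, already covered. The paper instead takes two frames $[F_j],[F'_j]$ and compares $v_{i+1}$ with $v'_{i+1}$ \emph{directly}: writing $F'_i=F_i-A$ and noting that $N_i(F'_i)$ has slope $-h$ with $h\ge|\lambda_i|$, it tracks how the $F_i$-adic and $F'_i$-adic constant terms of an irreducible $g$ differ, and shows in each of the three configurations of Figure~1 that the ordinate $H$ is the same. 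Your route is more conceptual and reusable (it actually produces a closed formula for $v_{i+1}$), while the paper's is more elementary in that it avoids the case ``$\ty_i\mid g$'' bookkeeping at the cost of a hands-on comparison of two developments. One small remark: your appeal to Proposition~\ref{vgt} for $v_{i+1}|_K$ is really Proposition~\ref{vgt} \emph{plus} the definition $v_{i+1}=e_iH$; the paper cites \cite[Prop.~2.6]{HN} for this fact.
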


\begin{proof}
Suppose that $[F_1',\dots,F'_r]$ is another Okutsu frame of $F(x)$, leading to the family of slopes $\lambda'_1,\dots,\lambda'_r$, and valuations $v_1,v'_2,\dots,v'_{r+1}$ on $K(x)$. By Theorem \ref{niam}, both Okutsu frames determine $F$-complete types of order $r$; let $h_i,e_i,f_i$, and  $h'_i,e'_i,f'_i$ be the respective invariants of these types, for all $1\le i\le r$. By Theorem \ref{niam} and Corollary \ref{invariants}, $$e_i=e(K_{i+1}/K)/e(K_i/K)=e'_i, \quad f_i=f(K_{i+1}/K)/f(K_i/K)=f'_i,$$for all $1\le i\le r$.
Let us prove that $\lambda_i=\lambda'_i$ for all $1\le i\le r$, by induction on $i$. Since $F_1(x)$ and $F'_1(x)$ are both divisor polynomials of degree $m_1$ of $F(x)$ (Theorem \ref{PDP}), we have
$\lambda_1=-v(F_1(\t))=-v(F'_1(\t))=\lambda'_1$. Suppose $i>1$ and 
$\lambda_j=\lambda'_j$ (hence $h_j=h'_j$), for all $1\le j<i$. Since $F_i(x)$ and $F'_i(x)$ are both divisor polynomials of degree $m_i$ of $F(x)$, we have $v(F_i(\t))=v(F'_i(\t))$, and by the Theorem of the polygon (cf. (\ref{thpolygon})):
 \begin{equation}\label{compare}
\dfrac{|\lambda_i|+v_i(F_i)}{e_1\cdots e_{i-1}}=v(F_i(\t))=v(F'_i(\t))=\dfrac{|\lambda'_i|+v'_i(F'_i)}{e_1\cdots e_{i-1}}.
\end{equation}
In \cite[Prop.2.15]{HN}, we found a closed formula for $v_i(F_i)$ that depends only on 
 $h_1,\dots,h_{i-1}$, $e_1,\dots,e_{i-1}$ and $f_1,\dots,f_{i-1}$; hence, $v_i(F_i)=v'_i(F'_i)$, and (\ref{compare}) shows that $\lambda_i=\lambda'_i$.

Let us prove now that $v_i=v'_i$ for all $1\le i\le r+1$, by induction on $i$. The valuation $v_1$ being canonical, we need only to prove that $v_{i+1}=v'_{i+1}$, under the assumption $v_i=v'_i$, for some $1\le i\le r$.
We claim that $N_i(F'_i)$ is one-sided with negative slope $-h$, for some positive integer $h\ge |\lambda_i|$, which may be taken to be $h=\infty$ if $F_i=F'_i$ \cite[Lem.2.17,(3)]{HN}. In fact, let $A(x)=F_i(x)-F_i'(x)$, with $v_i(A)=v_i(F_i)+h$; the shape of $N_i(F'_i)$ is displayed in the following picture:
\begin{center}
\setlength{\unitlength}{5.mm}
\begin{picture}(5,6)
\put(-.2,4.75){$\bullet$}\put(1.8,1.8){$\bullet$}
\put(-1,0){\line(1,0){5}}\put(0,-1){\line(0,1){7}}
\put(0,5){\line(2,-3){2}}\put(0,4.96){\line(2,-3){2}}
\put(3.5,.5){\line(-1,1){4}}\put(-.1,3.95){\line(1,0){.2}}
\put(-1.7,4.9){\begin{scriptsize}$v_i(A)$\end{scriptsize}}
\put(2.8,1.2){\begin{scriptsize}$\lambda_i$\end{scriptsize}}
\put(-.8,3.7){\begin{scriptsize}$H$\end{scriptsize}}
\put(-.6,-.6){\begin{scriptsize}$0$\end{scriptsize}}
\put(1.9,-.65){\begin{scriptsize}$1$\end{scriptsize}}
\put(.9,3.8){\begin{scriptsize}$-h$\end{scriptsize}}
\put(.4,-1.6){\begin{scriptsize}$N_i(F'_i)$\end{scriptsize}}
\multiput(-.1,2.05)(.25,0){9}{\hbox to 2pt{\hrulefill }}
\put(-1.9,1.9){\begin{scriptsize}$v_i(F_i)$\end{scriptsize}}
\multiput(2,-.1)(0,.25){9}{\vrule height2pt}
\end{picture}
\end{center}\bigskip\bigskip
and $v(F'_i(\t))\ge H/e_1\cdots e_{i-1}$, by Proposition \ref{vgt}. This implies  $h\ge|\lambda_i|$, because $v(F'_i(\t))=(|\lambda_i|+v_i(F_i))/e_1\cdots e_{i-1}$, by (\ref{compare}).

Both discrete valuations $v_{i+1},\,v'_{i+1}$ coincide with $e_1\cdots e_i\, v_1$, when res\-tricted to $K$ \cite[Prop.2.6]{HN}. Hence, in order to show that they are equal it is suficient to show that they coincide on all  irreducible polynomials with coefficients in $\oo$. Let $g(x)\in\zpx$ be an irreducible polynomial. Let us recall the definition of $v_{i+1}$ \cite[Def.2.5]{HN}. If we consider a line of slope $\lambda_i$ far below $N_i(g)$ and we let it shift upwards till it touches the polygon for the first time, then $v_{i+1}(g(x))=e_i H$, where $H$ is the ordinate at the origin of this line. Let
$g(x)=\sum_{0\le j\le m}a_j(x)F_i(x)^j$ be the $F_i$-adic development of $g(x)$; we recall that $v_i(g)=\min_{0\le j\le m}\{v_i(a_j(x)F_i(x)^j)\}$ \cite[Lem.2.17,(1)]{HN}. Note that the $0$-th coefficient of the $F'_i$-adic development, $g(x)=\sum_{0\le j\le m}a'_j(x)F'_i(x)^j$, is $a'_0=a_0+a_1A+\cdots a_m A^m$. We distinguish three cases according to the shape of $N_i^-(g)$, and its first point of contact with the line of slope $\lambda_i$ under it. These possibilities are reflected in Figure 1, that was displayed along the proof of Theorem \ref{niam}.

If $v_i(a_0)=v_i(g)$, then $N_i^-(g)$ is reduced to the point $(0,v_i(g))$, and $v_{i+1}(g)=e_iv_i(g)$. Since $v_i(A)>v_i(F_i)$, 
we have $v_i(a_0)\le v_i(a_kF_i^k)<v_i(a_kA^k)$, for all $k>0$. Thus, $v_i(a'_0)=v_i(a_0)=v_i(g)$, so that $(N'_i)^-(g)$ is also reduced to the point $(0,v_i(g))$, and $v'_{i+1}(g)=e_iv_i(g)$ too.
  
If $v_i(a_0)>v_i(g)$, then $N_i^-(g)$ is not reduced to a point, and $N^-_i(g)=N_i(g)$ is one-sided of negative slope $\lambda$, by (ii) of Proposition \ref{onesided}. In the special case $g(x)=F_i(x)$ we take $\lambda=\infty$ \cite[Lem.2.17,(3)]{HN}. As Figure 1 shows,  $v_{i+1}(g)=e_i H$, where  
$H=v_i(g)+m\min\{|\lambda_i|,|\lambda|\}$. Now, 
\begin{equation}\label{a0}
 v_i(a_kA^k)=v_i(a_kF_i^k)+kh\\\ge v_i(g)+(m-k)|\lambda|+kh,
\end{equation}
for all $k\ge0$.
Suppose $|\lambda|<|\lambda_i|$. Then, $h>|\lambda|$, and $v_i(a_kA^k)>v_i(g)+m|\lambda|=v_i(a_0)$, for all $k>0$.
Hence, $v_i(a'_0)=v_i(a_0)>v_i(g)$. This implies $N_i^-(g)=(N'_i)^-(g)$, because both polygons are one-sided with the same end points: $(0,v_i(a_0))$ and $(m,v_i(g))$. In particular, $v_{i+1}(g)=v'_{i+1}(g)=e_iv_i(a_0)$.
    
Suppose now $|\lambda|\ge|\lambda_i|$. By (\ref{a0}), we have now $v_i(a_kA^k)\ge v_i(g)+m|\lambda_i|$, for all $k\ge0$. Hence, $v_i(a'_0)\ge v_i(g)+m|\lambda_i|>v_i(g)$, and $(N'_i)^-(g)$ is one-sided of negative slope $\lambda'$, which is not necessarily equal to $\lambda$. Clearly,  $|\lambda'|=(v_i(a'_0)-v_i(g))/m\ge|\lambda_i|$, and  $v_{i+1}(g)=v'_{i+1}(g)=e_i(v_i(g)+m|\lambda_i|)$.

The last statement of the corollary is a consequence of Theorem \ref{niam}: $\lambda_i$ is the slope of $N_i(F_{i+1})$, and $v_{i+1}$ is determined by $F_i(x)$, $v_i$ and $\lambda_i$.
\end{proof}

Our next aim is to prove the converse of Theorem \ref{niam}: an $F$-complete strongly optimal type is an Okutsu frame, plus the data $\lambda_r$, $\psi_r(y)$. To this end we recall what kind of optimal behaviour do have the optimal types. The following result is extracted from \cite[Thm.3.1]{GMN}, where this optimality was analyzed in a more general situation.

\begin{thm}{\cite[Thm.3.1]{GMN}}\label{optimal}
Let $F(x)\in\zpx$ be a monic irreducible polynomial, and $\ty=(\phi_1(x);\lambda_1,\phi_2(x);\cdots;\lambda_{i-1},\phi_i(x);\lambda_i,\psi_i(y))
$ a type of order $i\ge 1$ dividing $F(x)$. Let $\phi'_i(x)\in\zpx$ be another representative of the truncated type 
$$\ty_{i-1}=(\phi_1(x);\lambda_1,\phi_2(x);\cdots;\lambda_{i-2},\phi_{i-1}(x);\lambda_{i-1},\psi_{i-1}(y)).$$
Let $\lambda'_i$ be the slope of the one-sided Newton polygon of $i$-th order of $F(x)$, $N_i'(F)$, taken with respect to the pair $(\phi'_i(x),v_i)$. If $e_if_i>1$, then $|\lambda'_i|
\le |\lambda_i|$.
\end{thm}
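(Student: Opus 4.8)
The plan is to prove Theorem \ref{optimal} by comparing the value $v(\phi_i(\t))$, computed in two ways, using the two representatives $\phi_i$ and $\phi'_i$ of the truncated type $\ty_{i-1}$. The heart of the matter is that both representatives are monic irreducible polynomials of the \emph{same} degree $d:=e_1f_1\cdots e_{i-1}f_{i-1}\cdot m_1 = \deg(\phi_{i-1})\cdot e_{i-1}f_{i-1}$ (using \cite[Thm.2.11]{HN}), and that this common degree is exactly the Okutsu degree $m_i$ of $F(x)$ precisely when the hypothesis $e_if_i>1$ (equivalently, strong optimality up to order $i$) holds together with $\ty_{i-1}$ being optimal. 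First I would invoke Lemma \ref{aux2} applied to the type $\ty_{i-1}$ and the representative $\phi_i$ (respectively $\phi'_i$): since $N_i(F)$ is one-sided of slope $\lambda_i$ with respect to $(\phi_i,v_i)$, and of slope $\lambda'_i$ with respect to $(\phi'_i,v_i)$, the lemma yields
\[
v(\phi_i(\t))=\frac{\deg\phi_i}{\deg\phi_{i-1}}\,v(\phi_{i-1}(\t))+\frac{|\lambda_i|}{e_1\cdots e_i},\qquad
v(\phi'_i(\t))=\frac{\deg\phi'_i}{\deg\phi_{i-1}}\,v(\phi_{i-1}(\t))+\frac{|\lambda'_i|}{e_1\cdots e_{i-1}e'_i},
\]
where $e'_i$ is the ramification index coming from $\lambda'_i$.

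Next I would argue that $d=\deg\phi_i=\deg\phi'_i$ is $<m_{i+1}$ but $\ge m_i$, and in fact equals $m_i$. The inequality $d<m_{i+1}$ follows because $F(x)$ itself, being divisible by $\ty_{i-1}$ and of degree $n>d$ when the type is not yet $F$-complete, forces $m_{i+1}=e_if_i\cdot d$ with $e_if_i>1$ by the hypothesis. The lower bound $d\ge m_i$ and the reverse $d\le m_i$ come from Theorem \ref{allm} / Theorem \ref{PDP}: a divisor polynomial of degree $d$ is built from the primitive divisor polynomials, and the irreducibility of $\phi_i$ of degree $d$ forces $d$ to be one of the Okutsu degrees. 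Then both $\phi_i$ and $\phi'_i$ are monic irreducible of the Okutsu degree $m_i$, hence by Theorem \ref{PDP} a divisor polynomial of that degree has the maximal value $v(F_i(\t))$, so $v(\phi_i(\t))\le v(F_i(\t))$ and likewise $v(\phi'_i(\t))\le v(F_i(\t))$; moreover one of them, being the actual $\phi_i$ arising from the type dividing $F$, attains the extremal slope and hence $v(\phi_i(\t))$ is the largest value among degree-$m_i$ monic polynomials — so $v(\phi'_i(\t))\le v(\phi_i(\t))$.

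Finally I would feed the inequality $v(\phi'_i(\t))\le v(\phi_i(\t))$ into the two displayed formulas. Since $\deg\phi_i=\deg\phi'_i$ and $v(\phi_{i-1}(\t))$ is the same in both, the leading terms cancel, and we are left with
\[
\frac{|\lambda'_i|}{e_1\cdots e_{i-1}e'_i}\le \frac{|\lambda_i|}{e_1\cdots e_{i-1}e_i}.
\]
To conclude $|\lambda'_i|\le|\lambda_i|$ I would note that $e'_i$ and $e_i$ are both determined as denominators of these slopes, and that the value-theoretic identity $e(K_i/K)=e_1\cdots e_{i-1}e_i$ (Theorem \ref{niam}(2)) pins down $e_i$ independently of the representative; a short divisibility argument (the ramification index of $K(\t)$ over the subfield cut out by $\ty_{i-1}$ is a fixed integer divisible by both $e_i$ and $e'_i$) gives $e'_i\mid e_i$, whence $e'_i\le e_i$, and combined with the displayed inequality this forces $|\lambda'_i|\le|\lambda_i|$. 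The main obstacle I anticipate is the bookkeeping needed to show that the common degree $d$ really is the Okutsu degree $m_i$ and not some smaller value — this is where the optimality hypothesis $e_if_i>1$ is genuinely used, and it requires carefully invoking Theorem \ref{allm} to rule out that a shorter product of primitive divisor polynomials could already realize degree $d$ with a strictly larger value.
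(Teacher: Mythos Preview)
Your argument is circular. The central step is the claim that ``$\phi_i$, being the actual representative arising from the type dividing $F$, attains the extremal slope and hence $v(\phi_i(\t))$ is the largest value among degree-$m_i$ monic polynomials.'' But this is precisely what Theorem~\ref{optimal} asserts. The formulas you correctly derive from Lemma~\ref{aux2} read (after writing $h_i/e_i=|\lambda_i|$, $h'_i/e'_i=|\lambda'_i|$)
\[
v(\phi_i(\t))=\tfrac{\deg\phi_i}{\deg\phi_{i-1}}\,v(\phi_{i-1}(\t))+\tfrac{|\lambda_i|}{e_1\cdots e_{i-1}},\qquad
v(\phi'_i(\t))=\tfrac{\deg\phi'_i}{\deg\phi_{i-1}}\,v(\phi_{i-1}(\t))+\tfrac{|\lambda'_i|}{e_1\cdots e_{i-1}},
\]
so $|\lambda'_i|\le|\lambda_i|$ is \emph{equivalent} to $v(\phi'_i(\t))\le v(\phi_i(\t))$; you cannot deduce the latter from the hypotheses without already knowing the former. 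In particular, nothing in the hypotheses says that $\phi_i$ is a divisor polynomial of its degree: the statement that representatives of $F$-complete strongly optimal types form Okutsu frames is Theorem~\ref{main}, whose proof \emph{uses} Theorem~\ref{optimal}. You also invoke Theorem~\ref{niam}(2) to pin down $e_i$, but that theorem applies to the type built from an Okutsu frame, and you have not established that $\ty$ is such a type.

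There are two secondary issues. First, the identification $\deg\phi_i=m_i$ is not available: the type $\ty$ is not assumed optimal, so $\deg\phi_i$ need not be any Okutsu degree of $F$. Second, your final worry about $e_i$ versus $e'_i$ is a non-issue: as the displayed formulas show, the denominators agree ($e_1\cdots e_{i-1}$ in both), so the comparison of slopes is immediate once the comparison of values is in hand---but that is exactly the missing step.

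Note that the paper does not prove this theorem; it quotes it from \cite[Thm.~3.1]{GMN}. The argument there proceeds by a direct Newton-polygon computation: one writes $\phi'_i=\phi_i+A$ with $\deg A<\deg\phi_i$, controls $v_i(A)$, and compares the $\phi_i$-adic and $\phi'_i$-adic developments of $F$ term by term (in the spirit of the proof of Corollary~\ref{newinvariants} in this paper). The hypothesis $e_if_i>1$ enters because it forces the length of $N_i(F)$ to be at least $2$, which is what makes the polygon comparison go through. An approach via Okutsu frames cannot work here without circularity, since Theorem~\ref{optimal} is logically prior to the Okutsu-frame characterization of types in this paper.
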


\begin{thm}\label{main}
Let $F(x)\in\zpx$ be a monic irreducible polynomial, and let $\ty=(\phi_1(x);\lambda_1,\phi_2(x);\cdots;\lambda_{r-1},\phi_r(x);\lambda_r,\psi_r(y))$ be an $F$-complete strongly optimal type of order $r\ge 1$.
Then, $[\phi_1,\dots,\phi_r]$ is an Okutsu frame of $F(x)$.
\end{thm}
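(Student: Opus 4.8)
The plan is to argue by induction on the order $r$ and to conclude via Theorem~\ref{PDP}: writing $m_i:=\deg\phi_i$ (so that $m_{i+1}=e_if_im_i$), I will show that $m_1<\cdots<m_r$ are exactly the Okutsu degrees of $F(x)$ and that each $\phi_i(x)$ is a divisor polynomial of $F(x)$ of degree $m_i$. Two facts hold throughout. Since $\ty$ divides $F(x)$, so does every truncation $\ty_i$, $1\le i\le r$; hence $N_i^-(F)$ has positive length for each such $i$ by Proposition~\ref{onesided}(iv), and since $F(x)$ is irreducible, Proposition~\ref{onesided}(i)--(iii) forces $N_i(F)=N_i^-(F)$ to be one-sided of slope $\lambda_i$ with $R_i(F)\sim\psi_i^{a_i}$, for all $1\le i\le r$; in particular $v(\phi_i(\t))=\bigl(|\lambda_i|+v_i(\phi_i)\bigr)/(e_1\cdots e_{i-1})$ by the Theorem of the polygon~(\ref{thpolygon}). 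Moreover $F$-completeness together with strong optimality gives $\deg F=e_rf_rm_r>m_r$.

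The base case $r=0$ is Corollary~\ref{phi1}(i). Assume $r\ge1$. The polynomial $\phi_r(x)$ is a representative of $\ty_{r-1}$, which is $\phi_r$-complete and which is strongly optimal because optimality of $\ty$ forces $e_if_i>1$ for $i<r$; so the induction hypothesis (or Corollary~\ref{phi1}(i) when $r=1$) gives that $[\phi_1,\dots,\phi_{r-1}]$ is an Okutsu frame of $\phi_r(x)$, with Okutsu degrees $m_1<\cdots<m_{r-1}$ and each $\phi_i$ ($i<r$) a divisor polynomial of $\phi_r$ of degree $m_i$.

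The core of the argument is to \emph{transfer this data from $\phi_r$ to $F$}. Fix a root $\beta$ of $\phi_r(x)$ and a root $\t$ of $F(x)$. When $r\ge2$, Lemma~\ref{aux2} gives $v(\phi_r(\t))=(m_r/m_{r-1})\,v(\phi_{r-1}(\t))+h_r/(e_1\cdots e_r)>(m_r/m_{r-1})\,v(\phi_{r-1}(\t))$ (and $v(\phi_1(\t))=|\lambda_1|>0$ when $r=1$), so Lemma~\ref{comparison} forces $\delta_F(\phi_r)>\mu_{r-1}$ once the $(r-1)$-th Okutsu degree and value of $F$ have been identified as $m_{r-1}$ and the common value $v(\phi_{r-1}(\t))$; choosing $\beta$ so as to realize $\delta_F(\phi_r)$, one has $v(\t-\beta)>\mu_{r-1}$. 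As $v(\t-\beta)$ then dominates all lower Okutsu values, the valuations $v(\t-\eta)$ and $v(\beta-\eta)$ agree in the relevant range for every $\eta\in\qpb$, so --- exactly in the spirit of Lemmas~\ref{recurrence} and~\ref{taylor} --- the first $r-1$ Okutsu degrees and values of $F$ coincide with those of $\phi_r$ and $v(\phi_i(\t))=v(\phi_i(\beta))$ for $i<r$; combining this with Lemma~\ref{comparison}, the divisor-polynomial property of each $\phi_i$ ($i<r$) passes from $\phi_r$ to $F$. I expect this interlacing --- the outer induction on $r$ together with an inner induction on the lower Okutsu degrees --- to be the main obstacle.

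It remains to treat the top stage. Since $e_rf_r>1$, Theorem~\ref{optimal} shows that any representative $\phi_r'(x)$ of $\ty_{r-1}$ gives a one-sided polygon $N_r'(F)$ of slope $\lambda_r'$ with $|\lambda_r'|\le|\lambda_r|$, hence $v(\phi_r'(\t))\le v(\phi_r(\t))$ by~(\ref{thpolygon}); and by Theorem~\ref{allm} any monic polynomial of degree $m_r$ that is not a representative of $\ty_{r-1}$ has strictly smaller value at $\t$. Thus $v(\phi_r(\t))$ is maximal among the values $v(g(\t))$ for $g(x)$ monic of degree $m_r$, so by the argument in the proof of Proposition~\ref{DP} the polynomial $\phi_r$ is a divisor polynomial of $F$ of degree $m_r$; together with $v(\phi_r(\t))>\mu_{r-1}$ this makes $m_r$ the $r$-th Okutsu degree of $F$. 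Finally, suppose $g(x)$ is monic irreducible with $m_r<\deg g<\deg F$ and $v(g(\t))>v(\phi_r(\t))$: by Proposition~\ref{onesided}(iii)--(iv) the divisibility $\ty\mid g$ would force $\deg g\ge e_rf_rm_r=\deg F$, which is impossible, so $\ty\nmid g$ and Proposition~\ref{vgt} gives an exact value for $v(g(\t))$, which a direct combinatorial estimate (again essentially Theorem~\ref{allm}) bounds by $v(\phi_r(\t))$ when $\deg g<\deg F$ --- a contradiction. Hence the Okutsu degrees of $F$ are precisely $m_1<\cdots<m_r<\deg F$ and each $\phi_i$ is a divisor polynomial of $F$ of degree $m_i$, so Theorem~\ref{PDP} yields that $[\phi_1,\dots,\phi_r]$ is an Okutsu frame of $F(x)$.
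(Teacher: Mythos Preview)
Your outer induction on $r$, with hypothesis ``$[\phi_1,\dots,\phi_{r-1}]$ is an Okutsu frame of $\phi_r$'', runs into a genuine circularity at the transfer step. To pass from $\phi_r$ to $F$ you invoke Lemma~\ref{comparison} (to get $\delta_F(\phi_r)>\mu_{r-1}$) and later Theorem~\ref{allm} (to bound values of non-representatives and of intermediate-degree polynomials), but both results are stated relative to an Okutsu frame \emph{of $F$}, which is exactly what you are constructing. You acknowledge this yourself (``once the $(r-1)$-th Okutsu degree and value of $F$ have been identified'') and propose an interlaced double induction, but that inner induction is never set up. Making it work would amount to proving a converse of Lemma~\ref{recurrence}: from an Okutsu frame of $\phi_r$ together with $v(\t-\beta)>\mu_{r-1}^{\phi_r}$, deduce that the first $r-1$ Okutsu data of $F$ coincide with those of $\phi_r$. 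That statement is plausible but is not in the paper and is not automatic; and establishing the closeness $v(\t-\beta)>\mu_{r-1}^{\phi_r}$ itself, without already knowing the Okutsu structure of $F$, needs its own argument (note that Lemma~\ref{taylor} only controls polynomials of degree \emph{strictly less} than $m_i$, so even the claimed equality $v(\phi_i(\t))=v(\phi_i(\beta))$ for $i<r$ is not immediate).

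The paper sidesteps all of this by never importing Okutsu data from $\phi_r$. It fixes from the outset an Okutsu frame $[F_1,\dots,F_s]$ of $F$ (which exists by definition) and shows by induction on $i$ that $\phi_1,\dots,\phi_i$ can replace $F_1,\dots,F_i$ as the first $i$ members of such a frame. The inductive step uses Theorem~\ref{niam}, already proved, applied to the hybrid frame $[\phi_1,\dots,\phi_i,F_{i+1},\dots]$: this forces $F_{i+1}$ to be a representative of the truncation $\ty_i$, whence Theorem~\ref{optimal} gives $v(F_{i+1}(\t))\le v(\phi_{i+1}(\t))$, while the divisor-polynomial property (Theorem~\ref{PDP}) gives the reverse inequality; so $\phi_{i+1}$ is itself a divisor polynomial and joins the frame. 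The equality $r=s$ is handled in the same pass. Your use of Theorem~\ref{optimal} at the top stage is in exactly this spirit; the point is that the paper applies it at \emph{every} stage, against an Okutsu frame of $F$ that is already available, rather than trying to manufacture one from $\phi_r$.
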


\begin{proof}
Let $m'_i=\deg(\phi_i)$, for all $1\le i\le r$, and $m'_{r+1}:=\deg(F)$. We know that $m'_{i+1}=m'_ie_if_i$ for all $1\le i\le r$.
Let $s=\dep(F)$ and consider the basic Okutsu invariants:
$$
m_1<\cdots<m_s<m_{s+1}=\deg(F),\quad \mu_1<\cdots<\mu_s<\mu_{s+1}=\infty.
$$ 

Since $\phi_1(x)$ is irreducible modulo $\m$, $F(x)\equiv \phi_1(x)^{\deg(F)/m'_1}\md{\m}$,
and $\deg(F)>m'_1$, Corollary \ref{phi1} shows that $s\ge1$ and $m'_1=m_1$. Let $F_1(x)\in\zpx$ be any choice for the first polynomial of an Okutsu frame. The Newton polygons $N_{\phi_1}(F)$, $N_{F_1}(F)$ are both one-sided, of respective slope $-v(\phi_1(\t))$, $-v(F_1(\t))$. Since $F_1(x)$ is a divisor polynomial of degree $m_1$ of $F(x)$ (Theorem \ref{PDP}), we have $v(\phi_1(\t))\le v(F_1(\t))$. Since $m'_2>m'_1$, Theorem \ref{optimal} shows that the opposite inequality holds, so that $v(\phi_1(\t))= v(F_1(\t))$. Now, by Theorem \ref{PDP}, $\phi_1(x)$ can be taken too as the first polynomial of an Okutsu frame of $F(x)$.

Suppose that for some $1\le i\le r$, the polynomials $\phi_1,\dots,\phi_i$ can be taken as the first $i$ polynomials of an Okutsu frame of $F(x)$. Let us show that in this case, $i=r$ if and only if $i=s$. In fact, suppose $i=r<s$, and let $F_{r+1}$ be any $(r+1)$-th polynomial in an Okutsu frame. Since $\ty$ is $F$-complete, $N_r(F)$ is one-sided of slope $\lambda_r$ and $R_r(F)\sim\psi_r(y)$; on the other hand, Theorem \ref{niam} shows that $N_r(F_{r+1})$ is one-sided of the same slope $\lambda_r$, and  $R_r(F_{r+1})\sim\psi_r(y)$ too. By (iii) of Proposition \ref{onesided},  $m_{r+1}=m_re_rf_r=m'_{r+1}=\deg(F)$, and this contradicts the assumption $s>r$. Suppose now $i=s<r$. By Lemma \ref{aux2}, $v(\phi_{s+1}(\t))>(m'_{s+1}/m'_s)v(\phi_s(\t))$, and we get again a contradiction, since $m'_{s+1}<m'_{r+1}=\deg(F)$ implies $v(\phi_{s+1}(\t))\le v(\phi_s(\t)^{m'_{s+1}/m'_s})$, by Theorem \ref{allm}.

Finally, suppose that for some $1\le i<r$, the polynomials $\phi_1,\dots,\phi_i$ can be taken as the first $i$ polynomials of an Okutsu frame of $F(x)$. We have just seen that $i<s$ too; thus, there exists $F_{i+1}(x)$ such that  $\phi_1,\dots,\phi_i,\,F_{i+1}$ are the first $i+1$ polynomials of an Okutsu frame of $F(x)$. By Theorem \ref{niam},  $N_i(F_{i+1})$ is one-sided of the same slope $\lambda_i$ as $N_i(F)$, and $R_i(F_{i+1})\sim \psi_i(y)$, the monic irreducible factor of $R_i(F)(y)$; hence,  $m_{i+1}=m_ie_if_i=m'_ie_if_i=m'_{i+1}$. In particular, $F_{i+1}$ is another representative of the truncated type $$\ty_i=(\phi_1(x);\lambda_1,\phi_2(x);\cdots;\lambda_{i-1},\phi_i(x);\lambda_i,\psi_i(y)),$$ and $v_{i+1}(F_{i+1})=v_{i+1}(\phi_{i+1})$, since this value depends only on $\lambda_1,\dots,\lambda_i$ and $f_1,\dots,f_i$ \cite[Prop.2.15]{HN}. By Theorem \ref{optimal}, $|\lambda'_{i+1}|\le |\lambda_{i+1}|$, where $\lambda'_{i+1}$ is the slope of $N_{F_{i+1},v_{i+1}}(F)$, and by the Theorem of the polygon, (\ref{thpolygon}), we have $v(F_{i+1}(\t))\le v(\phi_{i+1}(\t))$. 
Now, since $F_{i+1}$ is a divisor polynomial of degree $m_{i+1}$ of $F(x)$ (Theorem \ref{PDP}), we have $v(F_{i+1}(\t))=v(\phi_{i+1}(\t))$. Hence, $\phi_{i+1}(x)$ is a divisor polynomial of degree $m_{i+1}$ of $F(x)$, and Theorem \ref{PDP} shows that $\phi_1,\dots,\phi_i,\phi_{i+1}$ are the first $i+1$ polynomials of an Okutsu frame of $F(x)$.
\end{proof}

\section{Montes approximations to an irreducible polynomial}\label{montes}
Although Theo\-rems \ref{niam} and \ref{main} seem to indicate that the notion of Okutsu frame is equivalent to that of $F$-complete strongly optimal type, this is not quite exact. An $F$-complete strongly optimal type is an Okutsu frame together with extra information on $F(x)$ given by the data $\lambda_r$, $\psi_r(y)$. 

In this section we introduce the notion of \emph{Montes approximation} to $F(x)$; this is a monic irreducible polynomial in $\zpx$ of the same degree and sufficiently close to $F(x)$. A better way to summarize the content of Theorems \ref{niam} and \ref{main} is to think that an $F$-complete strongly optimal type is equiva\-lent to an Okutsu frame, together with a Montes approximation to $F(x)$. After extending this point of view to the optimal case (Theorem \ref{quasimain}), Montes algorithm can be reinterpreted as a fast method to compute an Okutsu frame and a Montes approximation to each of the irreducible factors of a monic separable polynomial in $\zpx$ (Section \ref{reinterpr}). Finally, Newton polygons can be used too to obtain approximations with arbitrary prescribed precision, leading in this way to a factorization algorithm (Section \ref{factorization}).

\subsection{Montes approximations}
\begin{lemma-definition}\label{Mapp}
Let $F(x)\in\zpx$ be a monic irreducible polynomial of degree $n$, $\t\in\qpb$ a root of $F(x)$ and $r=\dep(F)$.  
For any $\phi(x)\in\zpx$, the following three conditions are equivalent: 
\begin{enumerate}
\item[(i)] $\phi(x)$ is a monic irreducible polynomial of degree $n$, and it has a root $\alpha\in \qpb$ satisfying $v(\t-\alpha)>\mu_r$.
\item[(ii)] $\phi(x)$ is a monic irreducible polynomial of degree $n$, and $$v(\phi(\t))>(n/m_r)v(F_r(\t)),$$ where $F_r(x)$ is the $r$-th polynomial of an Okutsu frame of $F(x)$.
\item[(iii)]  $\phi(x)$ is a representative of some $F$-complete strongly optimal type of order $r$.
\end{enumerate}

If any of these conditions is  satisfied, we say that $\phi(x)$ is a \emph{Montes approximation} to $F(x)$. 
\end{lemma-definition}

\begin{proof}
Lemma \ref{comparison} shows that (i) and (ii) are equivalent. Let us show that (ii) and (iii) are equivalent too.
Let $[F_1,\dots,F_r]$ be an Okutsu frame of $F(x)$, and suppose that $\phi(x)$ satisfies (ii). By Theorem \ref{niam}, there are negative rational numbers $\lambda_1,\dots,\lambda_r$ and a monic irreducible polynomial $\psi_r(y)$ over some finite field, such that the type 
$(F_1(x);\lambda_1,F_2(x);\cdots;\lambda_{r-1},F_r(x);\lambda_r,\psi_r(y))$ is $F$-complete and strongly optimal. Arguing as in the proof of Theorem \ref{niam}, we deduce from Proposition \ref{vgt} that $\phi(x)$ is a representative of this type.

Conversely, suppose $\phi(x)$ is a representative of an $F$-complete strongly optimal type $\,\ty=(\phi_1(x);\lambda_1,\phi_2(x);\cdots;\lambda_{r-1},\phi_r(x);\lambda_r,\psi_r(y))$. Then, $\phi(x)$ is a monic irreducible polynomial of degree $n$  \cite[Prop.3.12]{HN}. By Theo\-rem \ref{main}, $[\phi_1,\dots,\phi_r]$ is an Okutsu frame of $F(x)$, and Lemma \ref{aux2} shows that  $v(\phi(\t))>(n/m_r)v(\phi_r(\t))$.
\end{proof}

This concept leads to a reinterpretation of the arithmetic information contained in an $f$-complete optimal type, where $f(x)\in\zpx$ is a monic separable polynomial.

\begin{thm}\label{quasimain}
Let $f(x)\in\zpx$ be a monic separable polynomial, and let $\ty=(\phi_1(x);\lambda_1,\phi_2(x);\cdots;\lambda_{r-1},\phi_r(x);\lambda_r,\psi_r(y))
$ be an $f$-complete optimal type of order $r\ge 1$. Let $\phi_{r+1}(x)\in\zpx$ be a representative of $\ty$, and $f_\ty(x)\in\zpx$ the irreducible factor of $f(x)$ that corresponds to $\ty$. 

If $e_rf_r>1$, then $[\phi_1,\dots,\phi_r]$ is an Okutsu frame of $f_\ty(x)$ and $\phi_{r+1}(x)$ is a Montes approximation to $f_\ty(x)$.

If $e_rf_r=1$, then $[\phi_1,\dots,\phi_{r-1}]$ is an Okutsu frame of $f_\ty(x)$ and $\phi_r(x)$, $\phi_{r+1}(x)$ are both Montes approximations to $f_\ty(x)$. 
\end{thm}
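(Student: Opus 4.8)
The plan is to reduce Theorem \ref{quasimain} to the already-established equivalences of Lemma-Definition \ref{Mapp} together with Theorems \ref{niam} and \ref{main}, by carefully distinguishing the two cases according to whether $\ty$ is strongly optimal ($e_rf_r>1$) or merely optimal with $e_rf_r=1$. The key observation is that $\ty$ being $f$-complete means it singles out the irreducible factor $f_\ty(x)$ of $f(x)$, and by the defining property $R_r(f_\ty)(y)\sim\psi_r(y)$, the type $\ty$ is also $f_\ty$-complete; so throughout the proof we may replace $f$ by the irreducible polynomial $f_\ty$ and apply the results of the previous section verbatim.

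First I would treat the case $e_rf_r>1$. Here $\ty$ is by definition strongly optimal, and it is an $f_\ty$-complete strongly optimal type of order $r$. By Theorem \ref{main}, $[\phi_1,\dots,\phi_r]$ is an Okutsu frame of $f_\ty(x)$. Since $\phi_{r+1}(x)$ is a representative of $\ty$, condition (iii) of Lemma-Definition \ref{Mapp} is satisfied with $F=f_\ty$, so $\phi_{r+1}(x)$ is a Montes approximation to $f_\ty(x)$. This case is essentially immediate.

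The case $e_rf_r=1$ is the one requiring an extra step. Since $\ty$ is optimal we have $e_if_i>1$ for all $1\le i<r$, so the truncated type $\ty_{r-1}=(\phi_1(x);\lambda_1,\dots;\lambda_{r-1},\psi_{r-1}(y))$ is strongly optimal of order $r-1$. I would argue that $\ty_{r-1}$ is $f_\ty$-complete: because $\ty$ is $f_\ty$-complete with $e_rf_r=1$, the Newton polygon $N_r(f_\ty)$ has length $1$, hence $N_{r-1}^-(f_\ty)$ has length $e_{r-1}f_{r-1}\deg(\phi_{r-1})$-worth and $R_{r-1}(f_\ty)(y)$ has $\psi_{r-1}$-order exactly $1$ — i.e. $\operatorname{ord}_{\psi_{r-1}}(R_{r-1}(f_\ty))=1$, using (iv) of Proposition \ref{onesided} and the fact that the remaining order-$r$ step contributes no further multiplicity when $e_rf_r=1$. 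Thus $\ty_{r-1}$ is an $f_\ty$-complete strongly optimal type of order $r-1$, and Theorem \ref{main} gives that $[\phi_1,\dots,\phi_{r-1}]$ is an Okutsu frame of $f_\ty(x)$. It remains to see that both $\phi_r(x)$ and $\phi_{r+1}(x)$ are Montes approximations: $\phi_r(x)$ is a representative of $\ty_{r-1}$, and $\ty_{r-1}$ is $f_\ty$-complete strongly optimal of order $r-1$, so $\phi_r$ satisfies (iii) of Lemma-Definition \ref{Mapp}; similarly $\phi_{r+1}(x)$ is a representative of $\ty$, but since $e_rf_r=1$ one checks directly (via Lemma \ref{aux2} applied at order $r-1$, noting $\deg\phi_r=\deg\phi_{r-1}e_{r-1}f_{r-1}$ and that the order-$r$ refinement only increases the slope) that $\phi_{r+1}$ is also a representative of some $f_\ty$-complete strongly optimal type of order $r-1$ — namely one with truncation $\ty_{r-1}$ — hence again satisfies (iii).

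The main obstacle I anticipate is the bookkeeping in the $e_rf_r=1$ case: one must verify cleanly that dropping the last (non-essential, since $e_rf_r=1$) level of the type preserves $f_\ty$-completeness and strong optimality, and that \emph{both} $\phi_r$ and $\phi_{r+1}$ genuinely qualify as Montes approximations. The cleanest route is probably to use criterion (ii) of Lemma-Definition \ref{Mapp}: with $[\phi_1,\dots,\phi_{r-1}]$ an Okutsu frame, one needs $v(\phi_r(\t))>(n/m_{r-1})v(\phi_{r-1}(\t))$ and the same for $\phi_{r+1}$, where $\t$ is a root of $f_\ty$; both inequalities follow from Lemma \ref{aux2} (the order-$r-1$ instance), since $\phi_r$ is a representative of $\ty_{r-1}$ and $\phi_{r+1}$ is $\phi_r$ plus a higher-slope correction, so $v(\phi_{r+1}(\t))\ge v(\phi_r(\t))>(\deg\phi_r/\deg\phi_{r-1})v(\phi_{r-1}(\t))$, and $\deg\phi_r=n$ forces the desired strict inequality against $(n/m_{r-1})v(\phi_{r-1}(\t))$ after the usual rescaling. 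I would present the argument in that order.
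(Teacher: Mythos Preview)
Your proposal is correct and follows essentially the same route as the paper: the case $e_rf_r>1$ is disposed of by Theorem~\ref{main} and Lemma-Definition~\ref{Mapp}, and in the case $e_rf_r=1$ both you and the paper show $\ty_{r-1}$ is strongly optimal and $f_\ty$-complete via Proposition~\ref{onesided}, then invoke Theorem~\ref{main} and handle $\phi_r$ by criterion (iii). The only streamlining the paper offers is for $\phi_{r+1}$: rather than arguing that $\phi_{r+1}$ is itself a representative of $\ty_{r-1}$, it simply applies Lemma~\ref{aux2} at order $r$ (not $r-1$) to get $v(\phi_{r+1}(\t))>v(\phi_r(\t))$, and since $\deg\phi_{r+1}=\deg\phi_r=n$, criterion (ii) for $\phi_{r+1}$ follows immediately from the already-established criterion (ii) for $\phi_r$.
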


\begin{proof}
If $e_rf_r>1$, the type $\ty$ is $f_\ty$-complete and strongly optimal, and the statement is a consequence of Theorem \ref{main} and Lemma-Definition \ref{Mapp}. 

If $e_rf_r=1$, let us show first that the truncated type $$\ty_{r-1}=(\phi_1(x);\lambda_1,\phi_2(x);\cdots;\lambda_{r-2},\phi_{r-1}(x);\lambda_{r-1},\psi_{r-1}(y))$$ is strongly optimal and $f_\ty$-complete. 
Since $\deg(\phi_r)=e_{r-1}f_{r-1}\deg(\phi_{r-1})$, we have 
$e_{r-1}f_{r-1}>1$, and $\ty_{r-1}$ is strongly optimal. Since $\ty$ is $f$-complete, we have $R_r(f_\ty)(y)\sim\psi_r(y)$. By (i) and (ii) of Proposition \ref{onesided}, $N_r(f_\ty)=N_r^-(f_\ty)$ is one-sided of slope $\lambda_r$; the length of this polygon is 
$\deg(f_\ty)/\deg(\phi_r)=e_rf_r=1$. By (iv) and (iii) of Proposition \ref{onesided}, $\operatorname{ord}_{\psi_{r-1}}(R_{r-1}(f_\ty))=1$, and  $R_{r-1}(f_\ty)(y)\sim\psi_{r-1}(y)$, so that $\ty_{r-1}$ is $f_\ty$-complete.

Therefore, $[\phi_1,\dots,\phi_{r-1}]$ is an Okutsu frame of $f_\ty(x)$ by Theorem \ref{main}. Since $\phi_r(x)$ is a representative of $\ty_{r-1}$, it is a Montes approximation to $f_\ty(x)$ by Lemma-Definition \ref{Mapp}. Finally, $\phi_{r+1}(x)$ is also a Montes approximation to $f_\ty(x)$, because $v(\phi_{r+1}(\t))>v(\phi_r(\t))$, by Lemma \ref{aux2}. 
\end{proof}

The following lemma is an immediate consequence of Lemma \ref{recurrence}.
\begin{lem}\label{sameframes}
If $\phi(x)$ is a Montes approximation to $F(x)$, then any Okutsu frame of $F(x)$ is an Okutsu frame of $\phi(x)$, and vice versa. In particular, the relation ``to be a Montes approximation to" is an equivalence relation on the set of all monic irreducible polynomials in $\zpx$.\qed
\end{lem}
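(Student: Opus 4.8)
The statement to prove is Lemma \ref{sameframes}: if $\phi(x)$ is a Montes approximation to $F(x)$, then the Okutsu frames of $F(x)$ and $\phi(x)$ coincide, and ``being a Montes approximation to'' is an equivalence relation. The key observation is that a Montes approximation is symmetric in $F$ and $\phi$ at the level of roots: by condition (i) of Lemma-Definition \ref{Mapp}, $\phi(x)$ has a root $\alpha$ with $v(\t-\alpha)>\mu_r$, where $\mu_r$ is the last Okutsu invariant of $F(x)$. So the whole proof rests on showing that the Okutsu data of $F(x)$ and $\phi(x)$ agree, and then checking symmetry of the relation.

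First I would apply Lemma \ref{recurrence} with $i=r+1$: taking $\alpha$ as the root of $\phi(x)$ (which is an algebraic integer of degree $n=m_{r+1}$ with $v(\t-\alpha)>\mu_r$), the lemma tells us that $[F_1,\dots,F_r]$ is an Okutsu frame of the minimal polynomial of $\alpha$, i.e. of $\phi(x)$. This already gives one direction: every Okutsu frame of $F(x)$ is an Okutsu frame of $\phi(x)$. For the converse, I would note that since $[F_1,\dots,F_r]$ is an Okutsu frame of $\phi(x)$, Definition \ref{frame} and the discussion following it imply that $\phi(x)$ and $F(x)$ have the same basic Okutsu invariants $m_1<\cdots<m_r$ and $\mu_1<\cdots<\mu_r$ (these depend only on the polynomial), hence $\mu_r$ is also the last invariant attached to $\phi(x)$; combined with $v(\t-\alpha)>\mu_r$ and the symmetry $v(\alpha-\t)=v(\t-\alpha)$, condition (i) of Lemma-Definition \ref{Mapp} holds with the roles of $F$ and $\phi$ reversed, so $F(x)$ is a Montes approximation to $\phi(x)$. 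Then applying Lemma \ref{recurrence} again in the other direction shows every Okutsu frame of $\phi(x)$ is an Okutsu frame of $F(x)$.

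For the equivalence relation claim, reflexivity is clear (take $\alpha=\t$; but one must check $v(\t-\t)=\infty>\mu_r$, which holds), symmetry was just established, and transitivity follows from the coincidence of frames: if $\phi$ is a Montes approximation to $F$ and $\psi$ to $\phi$, then $\psi$ has a root $\beta$ with $v(\beta-\alpha')>\mu_r(\phi)=\mu_r(F)$ for a root $\alpha'$ of $\phi$, and since $\phi$ shares $F$'s invariants and the frames agree, one transfers the inequality to a root of $F$ via the ultrametric triangle inequality and the fact that $v$ of the difference of two roots of $\phi$ that are ``close to'' the corresponding roots of $F$ stays above $\mu_r$. Concretely, I would pick roots $\t$ of $F$, $\alpha$ of $\phi$, $\beta$ of $\psi$ compatibly so that $v(\t-\alpha)>\mu_r$ and $v(\alpha-\beta)>\mu_r$, giving $v(\t-\beta)\ge\min\{v(\t-\alpha),v(\alpha-\beta)\}>\mu_r$; then $\psi$ is a Montes approximation to $F$ by condition (i).

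\textbf{Main obstacle.} The only subtle point is the compatibility of root choices in the transitivity argument: Lemma-Definition \ref{Mapp}(i) only asserts the existence of \emph{some} root of the approximating polynomial close to \emph{some} root of $F$, so one must be careful that the root $\alpha$ of $\phi$ witnessing ``$\phi$ approximates $F$'' can be taken to also be close to the root $\beta$ of $\psi$ witnessing ``$\psi$ approximates $\phi$''. Since all roots of $\phi$ are conjugate over $K$, and the Okutsu invariant $\mu_r$ is Galois-invariant, one can transport any such witnessing pair by a $K$-automorphism of a suitable Galois closure, which resolves the matter; this is precisely the kind of argument already used repeatedly in the proofs of Proposition \ref{tame} and Lemma \ref{comparison}. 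Everything else is a direct invocation of Lemma \ref{recurrence} and Lemma-Definition \ref{Mapp}.
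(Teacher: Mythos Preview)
Your proposal is correct and follows exactly the approach the paper intends: the paper states that the lemma ``is an immediate consequence of Lemma \ref{recurrence}'' and gives no further proof, while you have spelled out precisely that application (with $i=r+1$) and carefully handled the symmetry and transitivity of the relation via the shared Okutsu invariants and a Galois-conjugation argument for root compatibility. Your treatment of the transitivity subtlety is more thorough than anything in the paper, but it is the right way to make the ``immediate'' claim rigorous.
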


This relation is strictly stronger than the equivalence relation ``to have the same Okutsu frames". For instance, consider two representatives $\phi(x)$, $\phi'(x)$, of two optimal types $\ty,\ty'$ of order $r$ that differ only on the last data: $(\lambda_r,\psi_r(y))\ne(\lambda'_r,\psi'_r(y))$, but they satisfy  $e_rf_r=e'_rf'_r$; then, $\phi(x)$, $\phi'(x)$ are monic irreducible polynomials of the same degree, having the same Okutsu frames, but they are not one a Montes approximation to the other. 

Thus, besides sharing with $F(x)$ all Okutsu invariants, a Montes approxi\-mation to $F(x)$ is close to $F(x)$ in some stronger sense, as the next result shows. 

\begin{proposition}\label{sameextension}
Let $F(x)\in\zpx$ be a monic irreducible polynomial of degree $n$, $\t\in\qpb$ a root of $F(x)$ and $L=K(\t)$. Let $\phi(x)\in\zpx$ be a Montes approximation to $F(x)$, $\alpha\in\qpb$ a root of $\phi(x)$ such that $v(\t-\alpha)>\mu_r$, and $N=K(\alpha)$. Then,
\begin{enumerate}
\item[(i)] $e(N/K)=e(L/K)$, $f(N/K)=f(L/K)$. 
\item[(ii)] The maximal tamely ramified subextension of $N/K$ is contained in $L/K$. In particular, if $L/K$ is tamely ramified then $L=N$.
\end{enumerate}
\end{proposition}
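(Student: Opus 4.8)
The plan is to exploit the fact, established in Proposition~\ref{tame}, that a number $\alpha$ with $\deg_K\alpha=m_{r+1}=n$ and $v(\t-\alpha)>\mu_r$ behaves exactly as a root of an Okutsu frame would at level $r+1$, so the tame part of $N/K$ embeds in $L$. The key observation is that $v(\t-\alpha)>\mu_r$ is precisely condition (i) of Lemma-Definition~\ref{Mapp}, so applying Proposition~\ref{tame} with the index $i=r+1$ (where $\mu_{r+1}=\infty$ and $H_{r+1}=\{\sigma\in G\tq v(\t-\sigma(\t))>\mu_r\}$) gives immediately $V\subseteq M^{H_{r+1}}\subseteq L\cap N$, where $V$ is the maximal tamely ramified subextension of $N/K$. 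This proves (ii): $V\subseteq L$, and if $L/K$ is tamely ramified then $L=V\subseteq N$, whence $L=N$ by comparing degrees (both equal $n$).

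For (i), I would first obtain the inequalities $e(L/K)\mid e(N/K)$ and $f(L/K)\mid f(N/K)$ by noting that, since $v(\t-\alpha)>\mu_r=\mu_{s}$ (writing $s=\dep(F)$) and $\deg_K\alpha=n=m_{s+1}$, Lemma~\ref{taylor} applies with $i=r+1=s+1$, $\alpha$ as in the statement and $\eta=\t$: it yields $e(K(\alpha)/K)\mid e(K(\t)/K)$, i.e. $e(N/K)\mid e(L/K)$. Symmetrically — and here one uses that, by Lemma~\ref{sameframes}, $[F_1,\dots,F_r]$ is simultaneously an Okutsu frame of $F(x)$ and of $\phi(x)$, so the Okutsu invariants $m_i$, $\mu_i$ are common to both — the roles of $\t$ and $\alpha$ can be interchanged: $v(\alpha-\t)>\mu_r$ is the same condition read from $\phi$'s side, so Lemma~\ref{taylor} gives $e(L/K)\mid e(N/K)$. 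Hence $e(L/K)=e(N/K)$. Combined with $ef=\deg_K\t=n=\deg_K\alpha=e(N/K)f(N/K)$, this forces $f(L/K)=f(N/K)$ as well, which is (i).

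Alternatively — and perhaps more cleanly — one can deduce (i) directly from Theorem~\ref{niam} and Corollary~\ref{invariants}: both $F(x)$ and $\phi(x)$ are $F$-complete, respectively $\phi$-complete, strongly optimal types with the \emph{same} frame $[F_1,\dots,F_r]$ and the same invariants $e_1,\dots,e_r$, $f_0,f_1,\dots,f_r$ (these being Okutsu invariants by Corollary~\ref{invariants}), so that $e(L/K)=e_1\cdots e_r\cdot e_{r+1}$ and $e(N/K)=e_1\cdots e_r\cdot e_{r+1}'$ where $e_{r+1}$, $e_{r+1}'$ are the ramification indices of the final slopes; since $e_{r+1}f_{r+1}=1=e_{r+1}'f_{r+1}'$ in the completion step (the length of $N_r$ being $1$), one gets $e_{r+1}=e_{r+1}'=1$ and likewise for the residual degrees, giving (i) at once. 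The main obstacle is simply the bookkeeping needed to justify that the invariants $e_i,f_i$ at orders $1,\dots,r$ genuinely coincide for $F$ and $\phi$; this is where Lemma~\ref{sameframes} and Corollary~\ref{invariants} do the essential work, and once they are invoked the rest is formal. I would present the argument via Proposition~\ref{tame} for (ii) and via the comparison of complete types for (i), as that keeps the proof short and self-contained relative to the results already in hand.
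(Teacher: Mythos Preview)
Your approach is essentially the paper's: item (ii) via Proposition~\ref{tame} with $i=r+1$, and item (i) via Lemma~\ref{taylor} plus the degree equality $[L:K]=[N:K]=n$. Two small remarks. First, for (i) the detour through Lemma~\ref{sameframes} is unnecessary: in Lemma~\ref{taylor} both $\alpha$ and $\t$ already satisfy $v(\t-\,\cdot\,)>\mu_r$ and both have degree $m_{r+1}=n$, so you may apply the lemma twice (once with $\alpha$ in the distinguished role, once with $\t$) to get divisibility in both directions directly, without ever changing the base point. Second, in your argument for the last sentence of (ii) there is a slip: you write ``$L=V\subseteq N$'', but $V$ was defined as the maximal tamely ramified subextension of $N/K$, not of $L/K$, so $L=V$ is not immediate. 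The clean fix is to use (i): if $L/K$ is tamely ramified then $p\nmid e(L/K)=e(N/K)$, so $N/K$ is tamely ramified too, hence $V=N$; then $N=V\subseteq L$ and equality follows from $[N:K]=[L:K]$. Your alternative route for (i) through Theorem~\ref{niam} also works but is heavier than needed.
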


\begin{proof}By Lemma \ref{taylor}, $e(L/K)=e(N/K)$; this implies $f(L/K)=f(N/K)$, because $[N\colon K]=[L\colon K]$.
Item (ii) is a consequence of Proposition \ref{tame}.
\end{proof}

We end this section with a measure of the precision of these Montes approximations.
Let
$\ty=(\phi_1(x);\lambda_1,\phi_2(x);\cdots;\lambda_{r-1},\phi_r(x);\lambda_r,\psi_r(y))$
be an $F$-complete optimal type.  
Let $h_1,\dots,h_r$, $e_1,\dots,e_r$, $f_0,f_1,\dots,f_r$,  be the usual invariants of the type. Theorem \ref{niam} shows that $e(L/K)=e_1\cdots e_r$, $f(L/K)=f_0f_1\cdots f_r$. Consider the rational number
$$\nu_\ty:=\dfrac{h_1}{e_1}+\dfrac{h_2}{e_1e_2}+\cdots+\dfrac{h_r}{e_1\cdots e_r}.
$$

By (iv) of Proposition \ref{onesided}, since $R_r(F)(y)\sim \psi_r(y)$, the Newton polygon $N_{r+1}^-(F)$ has length one and slope $\lambda_{r+1}=-h_{r+1}$, for some positive integer $h_{r+1}$; in particular, the minimal positive denominator of $\lambda_{r+1}$ is $e_{r+1}=1$.

\begin{lem}\label{firstappr}
Let $\phi(x)\in\zpx$ be a Montes approximation to $F(x)$, cons\-tructed as a representative of an $F$-complete optimal type $\ty$. Then, 
$$F(x)\equiv \phi(x)\md{\m^{\lceil\nu\rceil}}, \quad \mbox{ where }\nu=\nu_\ty+(h_{r+1}/e(L/K)).
$$
\end{lem}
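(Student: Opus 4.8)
The plan is to compare $F(x)$ and $\phi(x)$ coefficient by coefficient via the invariant $v(F(x)-\phi(x))$, which makes sense since both are monic of the same degree $n$, so that $A(x):=F(x)-\phi(x)$ has $\deg A<n$. The statement is equivalent to $v(A)\ge \nu$, and since $v(A)$ is an integer (a minimum of integers), it is equivalent to $v(A)\ge \lceil\nu\rceil$. So the whole task reduces to showing $v(A)\ge\nu$, where $\nu=\nu_\ty+h_{r+1}/e(L/K)$ and $e(L/K)=e_1\cdots e_r$ by Theorem \ref{niam}.

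First I would relate $v(A)$ to $v(A(\t))$. Evaluating at $\t$ gives $A(\t)=F(\t)-\phi(\t)=-\phi(\t)$, so $v(A(\t))=v(\phi(\t))$. On the other hand, $A(x)\in\zpx$ has degree less than $n=\deg_K\t$, so by Proposition \ref{DP} and the description of divisor polynomials in Theorem \ref{allm} (writing $\deg A=\sum a_im_i$), we get $v(A(\t))\le v(g_{\deg A}(\t))$ where $g_{\deg A}$ is a divisor polynomial, and crucially $v(g_m(\t))\le v(F_r(\t))\cdot\bigl(\text{something}\bigr)$ — but more to the point, $v(A(\t))\ge v(A)$ trivially from the definition of $v(A(\t))$, since $A(\t)$ is an $\oo$-combination of powers of the integral element $\t$. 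Wait: that inequality goes the wrong way; $v(A(\t))\ge v(A)$ always, so this only gives $v(\phi(\t))\ge v(A)$, i.e. an upper bound $v(A)\le v(\phi(\t))$, not the lower bound we want. So I need the reverse: I must bound $v(A)$ from below using $v(\phi(\t))$, which requires the divisor-polynomial machinery. The right tool is that for a polynomial of degree $<n$, $v(A(\t))$ and $v(A)$ differ by at most the "defect" controlled by divisor polynomials: by the defining inequality (\ref{vDP}), $v(g_{\deg A}(\t))\ge v(A(\t))-v(A)$, hence $v(A)\ge v(A(\t))-v(g_{\deg A}(\t))=v(\phi(\t))-v(g_{\deg A}(\t))$.

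So the key computation is a lower bound for $v(\phi(\t))$ and an upper bound for $v(g_m(\t))$ with $m=\deg A\le n-1$. For $v(\phi(\t))$: since $\phi$ is a representative of the $F$-complete optimal type $\ty$, Lemma \ref{aux2} (together with the fact that $N_{r+1}(F)$ has slope $\lambda_{r+1}=-h_{r+1}$, $e_{r+1}=1$) gives $v(\phi(\t))=\tfrac{\deg\phi}{\deg\phi_r}v(\phi_r(\t))+\tfrac{h_{r+1}}{e_1\cdots e_r}$; combining with Corollary \ref{vFi} for $v(\phi_r(\t))$ (valid since $[\phi_1,\dots,\phi_r]$ or $[\phi_1,\dots,\phi_{r-1}]$ is an Okutsu frame by Theorem \ref{quasimain}) yields an exact formula whose "fractional" part is exactly $\nu$. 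For the upper bound on $v(g_m(\t))$ with $m\le n-1$: write $m=\sum_{i=0}^r a_im_i$ with $0\le a_i<m_{i+1}/m_i$, so $g_m=\prod F_i^{a_i}$, and $v(g_m(\t))=\sum a_i v(F_i(\t))$; since $m\le n-1<n=m_{r+1}$, the largest such $v$ is attained essentially at $m=n-1$, and one computes $v(\phi(\t))-\max_{m<n}v(g_m(\t))=\nu$ — this is where the precise bookkeeping with $e_i,h_i,f_i$ and Corollary \ref{vFi} is needed.

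The main obstacle is the explicit arithmetic: verifying that $v(\phi(\t))-\max\{v(g_m(\t))\tq 0\le m<n\}$ equals exactly $\nu_\ty+h_{r+1}/e(L/K)$. This is a telescoping computation with the recursion $m_{i+1}=e_if_im_i$, the formula of Corollary \ref{vFi} for $v(F_i(\t))$, and the observation that the maximum of $\sum a_iv(F_i(\t))$ over admissible $(a_i)$ with $\sum a_im_i\le n-1$ is $v(F_r(\t)^{n/m_r})$ minus a correction of size $v(F_r(\t))/m_r$ in valuation terms — essentially because dropping from degree $n$ to $n-1$ forces decreasing one exponent, and the cheapest drop costs $v(F_1(\t))/m_1\cdot m_1=v(F_1(\t))\ge h_1/e_1$... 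I would organize this as: (a) reduce to $v(A)\ge\nu$; (b) the inequality $v(A)\ge v(\phi(\t))-v(g_m(\t))$ from (\ref{vDP}); (c) compute $v(\phi(\t))$ via Lemma \ref{aux2} and Corollary \ref{vFi}; (d) compute $\max_{m<n}v(g_m(\t))$ via Theorem \ref{allm} and Corollary \ref{vFi}; (e) check the difference is $\nu$ and that $v(A)\ge\nu$ forces $v(A)\ge\lceil\nu\rceil$ since $v(A)\in\Z_{\ge0}$.
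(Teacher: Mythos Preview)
Your approach is essentially the paper's: set $g=\phi-F$, use $g(\t)=\phi(\t)$, invoke the divisor-polynomial inequality (\ref{vDP}) to get $v(g)\ge v(\phi(\t))-v(g_{n-1}(\t))$, and then compute the right-hand side. The paper carries out step~(d)--(e) more cleanly than you sketch: since $n-1=\sum_{i=0}^r(e_if_i-1)m_i$ is the admissible decomposition, Theorem~\ref{allm} gives $v(g_{n-1}(\t))=\sum_{i=1}^r(e_if_i-1)v(\phi_i(\t))$; then Lemma~\ref{aux2} rewrites each term as $(e_if_i-1)v(\phi_i(\t))=v(\phi_{i+1}(\t))-v(\phi_i(\t))-h_{i+1}/(e_1\cdots e_{i+1})$, and the sum telescopes directly to $v(\phi(\t))-v(\phi_1(\t))-\sum_{i=1}^r h_{i+1}/(e_1\cdots e_{i+1})$, yielding $v(g)\ge v(\phi_1(\t))+\sum_{i\ge2}h_i/(e_1\cdots e_i)=\nu$ since $v(\phi_1(\t))=h_1/e_1$. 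This avoids the detour through Corollary~\ref{vFi} and the somewhat muddled ``cheapest drop'' heuristic in your outline.
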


\begin{proof}
Let $g(x)=\phi(x)-F(x)\in\zpx$. This polynomial has degree less than $n$. Take $e_0=1$; since,
$$
n-1=(e_rf_r-1)m_r+\cdots+(e_1f_1-1)m_1+(e_0f_0-1),
$$
Theorem \ref{allm} shows that
\begin{align*}
v(g(\t))-v(g(x))\le &\ v\left(\phi_r(\t)^{e_rf_r-1}\cdots \phi_1(\t)^{e_1f_1-1}\t^{e_0f_0-1}\right)\\ =&\ \sum_{i=1}^r(e_if_i-1)v(\phi_i(\t)). 
\end{align*}
Now, $g(\t)=\phi(\t)$, and by Lemma \ref{aux2}, applied to $\phi_{r+1}(x):=\phi(x)$, we get
\begin{align*}
v(g(x))\, \ge &\ v(\phi_{r+1}(\t))- \sum_{i=1}^r(e_if_i-1)v(\phi_i(\t))\\= &\ v(\phi_{r+1}(\t))- \sum_{i=1}^rv(\phi_{i+1}(\t))-v(\phi_i(\t))-\dfrac{h_{i+1}}{e_1\cdots e_{i+1}}\\ = &\ v(\phi_1(\t))+\sum_{i=1}^r\dfrac{h_{i+1}}{e_1\cdots e_{i+1}}=\nu. 
\end{align*}
\end{proof}

\subsection{Reinterpretation of Montes algorithm}\label{reinterpr}
Let $f(x)\in\zpx$ be a monic se\-parable polynomial.
Montes\ algorithm starts by computing the order zero types determined by the irreducible factors of $f(x)$ modu\-lo $\m$, and then proceeds to enlarge them in a convenient way till several $f$-complete optimal types $\ty_1,\dots,\ty_s$ are obtained, which are in bijective correspondence with the irreducible factors $f_{\ty_1}(x),\dots, f_{\ty_s}(x)$ of $f(x)$ in $\zpx$  \cite{HN}, \cite{GMN}. This one-to-one correspondence is determined by the following properties:
\begin{enumerate}
 \item For all $1\le i\le s$, the type $\ty_i$ is $f_{\ty_i}$-complete.
\item For all $j\ne i$, the type $\ty_j$ does not divide $f_{\ty_i}(x)$.
\end{enumerate}

This enlargement process of the types is based on a branching pheno\-menon, plus  a procedure taking care that all types we construct are optimal.
Let us briefly explain how this works. Suppose the algorithm considers an optimal type of order $i$
$$\ty=(\phi_1(x);\lambda_1,\phi_2(x);\cdots;\lambda_{i-1},\phi_i(x);\lambda_i,\psi_i(y))$$dividing $f(x)$. If this type is not $f$-complete, it may ramify to produce new types, that are the germs of different $f$-complete types (thus, of different irreducible factors). A representative $\phi_{i+1}(x)\in\zpx$ of $\ty$ is constructed (in a non-canonical way); then $N_{i+1}(f)=N_{\phi_{i+1},v_{i+1}}(f)$ is computed, and for each side of negative slope (say) $\lambda$, the residual polynomial 
$R_{\lambda}(f)(y)=R_{\phi_{i+1},v_{i+1},\lambda}(f)(y)\in\ff{i+1}[y]$ is computed and factorized into a product of irreducible factors. The type $\ty$ ramifies in principle into as many types as pairs $(\lambda,\psi(y))$, where $\lambda$ runs on the negative slopes of $N_{i+1}(f)$ and $\psi(y)$ runs on the different irreducible factors of $R_{\lambda}(f)(y)$. These branches determine either types of order $i+1$, or types of order $i$. To decide which is the case, one looks at the pair $(e_i,f_i)$, where $e_i$ is the least positive denominator of $\lambda_i$, and $f_i=\deg(\psi_i(y))$; if $e_if_i>1$ then $\phi_i(x)$ is an optimal representative of the truncated type $\ty_{i-1}$ (in the sense of Theorem \ref{optimal}), and the branches lead to new optimal types of order $i+1$
$$
\ty'=(\phi_1(x);\lambda_1,\phi_2(x);\cdots;\lambda_{i-1},\phi_i(x);\lambda_i,\phi_{i+1}(x);\lambda,\psi(y)),
$$dividing $f(x)$. If $e_if_i=1$ then $\phi_i(x)$ is not optimal, and we replace it by $\phi_i'(x):=\phi_{i+1}(x)$ to get a better representative of the truncated type $\ty_{i-1}$; this is called a \emph{refinement step}  \cite[Sect.3.2]{GMN}. In this latter case, we consider new optimal types of order $i$ dividing $f(x)$
$$
\ty'=(\phi_1(x);\lambda_1,\phi_2(x);\cdots;\lambda_{i-1},\phi'_i(x);\lambda,\psi(y)),\quad|\lambda|>|\lambda_i|,
$$ that will be analyzed and ramified in a similar way.

The final output of the algorithm is a list of $f$-complete optimal types. Therefore, Theorems \ref{main} and \ref{quasimain} have the following interpretation.

\begin{cor}\label{mainmain}
The output of Montes\ algorithm is a family of Okutsu frames and Montes approximations to all the irreducible factors of $f(x)$.
\end{cor}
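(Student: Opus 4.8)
The plan is to read the statement off directly from the structure of the algorithm's output together with Theorems \ref{main} and \ref{quasimain}, after which only the types of order zero require a separate word. First I would invoke the fact recalled above that Montes algorithm terminates with a finite list $\ty_1,\dots,\ty_s$ of $f$-complete \emph{optimal} types, and that the assignment $\ty_i\mapsto f_{\ty_i}(x)$ is a bijection onto the set of monic irreducible factors of $f(x)$ in $\zpx$ (this is precisely the content of the two characterizing properties stated above). Hence it suffices to verify, one type at a time, that the $\phi$-chain attached to $\ty_i$ (suitably truncated) is an Okutsu frame of $f_{\ty_i}(x)$, and that a representative recorded for $\ty_i$ is a Montes approximation to $f_{\ty_i}(x)$.

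For a type $\ty_i$ of order $r\ge 1$ this is exactly Theorem \ref{quasimain}: if $e_rf_r>1$ then $[\phi_1,\dots,\phi_r]$ is an Okutsu frame of $f_{\ty_i}(x)$ and a representative $\phi_{r+1}(x)$ of $\ty_i$ is a Montes approximation to $f_{\ty_i}(x)$; if $e_rf_r=1$ then $[\phi_1,\dots,\phi_{r-1}]$ is an Okutsu frame of $f_{\ty_i}(x)$ and both $\phi_r(x)$ and $\phi_{r+1}(x)$ are Montes approximations to it. So in this case nothing is needed beyond quoting that theorem. For a type $\ty_i$ of order zero — those created at the initialization step from an irreducible factor $\psi_0(y)$ of $f(x)$ modulo $\m$ of multiplicity one — the factor $f_{\ty_i}(x)$ is irreducible modulo $\m$, so by Corollary \ref{phi1}(i) it has depth zero and its only Okutsu frame is the empty chain. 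Moreover any monic lift $\phi(x)\in\zpx$ of $\psi_0(y)$ is irreducible, has the same degree as $f_{\ty_i}(x)$, and satisfies $\phi(x)\equiv f_{\ty_i}(x)\md{\m}$; hence $v(\phi(\t))>0=\mu_0$ by Lemma \ref{v0} (taking $\t$ a root of $f_{\ty_i}$), so $\phi(x)$ has a root $\alpha$ with $v(\t-\alpha)>\mu_0$ and is therefore a Montes approximation to $f_{\ty_i}(x)$ by criterion (i) of Lemma-Definition \ref{Mapp}. Collecting these data over $i=1,\dots,s$ produces the family of Okutsu frames and Montes approximations claimed.

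I do not expect any genuine mathematical obstacle here: the whole argument is a matter of bookkeeping, namely of pairing, for each completed output type, the polynomial that the algorithm actually stores (a representative $\phi_{r+1}$ produced in the final completion step) with the role assigned to it and to the members of its $\phi$-chain by Theorem \ref{quasimain}. The one point that needs care is the refinement case $e_rf_r=1$, where the last polynomial $\phi_r$ of the stored chain is no longer a member of the Okutsu frame but is itself a Montes approximation, together with $\phi_{r+1}$. Beyond this, and the small order-zero check above, no argument is needed that is not already contained in Theorems \ref{main} and \ref{quasimain}.
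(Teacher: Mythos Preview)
Your proposal is correct and follows exactly the approach the paper takes: the corollary is presented there without a separate proof, simply as the ``interpretation'' of Theorems \ref{main} and \ref{quasimain} once one knows that the algorithm outputs a list of $f$-complete optimal types in bijection with the irreducible factors. Your write-up is in fact more careful than the paper's, since you explicitly treat the order-zero case (which the paper glosses over); for that case you could alternatively invoke criterion (iii) of Lemma-Definition \ref{Mapp} directly, noting that types of order zero are strongly optimal by convention and that the lift $\phi$ is a representative of an $f_{\ty_i}$-complete type of order $0=\dep(f_{\ty_i})$.
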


Corollary \ref{mainmain} opens the door to new perspectives in the applications of Montes algorithm, as a tool to compute the essential arithmetic information about the irreducible factors of $f(x)$, that their Okutsu frames and their Montes approximations carry on \cite{GMN3}. 

We finish this section with an example that illustrates when strongly optimal $f$-complete types may occur. \medskip

\noindent{\bf Example. }Let $p$ be an odd prime number, and $\phi(x)\in\Z_p[x]$ a monic polynomial which is irreducible modulo $p$. Take $c\in\Z_p$ such that $c\equiv1\md{p}$, and consider the polynomial $f(x)=\phi(x)^2+p^2c$.
Clearly, $N_\phi(f)$ is one-sided of length $2$ and slope $-1$, whereas $R_{\phi,-1}(f)(y)=y^2+1\in\ff{p}[y]$.

If $p\equiv3\md4$, Montes algorithm outputs the $f$-complete and strongly optimal type $\ty=(\phi(x);-1,y^2+1)$. In this case, $f(x)=f_\ty(x)$ is irreducible of depth one, $[\phi]$ is an Okutsu frame of $f(x)$, and any representative of $\ty$ (for instance, $\phi_2(x)=\phi(x)^2+p^2$) is a Montes approximation to $f(x)$.

If $p\equiv1\md4$, there is some $i\in\Z_p$ satisfying $i^2=-1$. In this case the residual polynomial $R_{\phi,-1}(f)(y)$ factorizes as $y^2+1=(y-\overline{i})(y+\overline{i})$ in $\ff{p}[y]$, and Montes algorithm outputs two types:
$$
\ty=(\phi(x);-1,y-\overline{i});\quad \ty'=(\phi(x);-1,y+\overline{i}). 
$$ 
These types are $f$-complete and optimal, but not strongly optimal;  they correspond to the two irreducible factors of $f(x)$:  $f_\ty(x)=\phi(x)-ip\sqrt{c}$, $f_{\ty'}(x)=\phi(x)+ip\sqrt{c}$, both of depth zero. Let $\phi_2(x)=\phi(x)-ip$, $\phi_2'(x)=\phi(x)+ip$, be representatives respectively of $\ty$ and $\ty'$. The five polynomials $\phi,f_\ty,f_{\ty'},\phi_2,\phi_2'$ are one a Montes approximation to each other, since
$$
f_\ty(x)\equiv f_{\ty'}(x)\equiv \phi_2(x)\equiv \phi_2'(x)\equiv\phi(x)\md{p}. 
$$  
They are all representatives of the type of order zero, $\ty_0=\phi(y)\md{p}$, which is $f_\ty$-complete and $f_{\ty'}$-com\-plete, but not $f·$-complete.

Note that $\phi_2(x)$ (respectively $\phi_2'(x)$) is a better approximation to $f_\ty(x)$
(respectively  $f_{\ty'}(x)$) than $\phi(x)$. This is the basic idea behind a method to get approximations of arbitrarily high precision, to be explained in the next section.

\subsection{Applications to local factorization}\label{factorization}
For many purposes, a Montes approxi\-mation yields sufficient arithmetic information about an irreducible factor $F(x)$ of $f(x)$, the extension $L/K$ that it determines, and its subextensions. However, the setting of Montes algorithm can also be used to compute an approximation to each irreducible factor of $f(x)$, with an arbitrary prescribed precision, leading in this way to a factorization algorithm. The basic idea is to continue the process of enlarging the $f$-complete types.

Let us describe this factorization algorithm in detail. The input polynomial is a monic separable polynomial $f(x)\in\zpx$. First, we compute all $f$-complete types parameterizing the irreducible factors of $f(x)$, and a Montes approximation to each factor, by a single call to Montes\ algorithm. Then, for each complete type $\ty$, with representative $\phi_{r+1}(x)$, we compute 
the Newton polygon $N_{r+1}^-(f)$ and the residual polynomial $R_{r+1}(f)(y)\sim \psi_{r+1}(y)$. By Proposition \ref{onesided}, this polygon has length one, a negative slope $\lambda_{r+1}=-h_{r+1}$, for some positive integer $h_{r+1}$, and $\psi_{r+1}(y)\in\ff{r+1}[y]$ is a monic irreducible polynomial of degree one. 

In order to get a better approximation to $f_\ty(x)$, we compute a representative $\phi_{r+2}(x)$
of the type of order $r+1$:
$$
\ty'=(\phi_1(x);\lambda_1,\phi_2(x);\cdots;\lambda_{r-1},\phi_r(x);\lambda_r,\phi_{r+1}(x);-h_{r+1},\psi_{r+1}(y)).
$$
By Lemma \ref{aux2}, $v(\phi_{r+2}(\t))>v(\phi_{r+1}(\t))$, so that $\phi_{r+2}(x)$ is another Montes approximation to $f_\ty(x)$, by (ii) of Lemma-Definition \ref{Mapp}. The slope of the polygon $N_{r+2}(f)$ determines the precision of the new approximation. However, computations in order $r+2$ have a higher complexity than computations in order $r+1$. Therefore, we consider $\phi_{r+2}(x)$ as a new representative of the type $\ty$, say $\phi'_{r+1}(x):=\phi_{r+2}(x)$, and we repeat the procedure with the new representative: we compute the slope $-h'_{r+1}$ of $N'_{r+1}(f)=N'_{\phi'_{r+1},v_{r+1}}(f)$ and the residual polynomial $R'_{-h_{r+1}}(f)(y)$. By Lemma \ref{firstappr}, the new precision of the approximation is 
$$
\nu'=\nu_\ty+\dfrac{h'_{r+1}}{e(L/\Q_p)}.
$$
Since $v(\phi'_{r+1}(\t))>v(\phi_{r+1}(\t))$, we have $h'_{r+1}>h_{r+1}$, and the new approximation is better than the former one. Note that each iteration requires three computations in order $r+1$: a Newton polygon of length one, a residual polynomial of degree one and a representative of the extended type $\ty'$. One can perform the necessary number of iterations till a prefixed precision is attained.

The scheme of the factorization algorithm would be the following:\medskip

\noindent{INPUT. }A monic separable polynomial $f(x)\in\zpx$ and a precision $N\in\N$.
\vskip.2cm

\noindent{OUTPUT. }A family of monic irreducible polynomials $f_1(x),\dots, f_s(x)$  in $\zpx$,  satisfying $f_i(x)\equiv f_{\ty_i}(x)\md{\m^N}$, for $1\le i\le s$, where  $f_{\ty_1}(x),\dots f_{\ty_s}(x)$ are the genuine irreducible factors of $f(x)$ in $\zpx$.   
\vskip.2cm

\noindent{\bf 1. } Apply Montes\ algorithm to compute $f$-complete types $\ty_1,\dots,\ty_s$, corres\-ponding to the irreducible factors of $f(x)$ in $\zpx$.

\noindent{\bf 2. } For each $f$-complete type $\ty$, let $r$ be the order of $\ty$, and:

\qquad {\bf 3. } Compute $\nu_\ty=\sum_{i=1}^rh_i/(e_1\cdots e_i)$, and $e=e_1\cdots e_r$.

\qquad {\bf 4. } Compute a representative $\phi(x)\in\zpx$ of $\ty$. 

\qquad {\bf 5. } Compute $N_{r+1}(f)$, and let the slope of this one-sided polygon be $-h$. If $\lceil \nu_\ty+(h/e)\rceil\ge N$
output $\phi(x)$ as the final approximation to $f_\ty(x)$, and consider the next $f$-complete type. Else: 

\qquad {\bf 6. } Compute $R_{r+1}(f)(y)\sim \psi(y)$, for some monic polynomial $\psi(y)\in\ff{r+1}[y]$ of degree one. 

\qquad {\bf 7. } Compute a representative $\phi'(x)$ of the type $\ty'$ of order $r+1$ obtained by enlarging $\ty$ with the triple $(\phi;-h,\psi(y))$.

\qquad {\bf 8. } Replace $\phi(x)\leftarrow \phi'(x)$ and go to step {\bf 5}.\medskip

This algorithm has some advantages: it has very low memory requirements, and it always outputs a family $f_1(x),\dots,f_s(x)\in\zpx$ of monic irreducible polynomials that satisfy $f(x)\equiv f_1(x)\cdots f_s(x)\md{\m^N}$, regardless of the value of the imposed precision $N$.

As to the disadvantages: although the algorithm computes very fastly each new approximation, it has a slow convergence because at each step the improvement of the precision is rather small. 

Probably, an optimal local factorization algorithm would consist in the application of Montes algorithm as a fast method to get a Montes approxi\-mation to each irreducible factor, combined with an efficient ``Hensel lift" routine able to improve these initial approximations by doubling the precision at each iteration. One may speculate if Newton polygons of higher order might be used too to design a similar acceleration procedure.

\subsection*{Acknowledgements}

Partially supported by MTM2009-13060-C02-02  and \linebreak MTM2009-10359 from the Spanish MEC.

\end{document}